%% file: Oorts_conjecture.tex
\documentclass{amsart} 

\input{preamble}

\begin{document}
\title{Oort's conjecture for split unitary Shimura varieties}

\author{Paul Philippe, Fabian Schnelle, Eva Viehmann}
\address{Universit\"at M\"unster, Einsteinstr. 62, 48149 M\"unster, Germany}

\begin{abstract}
We prove that generically on the basic stratum of split unitary Shimura varieties, the universal abelian variety with endomorphism structure and polarization has minimal automorphism group in a precise sense, except for a few degenerate cases. This is a direct analogue of Oort's conjecture on automorphisms of supersingular abelian varieties. On the way, we explicitly compute the generic automorphism group of the universal $p$-divisible group without extra structure in a given basic isogeny class.
\end{abstract}

\maketitle
\tableofcontents
\section{Introduction}
\subsection{Oort's conjecture}
Fix a prime $p$, an integer $g \geq 1$ and an integer $N \geq 3$ which is coprime to $p$. Let $\ccA_g$ denote the Siegel modular variety at level $N$, which is the Shimura variety associated to the reductive group $\GSp_{2g}$ and which parametrizes principally polarized abelian varieties of dimension $g$ with level $N$-structure. Decomposing it according to the isogeny class of the polarized $p$-divisible group of the universal polarized abelian variety induces the so-called Newton stratification into finitely many locally closed subschemes. 

It is a well-known fact that the automorphism group of a principally polarized abelian variety is finite and contains $\{\pm 1\}$. Thus it is a natural question to ask for which points of $\ccA_g$ the automorphism group of the universal principally polarized abelian variety consists precisely of $\{\pm 1\}$. In \cite{chaioort2011monodromy}, Chai and Oort prove that this is generically true on each individual Newton stratum of $\cA_g$, except for the unique closed Newton stratum $\cA_{g,b_0}$, which is also called the supersingular stratum. Already in 2001, Oort conjectured in \cite[Problem 4]{edixhovenmoonenoort2001problems} that for all $g\geq 2$, the analogous assertion also holds for the supersingular stratum. This has become known as Oort's conjecture on automorphisms of generic supersingular abelian varieties.

Many people have worked on this conjecture, starting with Ibukiyama \cite{ibukiyama2020polarizations} and Karemaker and Pries \cite{karemakerpries2019fullymaximal} for $g=2$, where the conjecture is false for $p=2$ but true otherwise. Various particular cases, for small $g$ or large $p$, have then been proved in articles by Karemaker, Yobuko and Yu \cite{karemakeryobukoyu2021massformula}, Karemaker and Yu \cite{karemakeryu2025supersingular}, and Dragutinovi\'c \cite{dragutinovic2024oort}. The general case has been settled by the third author.

\begin{theorem*}[\cite{viehmann2026oort}, Thm.~1.1]
    Let $(g,p)\neq (2,2),(3,2)$. Then there is a dense open subscheme $Y_{b_0}$ of the supersingular stratum $\cA_{g,b_0}$ such that for every $x\in Y_{b_0}(\overline{\mathbb{F}}_p)$, the automorphism group of the universal principally polarized abelian variety at $x$ equals $\{\pm 1\}$.
\end{theorem*}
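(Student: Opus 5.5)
We argue through Rapoport--Zink uniformization of the supersingular stratum. Fix a superspecial principally polarized abelian variety $A_0$ over $\overline{\mathbb{F}}_p$ with polarized $p$-divisible group $(\mathbb{X},\lambda)$. Let $I$ be the associated inner form of $\GSp_{2g}$; it is compact modulo centre at $\infty$. Every point $x\in\cA_{g,b_0}(\overline{\mathbb{F}}_p)$ has $A_x$ quasi-isogenous to $A_0$ compatibly with polarization and level structure. Consequently $\mathrm{Aut}(A_x,\lambda_x)$ is identified with the stabilizer in $I(\mathbb{Q})\cap(\text{level } N)$ of the corresponding point of the Rapoport--Zink space $\mathcal{M}$. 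Through level $N$ and the finiteness of polarized automorphism groups, this reduces to a finite problem.

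Modulo finitely many arithmetic subgroups $\Gamma\subset I(\mathbb{Q})$, the candidate nontrivial automorphisms form a finite set $S$ of elements $\gamma\in I(\mathbb{Q})$ of finite order with $\gamma\neq\pm1$. This holds because the relevant groups $\Gamma$ are discrete in $I(\mathbb{Q}_p)$ and compact-mod-centre elsewhere, hence finite on each stabilizer. By a Baire-type argument it suffices to prove the following local statement. For each irreducible component $Z$ of $\mathcal{M}_{\mathrm{red}}$, and each $\gamma\in J_b(\mathbb{Q}_p)$ of finite order with $\gamma\neq\pm1$ that preserves $Z$, the fixed locus $Z^\gamma$ is a proper closed subset of $Z$. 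Only finitely many $\gamma$ modulo the stabilizer of $Z$ matter, so $Y_{b_0}$ is the complement of the images of the finitely many loci $Z^\gamma$.

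The key input is therefore the generic automorphism group of the universal polarized $p$-divisible group over a component $Z$. We compute it as the subgroup of $J_b(\mathbb{Q}_p)$ fixing the generic point of $Z$. We use the explicit description of the components: they are Deligne--Lusztig-type varieties, fibered over varieties for $\Sp$ or unitary groups over $\mathbb{F}_{p^2}$, coming from the Li--Oort / Görtz--He--Nie description of the supersingular locus. The plan is to show that an element fixing the generic point stabilizes a chain of lattices, whose associated Dieudonné modules are generic in the Li--Oort moduli space of polarized flag type quotients. Such an element must then act on the graded pieces of the chain by scalars in $\mathbb{F}_{p^2}^\times$. Compatibility with the polarization and with the Frobenius forces these scalars to equal $\pm1$ and to be constant. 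In terms of the quaternion algebra $D_{p,\infty}$, $\mathrm{Aut}$ of the generic polarized Dieudonné module then sits inside $\{\pm1\}\cdot(1+\Pi\,\mathcal{O})$. Finally, an element of finite order in $I(\mathbb{Q})$ lying in $\pm(1+\Pi\,\mathcal{O})$ and in the level-$N$ group must be $\pm1$. This uses the standard fact that torsion in a pro-$p$ congruence subgroup is trivial for $p$ odd, together with rigidity of level-$N$ structure for $N\ge3$.

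The main obstacle is the generic-automorphism computation on a component of $\mathcal{M}$. It is hardest for small $p$, where finite-order elements of $1+\Pi\mathcal{O}$ need not be trivial: $p=2$ admits $2$-torsion such as $-1\equiv1$ modulo $\Pi$ issues. This is exactly where the exceptions $(2,2)$ and $(3,2)$ arise. To handle it, we first treat $g=1,2,3$ by explicit calculation with Moret-Bailly families and Li--Oort families of low dimension. For larger $g$, we then induct: a generic point of a component restricts, on a suitable sub-Dieudonné module, to generic points of lower-dimensional components. This forces $\gamma$ to act as $\pm1$ on overlapping blocks, and the signs agree because the blocks overlap.
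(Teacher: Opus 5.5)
There is a genuine gap, and it sits where the content of the theorem lies.

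\textbf{The local step is unproved and too weak.} You only assert that an element fixing a generic point lies in $\{\pm1\}(1+\Pi\mathcal{O})$. Compatibility with Frobenius is automatic for elements of $J_b(\mathbb{Q}_p)$, so it cannot force anything. Even granting this depth-one statement, it does not give the theorem for small $p$. By Lemma~\ref{Lemma: torsion g geq 2} with $n=2$, the group $1+\Pi^s\Mat_g(\mathcal{O}_D)$ is torsion-free once $s\ge\lfloor 2/(p-1)\rfloor+1$. That means $s\ge1$ for $p\ge5$, but $s\ge2$ for $p=3$ and $s\ge3$ for $p=2$. Your ``standard fact'' is false at $p=3$: since $\mathbb{Q}_3(\zeta_3)$ is ramified, $\zeta_3\in 1+\Pi\mathcal{O}_D$. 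At $p=2$ one already has $-1\in 1+\Pi^2\mathcal{O}_D$.

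This is not a technicality. For $g=2$, every point $M$ of a component satisfies $\Pi\Lambda_0\subset M\subset\Lambda_0$ with $\Lambda_0=M_\tau$. Hence the whole depth-one congruence subgroup of the unitary group fixes the component pointwise. For $p=3$ this subgroup contains elements of order $3$, e.g.\ $\operatorname{diag}(\zeta_3,\Pi^{-1}\zeta_3\Pi)$ for the hyperbolic $\Pi$-modular form with Gram matrix $\left(\begin{smallmatrix}0&\Pi\\ \bar\Pi&0\end{smallmatrix}\right)$. So the purely local statement you reduce to is false for $(g,p)=(2,3)$, a case the theorem covers. That case needs global input, as in \cite{ibukiyama2020polarizations,karemakerpries2019fullymaximal}.

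\textbf{The global step is misformulated.} $\Aut(A_x,\lambda_x)$ does not respect level-$N$ structure; otherwise it would be trivial and would not contain $-1$. So level rigidity plays no role. What you need is that a torsion element of $I(\mathbb{Q})$ whose $p$-component is a central scalar is central in $I$, hence lies in $\mathbb{Q}^\times$ and equals $\pm1$. Compare the argument via \cite{deJong1998Homomorphisms} in the proof of Theorem~\ref{thmmain1}.

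\textbf{The induction for $p\in\{2,3\}$ has no foundation.} A Dieudonn\'e module with $a=1$ is indecomposable. You exhibit no canonical $\gamma$-stable sub-Dieudonn\'e modules that are generic points of lower-genus components. More decisively, for $p=2$ the statement fails for every $g\le 3$, including your base case $g=1$, yet holds for $g\ge4$. So it cannot be inherited from smaller genus.

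What makes it true is that the generic stabilizer is, up to central scalars, a congruence subgroup whose depth grows linearly with $g$. The paper only cites this theorem from \cite{viehmann2026oort}, but it runs exactly this mechanism in the split unitary analogue, where the $p$-divisible group carries no extra structure. There Theorem~\ref{Theorem: generic automorphism group} identifies the generic automorphism group with $\Gamma_{\gen}$, of depth $N=gm(n-m)-n+1$. Formally this is $g-1$ for slope $\frac12$, and $g-1\ge3$ holds exactly for $g\ge4$, matching the exceptions $(2,2),(3,2)$.

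The proof of that theorem, for every $\gamma\notin\Gamma_{\gen}$, produces a point that $\gamma$ moves. It uses:
\begin{itemize}
\item the filtration $M\cap\Lambda_j$ of $M=\mathcal{D}v$;
\item the bases $[F^aV^bv]_{j+1}$ of its graded pieces and the determination of $N$ (Lemmas~\ref{Lemma: basis of level s} and~\ref{Lemma value PiN subset M});
\item the non-vanishing of Frobenius-twisted minors (Lemmas~\ref{Lemma: minors of delta matrix} and~\ref{Lemma: Minors of delta matrix bis}).
\end{itemize}
Only then do torsion estimates as in Section~\ref{Section: generic torsion} finish the argument. Your proposal has no substitute for this depth computation. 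As it stands it can at best handle $p\ge5$, and even there your mod-$\Pi$ claim still has to be proved.
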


Karemaker and Yu \cite{karemakeryu2026} have recently obtained another proof of the conjecture for odd primes. 

The main goal of this article is to study the analogous questions for split unitary Shimura varieties of signature $(m_0,n_0-m_0)$ for integers $n_0 \geq 2$ and $1 \leq m_0 \leq n_0-1$, which we are going to introduce in Section~\ref{Section: Shimura varieties}. In short, they parametrize principally polarized abelian varieties with an additional endomorphism structure, given by a central division algebra over an imaginary quadratic field $L$ which splits at the prime $p$. We denote the special fiber of the split unitary Shimura variety by $\cS$. As before, this admits a Newton stratification into finitely many locally closed strata, which we denote by $\cS_{b}$. Again, there is a unique closed stratum, called the basic stratum and denoted by $\cS_{b_0}$. However, the universal abelian variety over $\cS_{b_0}$ is no longer supersingular unless $n_0=2m_0$.

In the supersingular case, i.e.~for signature $(g,g)$ for $g \geq 2$, some of the results of this paper have already been obtained by the third author in \cite[Section 5]{viehmann2026oort}.

\subsection{Main results}
Before stating our main results, we specify what we mean by automorphism groups in our context.
\begin{definition}
Let $k$ be a field of characteristic $p$, and let $(A,i,\lambda)$ correspond to a $k$-valued point of the split unitary Shimura variety $\cS$. Then we set
\[\Aut(A,i,\lambda):=\{\varphi:A\overset{\sim}{\longrightarrow} A\mid i(b) \circ\varphi=\varphi \circ i(b) \text{ for all } b \in \O_B \text{ and } \varphi^\vee \circ \lambda\circ \varphi=\lambda\}.\]
\end{definition}

Let $(-)^*$ denote the non-trivial Galois action on $L$, and define the norm of an element $\alpha \in L$ as $\Norm(\alpha) := \alpha^*\alpha$. Note that every $\alpha \in \O_L$ that satisfies $\Norm(\alpha) =1$ yields an automorphism $i(\alpha) \in \Aut(A,i,\lambda)$ for any $k$-valued point $(A,i,\lambda)$ of $\cS$ as above. Hence the minimal possible automorphism group for any such $(A,i,\lambda)$ is $\O_L^1$, the (multiplicative) subgroup of all norm $1$ elements of $\O_L$. Our main result is now as follows.
\begin{theorem}\label{thmmain1}
    Assume that the split unitary Shimura datum is of signature $(m_0,n_0-m_0)$ such that either
    \begin{enumerate}
        \item $p \geq 3$ and $2 \leq m_0 \leq n_0-2$ or
        \item $p=2$ and either $3 \leq m_0 \leq n_0-3$ or $n_0$ is odd and $m_0\in\{2,n_0-2\}$.
    \end{enumerate}
    Then there is an open dense subscheme $Y_{b_0}$ of $\mathcal S_{b_0}$ such that 
        \[\Aut(A_y,i_y,\lambda_y) = \O_L^1\]
    for every closed point $y \in Y_{b_0}(\overline{\F}_p)$.
\end{theorem}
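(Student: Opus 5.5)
The plan follows the strategy of \cite{viehmann2026oort}: first reduce from the abelian variety to its $p$-divisible group, then control automorphisms of the $p$-divisible group generically on the basic stratum.

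\emph{Reduction to $p$-divisible groups.} Since $L$ splits at $p$, the $p$-divisible group with $\O_B$-action of $A_y$ decomposes, by Morita equivalence, into $X\times X^\vee$. Here $X$ is a $p$-divisible group of height $n_0$ and dimension $m_0$, without extra structure, and the polarization identifies the second factor with the dual of the first. Hence the $p$-adic realization of $\Aut(A_y,i_y,\lambda_y)$ embeds into $\Aut(X_y)$. Moreover, $\Aut(A_y,i_y,\lambda_y)$ is finite, because the polarization forces it to be a discrete subgroup of a compact group. Every element $\varphi$ is therefore torsion, and its image in $\Aut(X_y)$ commutes with the action of $\O_L$. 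I will show that for generic $y$ the only finite-order automorphisms of $X_y$ that can arise this way are the images of elements of $\O_L^1$. Since $\Aut(A_y,i_y,\lambda_y)\to \Aut(X_y\times X_y^\vee)$ is injective, this gives the claim.

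\emph{Generic automorphisms of the basic $p$-divisible group.} In the basic isogeny class, $X$ is isoclinic of slope $m_0/n_0$, and its quasi-isogeny group is $D^\times$, with $D$ the division algebra over $\Q_p$ of invariant $m_0/n_0$ (after reducing to $\gcd(m_0,n_0)=d$, where it becomes $M_d$ of a smaller division algebra). Using Rapoport--Zink uniformization of $\cS_{b_0}$ and the description of its irreducible components via Dieudonn\'e lattices, I would compute the stabilizer of a generic lattice in the relevant affine Deligne--Lusztig variety. The expected answer is that, away from a closed subset of smaller dimension, $\Aut(X_y)$ is as small as possible: $\Z_p^\times$ times possibly a small pro-$p$ group, i.e.\ $\End(X_y)=\Z_p$ up to controlled error. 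This is the ``explicit computation of the generic automorphism group'' mentioned in the abstract. To carry it out, I would fix an automorphism $g$ of finite order of the isocrystal and bound the dimension of the fixed locus of $g$ in each irreducible component. Up to conjugacy there are only finitely many torsion $g$, since their characteristic polynomials have bounded degree and roots of unity as roots. Each component of the fixed locus corresponds to a Levi-type (centralizer) subgroup, and I would show it has strictly smaller dimension than the component whenever $g$ is not scalar.

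\emph{Conclusion and main obstacle.} A global torsion automorphism must then act on $X_y$ as a root of unity $\zeta\in\Z_p^\times$. It commutes with $\O_L$ and preserves $\lambda$, so under the Rosati involution it lies in a commutative algebra containing $L$. Comparing with the action on $X^\vee$ forces $\zeta$ to come from $\O_L^1$. Removing the finitely many fixed loci from $\cS_{b_0}$ gives $Y_{b_0}$, which is open by the finiteness above and dense by the dimension count. I expect the main obstacle to be the dimension estimate for fixed loci of non-scalar torsion $g$. This estimate fails precisely in the excluded cases: for $m_0\in\{1,n_0-1\}$ the basic locus is zero-dimensional, and for small $p$, notably $p=2$ with an involution-type $g$, the codimension count degenerates. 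I would handle it by writing $g$'s centralizer as a product of general linear groups over extensions and comparing the dimension formula for basic loci, $\frac12\bigl(m_0(n_0-m_0)-\gcd\text{-correction}\bigr)$-type expressions, for the centralizer with that for $\GL_{n_0}$. I would treat $p=2$ separately, since eigenvalue $-1$ is congruent to $1$ mod $2$ and the weaker bounds there require $3\le m_0\le n_0-3$ or $n_0$ odd.
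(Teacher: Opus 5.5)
There is a genuine gap in your key step. You want to show that every non-scalar torsion $\gamma$ (your $g$) has a proper fixed locus on each component. Your method is to identify that locus with Rapoport--Zink spaces of the centralizer of $\gamma$ and compare dimensions. That identification is only plausible for $\gamma$ of order prime to $p$, where $\gamma$-stable $\breve{\Z}_p$-lattices split into eigen-components. The elements that decide the theorem are of $p$-power order in the pro-$p$ group $V_1=1+\pi\Mat_g(\O_D)$. For them, stable lattices need not split, and the centralizer does not control the fixed locus.

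In fact every element of $V_N=1+\pi^N\Mat_g(\O_D)$, with $N=gm(n-m)-n+1$, fixes \emph{every} point of $\mathcal C_{\Lambda_0}$, because $\Lambda_N=\pi^N\Lambda_0\subset M$ for all such $M$. For example, take $p=2$, $n_0=6$, $m_0=2$, so $g=2$, $n=3$, $N=2$. The involution $\diag(-1,1)=1+\pi^3\diag(-1,0)\in V_N$ fixes the whole $2$-dimensional component. Yet its centralizer $D^\times\times D^\times$ only has $0$-dimensional Rapoport--Zink spaces. Your comparison would therefore prove a false statement in this (excluded) case, so it cannot be the argument in the admissible range either, even though the target statement is true there.

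Likewise, the excluded cases do not come from a degenerating codimension count. What matters is whether the depth $N$ is large relative to $n/(p-1)$, so that $1+\pi^N\Mat_g(\O_D)$ (and $1+\pi^{N-1}\O_D$ if $n\mid g$) has no non-scalar torsion. For instance, $1+\pi^s\Mat_g(\O_D)$ is torsion-free for $s\geq\lfloor n/(p-1)\rfloor+1$.

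The missing idea is the exact determination of the generic stabilizer: $\Aut(M,F)=\Gamma_{\gen}$ on a dense open subset of $\mathcal C_{\Lambda_0}$. The inclusion $\Gamma_{\gen}\subseteq\Aut(M,F)$ comes from $\Lambda_N\subset M$. That in turn comes from describing the graded pieces $M_j/M_{j+1}$ through the vectors $F^aV^bv$ with $(a,b)\in\mathcal B_j$. The reverse inclusion needs, for each $\gamma\notin\Gamma_{\gen}$, a generator $v$ with $\gamma v\notin\mathcal D v$. The paper produces this via non-vanishing of explicit minors, using a second truncation level when $|\mathcal B_{s_0}|=g$. Only after this does the torsion analysis of congruence subgroups yield the numerical conditions.

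Two smaller points also need fixing.
\begin{itemize}
\item Having finitely many torsion conjugacy classes does not give finitely many fixed loci on a component, since conjugating by $\GL_g(\O_D)$ moves them. What you need is that $V_N$ acts trivially, so fixed loci depend only on the image in the finite group $\GL_g(\O_D)/V_N$.
\item In the last step, ``lies in a commutative algebra containing $L$'' does not give $\varphi\in L$. You need the injectivity of $\End(A)\otimes\Z_p\to\End(A[p^\infty])$. This forces $\O_L[\varphi]$ to have rank $2$, hence $\varphi\in L\cap\End(A)=\O_L$, and then the polarization gives norm $1$.
\end{itemize}
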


Note that $\O_L^1 = \{\pm 1\}$ unless either $L = \Q(\zeta_4)$ or $L = \Q(\zeta_6) = \Q(\sqrt{-3})$. In these exceptional cases, the automorphism group is generically given by the cyclic subgroup generated by $\zeta_4$ resp. $\zeta_6$. See Example \ref{example: exceptional cases} for further discussion.

In order to prove Theorem \ref{thmmain1}, we reduce it to a statement about the associated reduced Rapoport-Zink moduli space $\M$ via $p$-adic uniformization. It is a main feature of split unitary Shimura varieties that their Rapoport-Zink spaces parametrize $p$-divisible groups within a given basic isogeny class, but without any additional structure of PEL type, compare Section \ref{Section: RZ space}. Next, we reduce the question to the $a=1$-locus $\mathcal M^\circ$, an open and dense subscheme of $\M$ with a particularly nice and uniform description. It allows us not only to prove Theorem \ref{thmmain1}, but also to compute the generic automorphism group of the universal $p$-divisible group completely.

\begin{theorem}\label{thmmain3}
Let $g = \gcd(m_0,n_0)$, $n = \frac{n_0}{g}$ and $m= \frac{m_0}{g}$. Let $D$ be the division algebra over $\Q_p$ of invariant $\frac{m}{n}$ with maximal order $\O_D$ and a uniformizer $\pi$. Let
\[N = gm(n-m)-n+1.\]
    
Then there is an open and dense subscheme $Y\subseteq \mathcal M^{\circ}$ such that at every point of $Y$ defined over an algebraically closed field, the automorphism group of the universal $p$-divisible group is isomorphic to the group $\Gamma_{\gen}$, which in the case of $n \nmid g$ is defined as
\[\Gamma_{\gen} := \Z_p^\times + \pi^N \Mat_g(\O_D)\]
and otherwise is defined as
\[\Gamma_{\gen} := \Z_p^\times + \pi^{N-1}\O_D + \pi^N\Mat_g(\O_D).\]
\end{theorem}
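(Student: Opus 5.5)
We work with Dieudonné theory throughout. Fix the basic isocrystal $N_{b_0}$ of slope $\frac{m}{n}$ and height $n_0 = gn$, whose endomorphism algebra is $J = \Mat_g(D)$. Every point $x$ of $\mathcal M^\circ$ over an algebraically closed field $k$ is a Dieudonné lattice $M_x \subseteq N_{b_0}\otimes W(k)$ with $a(M_x)=1$. The automorphism group of the universal $p$-divisible group at $x$ is then the stabiliser $\Gamma_x = \{h \in J^\times : hM_x = M_x\}$. The proof has two halves: first show $\Gamma_{\gen}\subseteq \Gamma_x$ for every $x\in\mathcal M^\circ$, then show equality on a dense open subset.

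For the lower bound, I would use the explicit description of the $a=1$ locus. Such a lattice is generated over the Dieudonné ring by a single vector $v$, so $M_x = \mathcal D v$. Write $\Lambda_{\min}$ for the minimal $J$-stable ($\O_D$-type) lattice contained in all such $M_x$, and $\Lambda_{\max}$ for the maximal one containing them. Computing the relevant lengths via the slope $\frac mn$ and the $a=1$ condition should give $\Lambda_{\max} \subseteq \pi^{-N'}\Lambda_{\min}$. From this, $\pi^N\Mat_g(\O_D)$ maps every $M_x$ into $\Lambda_{\min}\subseteq M_x$, and therefore $\Z_p^\times + \pi^N\Mat_g(\O_D)\subseteq \Gamma_x$.

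When $n\mid g$, I expect an extra invariant piece $\pi^{N-1}\O_D$, embedded diagonally as scalars. The reason is that $\pi^{N-1}$ acting by a scalar preserves the filtration $\pi^i\Lambda$ compatibly with the generator up to terms that land back in $M_x$; the scalar $\pi$-action commutes with $F$ and $V$ up to the twist $\sigma$. The divisibility $n\mid g$ should be exactly what makes the residue obstruction cancel.

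For the upper bound, take $h\in \Gamma_x$ and reduce modulo $\pi^N\Mat_g(\O_D)$. The question becomes: for which classes $\bar h$ in the finite ring $\Mat_g(\O_D/\pi^N)$ is $hv \in M_x$? In coordinates, $M_x$ is cut out by the coefficients $c_i\in k$ of $v$ with respect to a standard basis adapted to $\Lambda_{\min}$; by the paper's description of $\mathcal M^\circ$ these coefficients give affine-space-like coordinates. The condition $hM_x=M_x$ is then a finite system of polynomial equations in the $c_i$ and their Frobenius twists $c_i^{\sigma^j}$, parametrised by $\bar h$. Since there are only finitely many classes $\bar h$, and there are countably many $h$ overall but finitely many relevant conditions modulo $\Gamma_{\gen}$, it suffices to show the following: for each $\bar h\notin \Gamma_{\gen}/(\pi^N)$, the locus where $\bar h$ stabilises $M_x$ is a proper closed subset. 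Removing these finitely many closed subsets from $\mathcal M^\circ$ gives $Y$.

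The main obstacle is this last step. For each non-scalar class $\bar h$ we need an explicit point, or an explicit coordinate relation, showing the stabiliser condition fails generically. I would write $h = a + \pi^j h_j + \dots$ with $a\in\Z_p^\times$ and $j$ minimal such that $h_j$ is non-scalar, or such that $h_j\neq 0$ in the $n\nmid g$ case. I would then look at the leading-order condition in the coordinate $c$ at depth $N-j$. This yields an equation of the form $h_j(c) \equiv c^{\sigma^{r}}\cdot(\text{unit})$ in a finite-dimensional $k$-space. Because generic $c$ is not fixed by any non-scalar semilinear operator, this equation fails generically. The delicate point is the borderline degree $j=N-1$, where scalars in $\O_D$ survive exactly when $n\mid g$. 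This needs a careful count using $N = gm(n-m)-n+1$ to check which Frobenius twist appears.
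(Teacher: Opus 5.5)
Your overall architecture matches the paper's. For the lower bound you sandwich $M$ between vertex lattices, $\Lambda_N\subseteq M\subseteq M_\tau=\Lambda_0$. For the upper bound you use three ingredients: $\Gamma_{\gen}$ has finite index in $\GL_g(\O_D)$, fixed loci are closed, and the component is irreducible, so it suffices to find one non-stabilised lattice per coset. But the step you call the main obstacle is where your argument fails. The leading-order test at level $j$ asks whether $[(h-a)v]_{j+1}$ lies in the span of the $[F^aV^bv]_{j+1}$, $(a,b)\in\mathcal B_j$. This test is \emph{vacuous} in two situations that genuinely occur below $N-1$, and there ``generic $c$ is not fixed by a non-scalar semilinear operator'' gives nothing.

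\textbf{First situation: $|\mathcal B_j|=g$.} Then Lemma \ref{Lemma: basis of level s} gives $M_j/M_{j+1}=\Lambda_j/\Lambda_{j+1}$ for \emph{every} $M$ in the component, so no level-$j$ condition can fail. This happens for certain $j$ with $(g-1)m(n-m)\le j\le N-1$; for example $g=1$, $(m,n)=(2,7)$, $N=4$, $h=1+\pi^2u$. One must pass to the first level $s_1>j$ with $|\mathcal B_{s_1}|<g$. Before doing so, subtract a combination $\sum_\alpha P_\alpha F^aV^bv$ whose coefficients are rational functions of the coordinates. Then show the relevant minors survive this correction; that is exactly Lemma \ref{Lemma: Minors of delta matrix bis}, via affineness in the high-level variables and degree bookkeeping.

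\textbf{Second situation: $n\mid j$ with an unramified leading term.} Suppose the leading term is $p^{j/n}\mu$ with $\mu\in\Z_{p^n}$ and $\bar\mu\in\F_{p^n}\setminus\F_p$. The simplest instance is $h=\mu\cdot 1$, which lies outside $\Gamma_{\gen}$ as soon as $N\ge 2$. Then $[p^{j/n}\mu v]_{j+1}=\bar\mu\,[p^{j/n}v]_{j+1}\in[M]_{j+1}$ for all $M$. With your definition of $j$, such $h$ is either invisible (it is ``scalar'' in $\Mat_g(\O_D)$) or undetectable at level $j$.

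The paper handles this (Case 2 in deducing Theorem \ref{Theorem: generic automorphism group}) as follows. It subtracts the $\breve{\Z}_p$-scalar $p^{j/n}\mu$, which preserves every $M$. The difference is nonzero at level $j+1$: on $e_{i,1}=\tau_1e_{i,0}$ the endomorphism $h$ acts through $\mu^{\sigma^b}$ rather than $\mu$, since $\tau_1$ is $\sigma^b$-linear with $\gcd(b,n)=1$. Since $n\nmid j+1$, part (ii) of Proposition \ref{proposition: U delta density max case} then applies; notably it does not assume compatibility with $\tau_1$. So the ``borderline'' phenomenon is not confined to $j=N-1$. It occurs at every multiple of $n$, and $j=N-1$ is special only because level $N$ is already swallowed by $\Lambda_N\subseteq M$.

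\textbf{Lower bound.} The key quantitative input is left as ``should give''. What is needed is exactly that $\Lambda_N$ is the largest vertex lattice in $M$ when $\Lambda_0=M_\tau$ (you have min/max reversed). The paper gets this from $|\mathcal B_j|=g$ for $j\in\llbracket N,N+n-1\rrbracket$ plus $\tau_1$-stability, in Lemma \ref{Lemma value PiN subset M}.

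Your mechanism for the extra piece $\pi^{N-1}\O_D$ is also not the right one, since $\pi$ commutes with $F$ on the nose. The actual reason is that $n\mid g\iff n\mid N-1$ (because $\gcd(m,n)=1$), so $\pi^{N-1}\O_D=p^{(N-1)/n}\O_D$. Modulo $\Lambda_N$, an element $p^{(N-1)/n}\xi$ agrees on $v$ with the $\breve{\Z}_p$-scalar $p^{(N-1)/n}\tilde\xi$, where $\tilde\xi$ lifts the residue of $\xi$ in $\F_{p^n}$. Hence it maps $\mathcal Dv$ into itself.

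\textbf{Finiteness.} You must use $\Gamma_x\subseteq\operatorname{Stab}(M_\tau)\cong\GL_g(\O_D)$, because in $J^\times=\GL_g(D)$ the subgroup $\Gamma_{\gen}$ has infinite index.
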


Since the case of signature $(g,g)$ for $g \geq 2$ has already been treated in \cite{viehmann2026oort}, we assume throughout the paper that the signature $(m_0,n_0-m_0)$ is different from $(g,g)$ for $g\geq 2$. In particular, this entails that $n \geq 3$.

Let us briefly comment on the relation of our results with the notion of depth, a numerical invariant for the complexity of Rapoport-Zink spaces (or more generally affine Deligne-Lusztig varieties) which was introduced by Schremmer and the third named author. We refer to \cite[Definition 2.1]{schremmer2025affinedelignelusztig} for a formal definition.

In the setting of split unitary Shimura data as in Section \ref{Section: Shimura varieties}, the depth is given by
\begin{equation*}
    \operatorname{depth}(G,\mu) = \frac{m_0(n_0-m_0)}{n_0}.
\end{equation*}

In terms of this invariant, we see that Theorem \ref{thmmain1} holds whenever $\operatorname{depth}(G,\mu) \geq 2$ and in certain cases when $1 < \operatorname{depth}(G,\mu) < 2$. In the remaining cases, we expect the generic automorphism group to be larger than $\{\pm 1\}$. See Remark \ref{remark: remaining cases} for further discussion.

\subsection{Overview of the paper}
In Section~\ref{Section: Shimura varieties} we introduce the split unitary Shimura variety $\cS$ by specifying its PEL datum and the moduli problem it represents. We also recall the Newton stratification on $\cS$.
In Section~\ref{Section: RZ space}, we introduce the Rapoport-Zink space associated to the split unitary PEL setup, and use the $p$-adic uniformization theorem to reduce  Theorem~\ref{thmmain1} to a problem about the generic automorphism group of the universal $p$-divisible group. In doing so, we also explain how to use Dieudonné theory to make the geometry of the $a=1$ locus of the Rapoport-Zink space in question more explicit. Section~\ref{Section: automorphisms of lattices} is the main computational part of the paper and deals with the proof of Theorem~\ref{thmmain3}, i.e.~ with the generic automorphism group of the universal $p$-divisible group. In this section, we make crucial use of truncations of Dieudonné modules. Finally, in Section~\ref{Section: generic torsion} we study the torsion in the aforementioned generic automorphism group in order to deduce Theorem~\ref{thmmain1}.

\subsection{Acknowledgements}
We thank Chia-Fu Yu for reading a first version of this paper and pointing out a mistake in our original statement of Theorem \ref{thmmain1}. Furthermore we thank Pol van Hoften, Valentijn Karemaker and Ioannis Zachos for helpful conversations. During this work, the authors were partially supported by the DFG through the Collaborative Research Centre CRC 1442 ‘Geometry: Deformations and Rigidity’, under Germany's Excellence Strategy EXC 2044/2 – 390685587, ‘Mathematics M\"unster: Dynamics–Geometry–Structure’ and by the third named author's Leibniz prize.

\section{Split unitary Shimura varieties}\label{Section: Shimura varieties}
In this section, we recall the relevant split unitary Shimura varieties. Their characteristic feature is that the associated local reductive group at $p$ can be identified with a general linear group (at least up to a $\G_m$-factor). In our exposition, we follow \cite[Chapter 5]{haines2004svbadreduction}.

\subsection{Split unitary PEL datum}
Fix $\overline{\Q} \hookrightarrow \C \iso \overline{\Q}_p$. Let $L$ be an imaginary quadratic number field with conjugate complex embeddings $\nu,\nu^*: L \hookrightarrow \C$ (which we may assume to factor through our fixed embedding $\overline{\Q} \hookrightarrow \C$). We assume that the prime $p$ splits in $L$ as
\[(p) = \mathfrak p \mathfrak p^*,\]
where $\mathfrak p$ is determined by our fixed embedding $\overline{\Q} \hookrightarrow \overline{\Q}_p$, and $\mathfrak p^*\neq \mathfrak p$ is its conjugate under the non-trivial element of $\Gal(L/\Q)$. Thus we may identify $L_\mathfrak p = L_{\mathfrak p^*}= \Q_p$.

Further, we fix a central division algebra $\mathbf D$ over $L$ of dimension $n_0^2$, equipped with an involution $*$ which extends the non-trivial Galois action on $L$ over $\Q$. By our assumptions on $L$, we have an identification
\begin{equation*}
    \mathbf D \otimes \Q_p = \mathbf D_\mathfrak p \times \mathbf D_{\mathfrak p^*},
\end{equation*}
with both factors being central simple algebras over $\Q_p$. Furthermore, the involution induces an isomorphism $\mathbf D_\mathfrak p \overset{\sim}{\to} \mathbf D_{\mathfrak p^*}^{\opp}$. We assume that $\mathbf D$ splits at $p$, and fix an identification 
\begin{equation*}
    \mathbf D \otimes \Q_p = \mathbf D_\mathfrak p \times \mathbf D_{\mathfrak p^*} = \Mat_{n_0}(\Q_p) \times \Mat_{n_0}(\Q_p).
\end{equation*}
We may choose this identification so that the involution $*$ on $\mathbf D \otimes \Q_p$ gets identified with the involution $(X,Y) \mapsto (Y^T,X^T)$ of tuples of matrices.

We additionally suppose that we are given an $\R$-algebra homomorphism
\[h_0: \C \to \mathbf D \otimes_\Q \R\]
such that $h_0(z)^* = h_0(\overline{z})$ and such that the involution $x \mapsto h_0(i)^{-1}x^*h_0(i)$ is positive.

Let us now specify the (rational) PEL datum $(B,\iota,V,(\cdot,\cdot),h_0)$. We set $B= \mathbf D^{\opp}$ and view $V=\mathbf D$ as a left $B$-module. Note that we may interpret $h_0$ as an $\R$-algebra homomorphism $h_0: \C \to \End_B(V) \otimes \R$.

To define the involution $\iota$ on $B$, one first shows that there exists an element $\xi \in \mathbf D^\times$ satisfying $\xi^* = -\xi$. Let $x^\iota := \xi x^* \xi^{-1}$ for all $x \in B$ and consider the non-degenerate alternating pairing
\[(\cdot,\cdot): \mathbf D \times \mathbf D \to \Q,\quad (x,y) := \Tr_{\mathbf D/\Q}(x\xi x^*),\]
which satisfies $(bx,y) = (x,b^\iota y)$ and $(h_0(z)x,y) = (x,h_0(\overline{z})y)$. Upon possibly replacing $\xi$ with $-\xi$, we may also assume that $(\cdot,h_0(i)\cdot)$ is positive definite.

The associated reductive group $\mathbf G$ over $\Q$ is defined on $R$-points by
\[\mathbf G(R) = \{x \in (D \otimes_\Q R)^\times \mid x^*x \in R^\times\}.\]
By our assumptions on $L$ and $\mathbf D \otimes \Q_p$, the local reductive group $G = \mathbf G_{\Q_p}$ evaluated on $R$-points is
\[G(R) = \{(x_1,x_2) \in \GL_{n_0}(R) \times \GL_{n_0}(R) \mid x_2^* x_1 = c \text{ for some } c \in R^\times \}\]
and it thus may be identified with the split group $\GL_{n_0} \times~\G_{m}$ over $\Q_p$ by sending $(x_1,x_2)$ to $(x_1,c)$.

\begin{remark}\label{Remark: Hasse principle}
    By the discussion in \cite[§7]{kottwitz1992svfinitefields}, the reductive group $\mathbf G$ satisfies the Hasse principle. For even $n_0$, this is the case for any PEL datum of type (A) as considered in \cite{kottwitz1992svfinitefields}, but for odd $n_0$ this is due to the fact that $L$ is imaginary quadratic.
\end{remark}

The datum $(B,\iota,V,(\cdot,\cdot),h_0)$ defines a PEL datum of type (A) in the classification of Kottwitz. The complexification of $h_0$ can be written as
\[h_{0,\C}: \C \otimes_\R \C \simeq \C \times \C \to \mathbf D \otimes \C = \mathbf D \otimes_{F,\nu} \C \times \mathbf D \otimes_{F,\nu^*} \C,\]
where the first isomorphism is defined by $z_1 \otimes z_2 \mapsto (z_1z_2,\overline{z}_1z_2)$. The minuscule cocharacter associated with the PEL datum is then given by 
\[\mu: \C \to \mathbf D \otimes \C, \quad z \mapsto h_{0,\C}(z,1)\]
and we may decompose 
\[\mathbf D_\C = \mathbf D \otimes_{F,\nu} \C \times \mathbf D\otimes_{F,\nu^*} \C.\]
Choosing isomorphisms $\mathbf D \otimes_{F,\nu} \C \iso \mathbf D \otimes_{F,\nu^*} \C \iso \Mat_{n_0}(\C)$, we may write 
\[\mu(z) = \diag(1^{m_0},(z^{-1})^{n_0-m_0}) \times \diag((z^{-1})^{m_0},1^{n_0-m_0})\]
for some integer $m_0 \in \llbracket1,n_0-1\rrbracket$. We say that the split unitary PEL datum is of signature $(m_0,n_0-m_0)$, which is equivalent to having an isomorphism
\[\mathbf G_\R \iso \GU(m_0,n_0-m_0).\]
Our assumptions imply that the reflex field $\mathbf E$ of $\mu$ localizes to $\mathbf E_\mathfrak p = \Q_p$. 

Next we specify the integral data. For this, we fix an isomorphism $\mathbf{D}_{\Q_p} \cong \Mat_{n_0}(\Q_p) \times \Mat_{n_0}(\Q_p)$ such that the involution $\iota$ goes over to $(X,Y) \mapsto (Y^T,X^T)$, and let $\O_B \subset B$ be the unique maximal order of $B$ such that $\O_B \otimes \Z_p$ gets identified with $\Mat_{n_0}^{\opp}(\Z_p) \times \Mat_{n_0}^{\opp}(\Z_p)$. We also fix an $\O_B$-lattice $\mathbf \Lambda \subset V$ such that
\[\mathbf\Lambda \otimes \Z_p = \Mat_{n_0}(\Z_p) \oplus \chi^{-1}\Mat_{n_0}(\Z_p),\]
which is a self-dual $\O_B \otimes \Z_p$-lattice. Here we identified $\xi \in \mathbf{D}^\times$ with an element of the form $(\chi^T,-\chi)$ for some $\chi \in \GL_{n_0}(\Q_p)$.

\subsection{Moduli problem}
Fix a sufficiently small compact open subgroup $K^p \subset \mathbf{G}(\A_f^p)$ that stabilizes $\mathbf \Lambda \otimes \widehat{\Z}^{(p)}$. We introduce the moduli problem in terms of isomorphisms  of abelian varieties instead of prime-to-$p$ quasi-isogenies, so that the automorphism groups we want to consider are well-defined.
\begin{definition}[\cite{lan2013arithmetic}, Def. 1.4.1.4]
    Let $\Sh_{K^p}$ denote the moduli stack in schemes over  $\O_{\mathbf{E}} \otimes \Z_{(p)}$ that sends $S$ to the groupoid whose objects consist of quadruples $(A,i,\lambda,\overline{\eta})$, where
    \begin{itemize}
        \item $A$ is an abelian scheme over $S$. 
        \item $i: \O_B \hookrightarrow \End_S(A)$ is an $\O_B$-action that satisfies the Kottwitz condition
        \begin{equation*}
            \det(i(b) \mid \Lie A) = \det (b \mid V_1)
        \end{equation*}
        for all $b \in \O_B$. 
        \item $\lambda: A \to A^\vee$ is a prime-to-$p$ polarization such that the induced Rosati involution on $\End_S(A)$ is compatible with the involution on $\O_B$ via the $\O_B$-action $i$.
        \item $\overline{\eta}$ is a $K^p$-level structure, see \cite[Def. 1.3.7.6]{lan2013arithmetic}.
    \end{itemize}
    An isomorphim of quadruples is an isomorphism that respects the $\O_B$-action, the polarization (on the nose and not up to a similitude factor) and the $K^p$-level structure.
\end{definition}

For sufficiently small $K^p$, the above moduli problem is representable by a smooth, quasi-projective scheme over $\O_{\mathbf E} \otimes \Z_{(p)}$, which is the integral model of the Shimura variety associated to the above PEL datum. From now on, we disregard the $K^p$-level structure from our notation and denote the base change to $\Spec \F_p$ by $\cS$.

Explicitly, let $(A,i,\lambda)$ denote the universal abelian scheme with $\mathbf G$-structure over $\cS$. For any $\Spec \F_p$-scheme $T$ let $x \in \cS(T)$ be an arbitrary $T$-point of $\cS$. Then the automorphism group of $(A,i,\lambda)$ at $x$ is defined as
    \[\Aut(A_x,i_x,\lambda_x) = \{\varphi: A_x \overset{\sim}{\to} A_x \mid \varphi \circ i_x(b) = i_x(b) \circ \varphi \text{ for all } b \in \O_B \text{ and } \varphi^\vee \circ \lambda_x \circ \varphi = \lambda_x \}.\]
If $x$ is a geometric point of $\cS$, then it is a classical fact that $\Aut(A_x,i_x,\lambda_x)$ is finite.

Next, we recall the Newton stratification on $\cS$. Consider the local group $G = \mathbf G_{\Q_p}$. Let $\cH = (A[p^\infty],i[p^\infty],\lambda[p^\infty])$ be the universal $p$-divisible group with $G$-structure over $\cS$.  Recall that the Kottwitz set $B(G)$ is defined as the set of $\sigma$-conjugacy classes in $G(\breve{\Q}_p)$. It classifies $p$-divisible groups with $G$-structure up to isogeny, or equivalently isocrystals with $G$-structure up to isomorphism, both over arbitrary algebraically closed fields. Our fixed identification $G = \GL_{n_0,\Q_p} \times~\G_{m,\Q_p}$ induces an identification
\begin{equation*}
    B(G) = B(\GL_{n_0}) \times B(\G_m) = B(\GL_n) \times \Z.
\end{equation*}
For $b \in B(G)$, the subset $\cS_b \subset \cS$ that consists of those geometric points $x$ of $\cS$ such that the $p$-divisible group with $G$-structure $\cH_x$ belongs to the isogeny class $b \in B(G)$, is locally closed. We endow $\cS_b$ with its reduced subscheme structure, and denote the resulting locally closed subscheme of $\cS$ also by $\cS_b$. These locally closed subschemes are called the Newton strata of $\cS$. 

The non-empty Newton strata are indexed by the subset of neutrally acceptable elements $B(G,\mu)$, and by our identification
\begin{equation*}
    B(G,\mu) = B(\GL_{n_0},(0^{n_0-m_0},(-1)^{m_0})) \times B(\G_m,0) = B(\GL_{n_0},(0^{n_0-m_0},(-1)^{m_0})).
\end{equation*}
Thus we have an injective map $B(G,\mu) \hookrightarrow B(\GL_{n_0})$, so we may think of its elements as  classical Newton polygons that lie below the polygon associated to the cocharacter $(1^{m_0},0^{n_0-m_0})$.

There is a unique closed Newton stratum in $\cS$, called the basic stratum. It is labelled by the element $b_0 \in B(G,\mu)$ corresponding to the Newton polygon of constant slope $\frac{m_0}{n_0}$. 

\section{The Rapoport-Zink space}\label{Section: RZ space}
In this section, we reduce Theorem~\ref{thmmain1} to a problem about the geometry of the associated Rapoport-Zink space via $p$-adic uniformization. We keep the notation established in the last section. 

Given a point $x \in \cS_{b_0}(\overline{\F}_p)$, it is well-known that we have an injection
\[\Aut(A_x,i_x,\lambda_x) \hookrightarrow \Aut(\cH_x).\]
The right-hand side is significantly larger, but also much easier to work with due to Dieudonné theory. We compute it generically in Section~\ref{Section: automorphisms of lattices} (cf. Theorem~\ref{Theorem: generic automorphism group}). Note that the above injection takes image in the torsion subgroup of $\Aut(\cH_x)$. Thus we study the generic torsion subgroup in Section~\ref{Section: generic torsion} (cf. Proposition \ref{Proposition: generic torsion in p-div automorphism}), from which we deduce Theorem~\ref{thmmain1}. For now, let us recall how to relate the geometry of the Shimura variety to the geometry of a Rapoport-Zink space.

The splitting conditions on our PEL datum at $p$ have strong implications for any $p$-divisible group $(X,i,\lambda)$ with $G$-structure over an arbitrary base. Recall that this means that $X$ is equipped with an endomorphism structure by 
\[\O_B \otimes \Z_p = \Mat_{n_0}(\Z_p) \times \Mat_{n_0}(\Z_p)\]
and with a polarization $\lambda: X \overset{\sim}{\to} X^\vee$. Letting $(\id_{n_0},0)$ resp. $(0,\id_{n_0})$ act on $X$, we see that $X$ splits into two $p$-divisible groups
\[X = X_1 \oplus X_2,\]
each of height $n_0$. Compatibility between polarization and endomorphism structure imply that $X_2 \cong X_1^\vee$ and that 
\[\lambda = \swap: X_1 \oplus X_2 \to X_2 \oplus X_1\]
is the swap map (at least up to a constant). Thus giving $X$ is equivalent to giving $X_1$, which is a $p$-divisible group of height $n_0$ without any additional PEL type structure.

Furthermore, automorphisms of $(X,i,\lambda)$ are also fully determined by automorphisms of $X_1$.
\begin{lemma}\label{Lemma: comparison automorphism groups}
    Let $(X,i,\lambda)$ be a $p$-divisible group with $G$-structure and decompose it into $X = X_1 \oplus X_2$ as before. Then
    \[\Aut(X_1) \overset{\sim}{\longrightarrow} \Aut(X,i,\lambda) \text{ via } f \mapsto \begin{pmatrix}
        f & 0 \\
        0 & (f^{-1})^\vee
    \end{pmatrix}.\]
    In particular, the identification preserves torsion automorphisms. Furthermore, the subgroup $\Z_p^\times \subset \Aut(X_1)$ is identified with
    \[\{(a,a^{-1})\mid a \in \Z_p^\times\} \subset \Aut(X,i,\lambda)\].
\end{lemma}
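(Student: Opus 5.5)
An automorphism of $(X,i,\lambda)$ commutes with the idempotents $e_1=(\id_{n_0},0)$ and $e_2=(0,\id_{n_0})$ in $\O_B\otimes\Z_p$, so it preserves the splitting $X=X_1\oplus X_2$ and is diagonal, $\varphi=\diag(f_1,f_2)$ with $f_j\in\Aut(X_j)$. The plan is therefore to describe the $\O_B\otimes\Z_p$-linearity condition and the polarization condition separately on such diagonal maps, and to check that together they say exactly that $f_2$ is determined by $f_1$ via duality.

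First, the endomorphism structure. Via $e_1,e_2$ the factor $\Mat_{n_0}(\Z_p)\times 0$ acts only on $X_1$ and $0\times\Mat_{n_0}(\Z_p)$ only on $X_2$. Here I would be careful about the normalization. Commuting with a full matrix algebra $\Mat_{n_0}(\Z_p)$ acting on a height $n_0$ group makes sense only if the ``$X_1$'' of the paper is understood appropriately, either via Morita equivalence (cutting further by $e_{11}$) or with each $X_j$ carrying the $\Mat_{n_0}$-action. In the Morita picture, $\Mat_{n_0}(\Z_p)$-linear automorphisms of $e_1X$ correspond bijectively to automorphisms of $e_{11}e_1X$, and I would identify that group with what the statement calls $X_1$. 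With this reading, $\O_B$-linear automorphisms of $X$ are exactly pairs $(f_1,f_2)\in\Aut(X_1)\times\Aut(X_2)$.

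Second, the polarization. Writing $\lambda=c\cdot\swap$ with $X_2\cong X_1^\vee$, we have $\varphi^\vee=\diag(f_1^\vee,f_2^\vee)$ acting on $X^\vee=X_1^\vee\oplus X_2^\vee=X_2\oplus X_1$. A direct $2\times2$ block computation shows that $\varphi^\vee\circ\lambda\circ\varphi=\lambda$ is equivalent to $f_2^\vee\circ f_1=\id$ and $f_1^\vee\circ f_2=\id$. These two conditions are duals of one another, and each is equivalent to $f_2=(f_1^{-1})^\vee=(f_1^\vee)^{-1}$. Hence $f\mapsto\diag(f,(f^{-1})^\vee)$ is a bijection $\Aut(X_1)\to\Aut(X,i,\lambda)$. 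It is a group homomorphism because $((fg)^{-1})^\vee=(f^{-1})^\vee(g^{-1})^\vee$, since dualizing reverses the order of composition.

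Torsion is preserved because we have a group isomorphism. For $a\in\Z_p^\times$, multiplication by $a$ is self-dual, so $(a^{-1})^\vee=a^{-1}$, which gives the image $\{(a,a^{-1})\}$.

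The only delicate step is the Morita bookkeeping and the sign or constant in $\lambda$. I expect both to be harmless: a constant $c$ cancels in the identity above, since $\varphi$ commutes with scalars.
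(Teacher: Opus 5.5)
Your proposal is correct and follows the same route as the paper. The idempotents $(\id_{n_0},0)$ and $(0,\id_{n_0})$ force $\varphi=\diag(f_1,f_2)$, and the block computation $\varphi^\vee\circ\swap\circ\varphi=\swap\circ\diag(f_2^\vee f_1,f_1^\vee f_2)$ then forces $f_2=(f_1^{-1})^\vee$. Your Morita remark clarifies a point the paper passes over: the factors cut out by the idempotents really have height $n_0^2$ and carry a $\Mat_{n_0}(\Z_p)$-action, so ``height $n_0$'' only holds after Morita reduction. This does not affect the argument.
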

\begin{proof}
    It is clear that the map is well-defined and injective. To prove surjectivity, note that any $g \in \Aut(X,i,\lambda)$ commutes with the action by $(\id_{n_0},0)$ and $(0,\id_{n_0})$, so it preserves both $X_1$ and $X_2$. Thus $g$ can be represented by a diagonal matrix of the form
    \[g=\begin{pmatrix}
        g_1 & 0 \\
        0 & g_2
    \end{pmatrix}\]
    with automorphisms $g_i\in \Aut(X_i)$ for $i =1,2$. Compatibility with the polarization implies that 
    \[\begin{pmatrix}
        0 & 1 \\
        1 & 0
    \end{pmatrix} = \begin{pmatrix}
        g_1 & 0 \\
        0 & g_2
    \end{pmatrix}^\vee \begin{pmatrix}
        0 & 1 \\
        1 & 0
    \end{pmatrix} \begin{pmatrix}
        g_1 & 0 \\
        0 & g_2
    \end{pmatrix} = \begin{pmatrix}
        0 & 1 \\
        1 & 0
    \end{pmatrix} \begin{pmatrix}
        g_2^\vee g_1 & 0 \\
        0 & g_1^\vee g_2
    \end{pmatrix},\]
    which implies that $g_2 = (g_1^{-1})^\vee$.
\end{proof}

Thus for all our purposes, it suffices to introduce Rapoport-Zink spaces which parametrize $p$-divisible groups of height $n_0$ without any additional PEL type structure, i.e.~Rapoport-Zink spaces for $\GL_{n_0}$.

As a framing object, we fix a $p$-divisible group $\X$ of height $n_0$ in the isogeny class of $b_0$, which we may assume to be defined over $\F_p$ and also to be minimal (see \cite{oort2005minimal}). Let $\breve{\Z}_p = W(\overline{\F}_p)$, and consider the functor $\M$ on the category of schemes over $\breve{\Z}_p$ where $p$ is locally nilpotent, that assigns to such a scheme $S$ the set of isomorphism classes of tuples $(X,\rho)$, where
\begin{itemize}
    \item $X$ is a $p$-divisible group of height $n_0$ over $S$
    \item $\rho: \X_{\overline{S}} \to X_{\overline{S}}$ is a quasi-isogeny over the reduction of $S$ mod $p$.
\end{itemize}
By \cite[Theorem 3.25]{rapoport1996periodspaces}, this functor is representable by a formal scheme locally formally of finite type over $\Spf \breve{\Z}_p$, called the Rapoport-Zink space associated to the basic isogeny class $b_0$. In the sequel, we are most interested in the underlying reduced scheme over $\overline{\F}_p$, which we from now on also denote by $\M$. Note that the group $J_{b_0}(\Q_p)$ of self-quasi-isogenies of the framing object $\X$ clearly acts on the Rapoport-Zink space $\M$.

\begin{remark}\label{Remark: RZ space of G vs. GL_n}
    If we denote by $\M_G$ the \textit{actual} Rapoport-Zink space associated with the group $G$ and the basic isogeny class $b_0$, then our identification $G = \GL_{n_0} \times \G_m$ implies that
    \[\M_G = \coprod_{\Z} \M\]
    because the Rapoport-Zink space associated with $\G_m$ consists of $\Z$-many copies of a single point. We will include the difference in our notation, but it is clear that it suffices to analyze generic conditions on $\M$ instead of $\M_G$.
\end{remark}

\subsection{\texorpdfstring{$p$-adic uniformization}{p-adic uniformization}}\label{subsection: p-adic uniformization}
By the $p$-adic uniformization theorem of Rapoport-Zink \cite[Theorem 6.30]{rapoport1996periodspaces}, there is an isomorphism of schemes over $\overline{\F}_p$
\[\Theta: I(\Q) \backslash \M_G \times \mathbf G(\A_f^p)/K^p \overset{\sim}{\longrightarrow} \cS_{b_0},\]
where $I(\Q)$ is the group of self-quasi-isogenies of a basepoint $(A,i,\lambda) \in \cS_{b_0}(\overline{\F}_p)$ whose $p$-divisible group is identified with $\X \oplus \X^\vee$. Here we implicitly use that the Hasse principle is satisfied for $\mathbf G$, see Remark \ref{Remark: Hasse principle}. In particular, the morphism
\[\M_G \times \mathbf G(\A_f^p)/K^p \to I(\Q) \backslash \M_G \times \mathbf G(\A_f^p)/K^p \overset{\Theta}{\to} \cS_{b_0}\]
is surjective and open, thus sends dense open subsets to dense open subsets. 

Thus in order to prove Theorem \ref{thmmain1}, the most important step is to compute the torsion in the automorphism group of the universal $p$-divisible group restricted to a generic subspace of $\M_G$, or equivalently $\M$, in order to then transport this generic subspace to $\cS_{b_0}$ via the $p$-adic uniformization morphism. 

In this direction, the next reduction is obtained by considering Oort's $a$-invariant.

\begin{definition}
    The $a$-number of a $p$-divisible group $X$ over an algebraically closed field $k$ of characteristic $p$ is defined as $a(X) = \dim_k\Hom(\alpha_p,X)$. 
\end{definition}

Let $\M^\circ \subset \M$ be the locus over which the $a$-number of the universal $p$-divisible group is equal to $1$. By \cite[Lemma 4.7]{viehmann2008modulispaces}, building on work of Oort, $\M^\circ$ is dense and open in $\M$. Thus we reduce to studying the generic torsion in the automorphism group of the $p$-divisible group over $\M^\circ$, which is more amenable to computations as we shall see shortly.

\subsection{Dieudonné modules}
In this subsection, we make the irreducible components of $\M^\circ$ and the group $J_{b_0}(\Q_p)$ more explicit by means of Dieudonné theory.

Recall the equivalence between $p$-divisible groups of height $n_0$ over perfect fields $k$ of characteristic $p$ and Dieudonné modules, i.e.~finite free $W(k)$-modules $M$ of rank $n_0$ equipped with a $\sigma$-linear Frobenius endomorphism $F: M \to M$ that satisfies $pM \subset FM$. Here $\sigma$ denotes the induced Frobenius on $W(k)$. Considering the $p$-divisible group only up to isogeny corresponds to tensoring $M$ with $W(k)[\frac{1}{p}]$. This leads to the notion of an isocrystal. Inside the isocrystal, we may define Verschiebung as $V = pF^{-1}$. Note that any Dieudonné module is stable under $V$.

Since our framing object $\X$ is defined over $\F_p$, its associated isocrystal $(N,F)$ is defined over $\Q_p$. Denote the base change of $N$ to $\breve{\Q}_p$ by $\breve{N}$. Then by the Dieudonné equivalence, points $(X,\rho) \in \M(\overline{\F}_p)$ correspond bijectively to Dieudonné lattices $(M,F) \subset (\breve N,F)$, i.e. $M \subset \breve N$ is a free $\breve{\Z}_p$-submodule of $N$ of rank $n_0$ satisfying $M \supset F(M) \supset pM$. We have
\[\Aut(X) = \Aut(M,F) = \{\gamma: M \to M \mid \gamma  \text{ linear and } F\gamma = \gamma F\}.\]

The isocrystal $\breve{N}$ over $\breve{\Q}_p$ can be further decomposed according to the Dieudonné-Manin classification. For this, let $g = \gcd(m_0,n_0)$ and let $n= \frac{n_0}{g}$ and $m = \frac{m_0}{g}$. Then $\breve{N}$ splits as the direct sum of simple isocrystals
\[\breve N = \bigoplus_{i=1}^g \breve N_i,\]
where each $\breve{N_i}$ is simple of slope $\frac{m}{n}$. Furthermore, for each $i \in \llbracket 1,g\rrbracket$, we can fix a vector $e_{i,0} \in \breve N_i$ that  satisfies $F^ne_{i,0} = p^me_{i,0}$. 

We consider the following two operators on $\breve{N}$.
\begin{definition}
    Let $a,b \in \Z$ such that $an+bm=1$. Let
    \begin{itemize}
        \item $\tau_0:= p^{-m}F^n: \breve{N} \to \breve{N}$. It is $\sigma^n$-linear.
        \item $\tau_1:= p^aF^b: \breve{N} \to \breve{N}$. It is $\sigma^b$-linear.
    \end{itemize}
\end{definition}

For any $l \geq 1$, we let $e_{i,l}= \tau_1^le_{i,0} \in \breve{N}_i$. Then the vectors $e_{i,l}$ for $i \in \llbracket1,g\rrbracket$ and $l \in \llbracket 0,n-1\rrbracket$ form a $\breve \Q_p$-basis of $\breve{N}$. One checks immediately that $e_{i,l+n}=pe_{i,l}$, $F(e_{i,l})=e_{i,l+m}$ and $V(e_{i,l})= e_{i,l+n-m}$, so clearly this basis is particularly suited for computations. With respect to this basis, $\tau_0$ identifies with $n$-th iterated Frobenius.

\begin{definition}
    Let $\Lambda_0 \subset \breve{N}$ be the lattice generated by the vectors $e_{i,l}$ for $i \in \llbracket 1,g\rrbracket$ and $l \geq 0$. 
\end{definition}

Then $\Lambda_0$ is a Dieudonn\'e lattice. Also by the Dieudonné-Manin classification, we can identify the endomorphism algebra of each $\breve N_i$ with the unique division algebra $D$ over $\Q_p$ of invariant $\frac{m}{n}$. We can uniquely extend the valuation on $\Q_p$ to a valuation $\operatorname{val}: D^\times \to \frac{1}{n}\Z$, which induces the ring of integers $\O_D$. Define the $\breve{\Q}_p$-linear automorphism $\pi: \breve{N}_i \to \breve N_i$ by setting $\pi(e_{i,l})= e_{i,l+1}$, then $\pi$ is an automorphism of isocrystals. Because it satisfies $\pi^n=p$, we see that $\pi$ is a uniformizer of $\O_D$ with valuation $\operatorname{val}(\pi) = \frac{1}{n}$. With respect to the chosen basis, we have identifications
\begin{equation}\label{eq identification J_b}
J_{b_0}(\Q_p) \simeq \Aut(\breve N,F) \simeq \GL_g(D)
\end{equation}
which we assume to be fixed from now on. Equation~\eqref{eq identification J_b} restricts to an identification
\begin{equation*}
    \Aut(\Lambda_0,F) \simeq \GL_g(\O_D).
\end{equation*}

\begin{remark}\label{remark: scalars in Aut N}
    Note that $\Aut(\breve{N},F)\subset \Aut_{\breve{\Q}_p}(\breve{N})$, which contains multiplication by scalars in $\breve{\Q}_p^\times$. However as $F$ is $\sigma$-linear, in $\Aut_{\breve{\Q}_p}(\breve{N})$ we have $\Aut(\breve N,F)\cap \breve{\Q}_p^\times=\Q_p^\times$. Therefore through identification \eqref{eq identification J_b}, only elements of $\Q_p^\times\subset \GL_g(D)$ act as genuine scalar multiplication on $\breve{N}$, even though $D$ contains an unramified subfield isomorphic to $\Q_{p^n}$.
\end{remark}

Let us now turn to the locus $\M^\circ$ where the $a$-number is equal to $1$. By Dieudonné theory, we can reinterpret the $a$-number of a $p$-divisible group $X$ in terms of its Dieudonné module $M$, namely 
\[a(X) = a(M) := \dim_k M/(FM+VM)\]
which we call the $a$-number of $M$. Let $\breve{\Z}_p\{F,V\}$ denote the non-commutative polynomial ring in variables $F$ and $V$ over $\breve{\Z}_p$. We define the Dieudonné ring for $\overline{\F}_p$ to be
\[\mathcal D := \breve{\Z}_p\{F,V\}/(a^\sigma F-Fa, aV-Va^\sigma, FV-p,VF-p),\]
where we use the notation $a^\sigma := \sigma(a)$. Then for any Dieudonné module $M$, $a(M) =1$ if and only if there exists some $v \in M$ such that $M = \mathcal D v$.

Our next goal is to describe how to decompose $\M^\circ$ into irreducible components, or equivalently into connected components. 

\begin{definition}
    A Dieudonné lattice $M \subset N$ is called a vertex lattice if it is stable under $\tau_0$ and $\tau_1$.
\end{definition}
For example, the Dieudonné lattice $\Lambda_0$ defined above is a vertex lattice. 
\begin{remark}
To define $\tau_1$, we fixed integers $a,b$ with $an+bm=1$. The notion of vertex lattice as well as the vertex lattice associated with a given $M$ that we study below do not depend on this choice. Indeed, if $\tau_1'$ is the operator defined using a possibly different choice $a'$, $b'$, then $(a-a')n+(b-b')m=0$, in other words there is an $l$ with $a-a'=lm$ and $b-b'=ln$. Hence $\tau_1$ and $\tau_1'$ differ by a power of the (well-defined) operator $\tau_0$.
\end{remark}

The following is a variant of Zink's lemma, which is an important step in the classification of isocrystals. Using this classification, we can easily deduce the variant we need.
\begin{lemma}
    Let $M \subset \breve{N}$ be a Dieudonné lattice. Then $M_\tau = \sum_{i,j \geq 0} \tau_0^i\tau_1^jM$ is the unique smallest vertex lattice in $\breve{N}$ that contains $M$. In particular, there is a natural injection 
    \[\Aut(M,F) \hookrightarrow \Aut(M_\tau,F).\]
\end{lemma}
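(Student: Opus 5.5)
The plan has four steps: show the sum defining $M_\tau$ is finite, check it is a Dieudonné lattice, check it is $\tau$-stable and minimal, and extend automorphisms.

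\textbf{Finiteness.} We need that $M_\tau = \sum_{i,j\ge 0}\tau_0^i\tau_1^jM$ is a lattice, i.e.\ only finitely many terms matter. Consider the $\Z_p$-span $\Lambda_0^{(0)}$ of the $e_{i,l}$ for $l\ge 0$. It is fixed by $\tau_0$ and $\tau_1$, and by construction $\Lambda_0 = \breve{\Z}_p\otimes_{\Z_p}\Lambda_0^{(0)}$. Both operators are semilinear, with $\tau_0(e_{i,l})=e_{i,l}$ and $\tau_1(e_{i,l})=e_{i,l+1}$. Choose $r\ge 0$ with $M\subset p^{-r}\Lambda_0$. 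Since $p^{-r}\Lambda_0$ is stable under $\tau_0$ and $\tau_1$, every term $\tau_0^i\tau_1^jM$ lies in it. Hence $M_\tau$ is a $\breve{\Z}_p$-submodule of $p^{-r}\Lambda_0$ containing $M$. Because $\breve{\Z}_p$ is noetherian, $M_\tau$ is a lattice, and the sum stabilizes after finitely many terms.

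\textbf{Dieudonné lattice.} Since $\tau_0 = p^{-m}F^n$ and $\tau_1 = p^aF^b$ are products of powers of $F$ and $p$, they commute with $F$ and with $V=pF^{-1}$ on $\breve N$. Therefore $F(\tau_0^i\tau_1^jM)=\tau_0^i\tau_1^jFM\subset\tau_0^i\tau_1^jM$, and the same holds for $V$. So $M_\tau$ is stable under $F$ and $V$. Stability under $V$ gives $pM_\tau = FVM_\tau\subset FM_\tau$, so $M_\tau$ is a Dieudonné lattice.

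\textbf{Vertex property and minimality.} The operators $\tau_0$ and $\tau_1$ commute, so $\tau_0$ and $\tau_1$ map each term $\tau_0^i\tau_1^jM$ to another term of the sum. Hence $M_\tau$ is a vertex lattice. Conversely, any vertex lattice $M'\supset M$ contains every $\tau_0^i\tau_1^jM$, so $M'\supset M_\tau$. This gives uniqueness and minimality.

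\textbf{Injection of automorphism groups.} Let $\gamma\in\Aut(M,F)$. Extend it $\breve{\Q}_p$-linearly to an automorphism of $(\breve N,F)$. Because $\gamma$ commutes with $F$ and is linear, it commutes with $\tau_0$ and $\tau_1$; the scalars $p^{-m}$ and $p^a$ lie in $\Q_p$ and are fixed by $\sigma$. Consequently $\gamma(M_\tau)=\sum\tau_0^i\tau_1^j\gamma M = M_\tau$, and restriction to $M_\tau$ gives an element of $\Aut(M_\tau,F)$. The map $\gamma\mapsto\gamma|_{M_\tau}$ is an injective group homomorphism, since $M$ spans $\breve N$.

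The only delicate step is finiteness. It relies on the existence of a $\tau$-stable lattice that can be scaled to contain $M$, namely $\Lambda_0$; this is where the isocrystal classification enters.
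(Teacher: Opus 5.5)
Your proof is correct and is essentially the paper's argument: the paper likewise chooses $c$ with $M\subseteq p^c\Lambda_0$ and uses the $\tau_0$- and $\tau_1$-stability of $\Lambda_0$ to get $M\subseteq M_\tau\subseteq p^c\Lambda_0$, so that $M_\tau$ is a lattice. The paper calls the remaining points clear (stability under $F$ and $V$, the vertex property, minimality, extension of automorphisms), and your verifications of them are accurate; the only quibble is wording, since $\Lambda_0^{(0)}$ is stable under $\tau_1$ rather than fixed by it.
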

\begin{proof}
    Let $c \in \Z$ such that $M \subseteq p^c\Lambda_0$, then because $\Lambda_0$ is stable under $\tau_0$ and $\tau_1$ we see that $M \subseteq M_\tau \subseteq p^c\Lambda_0$. Thus $M_\tau$ is again a lattice. The rest of the assertion is then clear.
\end{proof}

\begin{proposition}
    Let $S$ be the set of vertex lattices, regarded as a discrete set. Then
    \begin{enumerate}
        \item $S$ is the $J_{b_0}(\Q_p)$-orbit of $\Lambda_0$.
        \item The map $\varphi: \M^\circ(\overline{\F}_p) \to S$ sending $M$ to $M_\tau$ defines a locally constant and surjective morphism of schemes $\varphi:\M^\circ \to S$.
        \item For each $s \in S$, the inverse image $\varphi^{-1}(s)$ is an irreducible component of $\M^\circ$.
    \end{enumerate}
\end{proposition}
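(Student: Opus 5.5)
The plan is to prove the three assertions in order. Part (1) is pure semilinear algebra. Parts (2) and (3) reduce, via $J_{b_0}(\Q_p)$-equivariance, to studying the single fibre over $\Lambda_0$.

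For (1), first note that $J_{b_0}(\Q_p)=\Aut(\breve N,F)$ commutes with $F$, hence with $\tau_0$ and $\tau_1$. So every translate $j\Lambda_0$ is stable under $\tau_0,\tau_1$, and it is a Dieudonné lattice because $j$ commutes with $F$ and $V$. Conversely, let $M$ be a vertex lattice. The operator $\tau_0$ is $\sigma^n$-linear and bijective on $M$, since it has slope $0$ and $\tau_0 e_{i,l}=e_{i,l}$. By Lang's theorem (Galois descent for slope-$0$ semilinear lattices), $M=M_0\otimes_{\Z_{p^n}}\breve{\Z}_p$ with $M_0:=M^{\tau_0=1}$, a $\Z_{p^n}$-lattice in $\breve N^{\tau_0=1}$. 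The latter space is spanned by the $e_{i,l}$, and $\tau_1$ acts on it as $\pi$ twisted by $\sigma^b$. Thus $\Z_{p^n}$ and $\tau_1|_{\breve N^{\tau_0=1}}$ generate a copy of $\O_D$, using $\tau_1^n e_{i,l}=pe_{i,l}$. Hence $M_0$ is a finitely generated torsion-free module over the non-commutative discrete valuation ring $\O_D$, and is therefore free of rank $g$. A basis of $M_0$, compared with the basis of $(\Lambda_0)^{\tau_0=1}$ given by the $e_{i,0}$, yields an element of $\GL_g(D)=J_{b_0}(\Q_p)$ carrying $\Lambda_0$ to $M$.

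For (2), the key input is a uniform comparison between $M$ and $M_\tau$ when $a(M)=1$. I would write $M=\mathcal D v$ and expand $v=\sum c_{i,l}e_{i,l}$. I then show that $M_\tau=\sum_{0\le j<n}\tau_1^jM$ (powers of $\tau_0$ being unnecessary after this sum), and that $p^cM_\tau\subset M$ for a constant $c$ depending only on $(m,n,g)$. In fact I expect the index $[M_\tau:M]$ to be constant on $\M^\circ$. Given this, for a vertex lattice $\Lambda$ the condition $M\subset\Lambda$, which is equivalent to $M_\tau\subset\Lambda$, is closed. Combined with the constancy of the volume of $M_\tau$ (volume being locally constant on the connected components of $\M$) and with $M\supset p^c\Lambda$, which is an open condition, each condition $M_\tau=\Lambda$ becomes open and closed, so $\varphi$ is locally constant. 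Since $\varphi$ is $J_{b_0}(\Q_p)$-equivariant and $S$ is a single orbit by (1), surjectivity reduces to exhibiting one $M$ with $a(M)=1$ and $M_\tau=\Lambda_0$. Here I would take $v=\sum_i e_{i,l_i}$ with suitably staggered indices and check by hand that $\mathcal D v$ has $a=1$ and that its $\tau$-saturation is $\Lambda_0$.

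For (3), again by equivariance it suffices to treat $\varphi^{-1}(\Lambda_0)$. Lattices $M$ in this fibre satisfy $p^c\Lambda_0\subset M\subset\Lambda_0$. I will parametrize them by the image of a generator $v$ in $\Lambda_0/p^c\Lambda_0$, which is a finite-dimensional space of Witt-vector coordinates; the condition $a(\mathcal D v)=1$, $(\mathcal Dv)_\tau=\Lambda_0$ is open in it. Two generators give the same $M$ exactly when they differ by a unit of $\mathcal D$ acting on $v$ modulo $p^c$. The fibre is thus the quotient of an open subset of an affine space by a connected (unipotent-by-torus) group action, hence irreducible. Being open and closed in $\M^\circ$ by (2), it is an irreducible component. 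The main obstacle is the uniform bound in (2), namely the constancy of $[M_\tau:M]$ for $a=1$ lattices, together with the concrete description of the fibre needed in (3). For both I would follow the explicit normal-form computation for $a=1$ Dieudonné modules in \cite{viehmann2008modulispaces}, adapted to the basis $e_{i,l}$.
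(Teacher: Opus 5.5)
Your argument for (1) is a correct, self-contained replacement for the paper's citation of \cite[Lemma 4.10]{viehmann2008modulispaces}: you descend along the bijective $\sigma^n$-linear $\tau_0$ and use freeness of $M^{\tau_0=1}$ over the maximal order generated by $\Z_{p^n}$ and $\tau_1$. You should add one check. Send $e_{i,0}$ to your basis vectors and extend $\tau_1$-equivariantly and $\breve{\Q}_p$-linearly; the resulting map commutes with $F$ because it commutes with $\tau_0,\tau_1$ and $F=\tau_1^m\tau_0^{a}$.

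The trouble is in (2). The identity $M_\tau=\sum_{0\le j<n}\tau_1^jM$ is false as soon as $g\ge 2$. Since $\tau_1\Lambda_0=\Lambda_1$, every $\tau_1^jM$ with $j\ge 1$ lies in $\Lambda_1$. So the right-hand side has the same image in $\Lambda_0/\Lambda_1\cong\overline{\F}_p^{\,g}$ as $M$, namely the line spanned by $[v]_1$. But $M_\tau=\Lambda_0$ on the fibre over $\Lambda_0$. The powers of $\tau_0$ are exactly what fill level $0$: the classes $[\tau_0^kv]_1=\sum_i a_{i,0}^{\sigma^{kn}}[e_{i,0}]_1$ for $0\le k<g$ form a basis of $\Lambda_0/\Lambda_1$ precisely because of the $\F_{p^n}$-independence (Moore determinant) condition of Lemma~\ref{Lemma: a=1 dieudonne lattices generators}. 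So neither your uniform bound nor the constancy of $[M_\tau:M]$ can come from that identity. Their real source is this characterization of the fibre, \cite[Lemma 4.8]{viehmann2008modulispaces}. From it one gets $\dim M_j/M_{j+1}=\min(|\mathcal B_j|,g)$ and $\Lambda_N\subseteq M$ with $N=gm(n-m)-n+1$, as in Lemmas~\ref{Lemma: basis of level s} and~\ref{Lemma value PiN subset M}, hence a constant index.

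Two further steps fail as written. First, your surjectivity witness $v=\sum_i e_{i,l_i}$ gives no point of $\M^\circ$ when $g\ge 2$. Such a $v$ is $\tau_0$-fixed, and $F=\tau_1^m\tau_0^a$, $V=\tau_1^{n-m}\tau_0^{-a-b}$ and $\tau_1^n=p\tau_0^b$. Hence $\mathcal{D}v$ lies in the $\breve{\Z}_p$-span of the $\tau_1^kv$ with $0\le k<n$, which has rank at most $n<gn$, so $\mathcal{D}v$ is not a lattice. In terms of Lemma~\ref{Lemma: a=1 dieudonne lattices generators}, its level-$0$ coefficients lie in $\{0,1\}$ and are $\F_{p^n}$-dependent. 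No witness is actually needed: $\M^\circ$ is non-empty and $J_{b_0}(\Q_p)$-stable, $\varphi$ is equivariant, and $S$ is one orbit by (1). An explicit point is $\mathcal{D}v$ with $v=\sum_i[x_i]e_{i,0}$ and $x_1,\dots,x_g$ independent over $\F_{p^n}$.

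Second, the condition $M\supseteq p^c\Lambda$ is closed, not open. Like $M\subseteq\Lambda$, it says that a quasi-isogeny is an isogeny. Even if it were open, openness of $\{M_\tau=\Lambda\}$ would not follow from it. Use the constant index $d$ instead. On the open and closed part of $\M^\circ$ where the height of $\rho$ is fixed, the fibres of $\varphi$ are the sets $\{M\subseteq\Lambda\}$ for the vertex lattices $\Lambda$ of the matching volume. These sets are closed, pairwise disjoint and cover that part. They also form a locally finite family: on a quasi-compact open all $M$ lie between $p^{c}\Lambda_0$ and $p^{-c}\Lambda_0$, and only finitely many $\tau_0$-stable lattices of fixed volume contain $p^c\Lambda_0$. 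So each fibre is also open.

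Finally, in (3) the connected group action plays no role. What must be shown is that $\underline a\mapsto\mathcal{D}v(\underline a)$ is a morphism of schemes from the irreducible open parameter set onto the fibre, after passing to perfections since $V$ is $\sigma^{-1}$-linear. Irreducibility is then immediate. This, together with the constant-index input, is what \cite[Section 4.4]{viehmann2008modulispaces} provides and what the paper simply cites.
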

\begin{proof}
    The first assertion is \cite[Lemma 4.10]{viehmann2008modulispaces} and the other assertions follow from \cite[Section 4.4]{viehmann2008modulispaces}.
\end{proof}
In this way, vertex lattices parametrize the irreducible components of $\M^\circ$ and $J_{b_0}(\Q_p)$ acts transitively on the set of irreducible components. We denote the irreducible component of $\M^\circ$ labelled by the vertex lattice $\Lambda_0$ by $\mathcal C_{\Lambda_0}$. 

Clearly the action by $J_{b_0}(\Q_p)$ does not change the automorphism group of the $p$-divisible group, at least up to isomorphism. Thus in order to prove Theorems \ref{thmmain1} and \ref{thmmain3}, we are left to study the generic automorphism group of the universal $p$-divisible group over $\mathcal C_{\Lambda_0}$. Put differently, we are going to do explicit calculations with Dieudonné lattices $M \subset \breve{N}$ that satisfy $a(M)=1$ and $M_\tau = \Lambda_0$, in particular this implies that
\[\Aut(M,F) \hookrightarrow \Aut(\Lambda_0,F) = \GL_g(\O_D).\] The aim is to identify the generic automorphism group as a subgroup of $\GL_g(\O_D)$.

\subsection{Explicit coordinates for \texorpdfstring{$\mathcal C_{\Lambda_0}$}{C(Lambda0)}}
In this subsection, we describe explicit coordinates for the irreducible component $\mathcal C_{\Lambda_0}$.

Every element $v \in \Lambda_0$ admits a unique decomposition $v= \sum_{i=1}^g \sum_{l \geq0} [a_{i,l}] e_{i,l}$, where $a_{i,l}\in \overline{\F}_p$, where $[\cdot]$ denotes the Teichmüller lift, and where the sum converges in the $p$-adic topology. The following lemma is a characterization of vectors $v \in \Lambda_0$ that generate a Dieudonné lattice $\mathcal D \cdot v\in \mathcal C_{\Lambda_0}(\overline{\F}_p)$.

\begin{lemma}\label{Lemma: a=1 dieudonne lattices generators}
Let $v\in \Lambda_0$ as above. The following conditions are equivalent:
\begin{enumerate}
    \item The $\breve{\Z}_p$-module $\D\cdot v$ is an element of $\mathcal C_{\Lambda_0}(\overline{\F}_p)$.
    \item The elements $a_{1,0},...,a_{g,0} \in \overline{\F}_p$ are linearly independent over $\F_{p^n}$.
    \item For some $n_0\in \Z$, the matrix $(a_{i,0}^{\sigma^{n_0+jn}})_{i,j\in\llbracket1,g\rrbracket}$ is invertible.
    \item For any $n_0\in \Z$, the matrix $(a_{i,0}^{\sigma^{n_0+jn}})_{i,j\in\llbracket1,g\rrbracket}$ is invertible.
\end{enumerate}
\end{lemma}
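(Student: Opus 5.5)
The plan is to reduce condition (1) to a statement about the reduction of $v$ modulo the sublattice
\[\Lambda_0' := \textstyle\sum_{i}\sum_{l\geq 1}\breve\Z_p e_{i,l}\subset\Lambda_0,\]
and then to identify that statement with a Moore-determinant criterion. Note that $\Lambda_0/\Lambda_0'\simeq\overline{\F}_p^g$ via the coordinates $(a_{1,0},\dots,a_{g,0})$.

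\emph{Step 1: what (1) means.} Since $\Lambda_0$ is a Dieudonné lattice, $\D\cdot v\subset\Lambda_0$ is a finitely generated $\breve\Z_p$-submodule stable under $F$ and $V$. If it has full rank, it is a Dieudonné lattice with $a$-number $1$, because it is generated by $v$. The vertex lattice $(\D v)_\tau$ contains $v$ and is contained in $\Lambda_0$. For the rank, observe that $(\D v)\otimes\Q$ is an $F$-stable sub-isocrystal of $\breve N$ containing $v$. Hence it is stable under $\tau_0=p^{-m}F^n$ and $\tau_1=p^aF^b$, so it contains $(\sum_{i,j}\tau_0^i\tau_1^j\breve\Z_p v)\otimes\Q$. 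Consequently, (1) holds if and only if the smallest vertex lattice $\Lambda_v:=\sum_{i,j\ge0}\tau_0^i\tau_1^j\breve\Z_p v$ containing $v$ equals $\Lambda_0$; if so, $\D v$ automatically has full rank.

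\emph{Step 2: reduction modulo $\Lambda_0'$.} We have $\tau_1([c]e_{i,l})=[c^{\sigma^b}]e_{i,l+1}$, so $\tau_1(\Lambda_0)\subset\Lambda_0'$. Also $\tau_0$ acts on coordinates by $\sigma^n$ and preserves the index $l$. I then claim that $\Lambda_v=\Lambda_0$ if and only if the images of the vectors $\tau_0^jv$, $j\ge0$, span $\Lambda_0/\Lambda_0'$ over $\overline{\F}_p$. These images are the vectors $w_j=(a_{i,0}^{\sigma^{jn}})_{i}$.

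Necessity is clear, since every $\tau_0^i\tau_1^j v$ with $j\ge1$ lies in $\Lambda_0'$.

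For sufficiency, suppose the $w_j$ span. Then $\Lambda_v+\Lambda_0'=\Lambda_0$, so $\Lambda_v$ contains elements $f_i\equiv e_{i,0}\pmod{\Lambda_0'}$. Applying $\tau_1^l$ gives $\tau_1^lf_i\equiv e_{i,l}$ modulo $\sum_{l'>l}\breve\Z_pe_{i',l'}$, up to a unit Teichmüller twist. A successive-approximation (Nakayama-type) argument along the filtration by $l$ then shows $\Lambda_v\supset\Lambda_0$, using $p$-adic completeness and $e_{i,l+n}=pe_{i,l}$. I expect this filtration argument to be the main technical point. The semilinearity of $\tau_1$ is harmless here, because Teichmüller coefficients are simply Frobenius-twisted.

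\emph{Step 3: the linear algebra.} The remaining question is whether the vectors $w_j=x^{\sigma^{jn}}$, with $x=(a_{i,0})_i$, span $\overline{\F}_p^g$. I would use the standard Moore-determinant facts for the $\F_{p^n}$-Frobenius $\sigma^n$.

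First, the span $W$ of all the $w_j$ is $\sigma^n$-stable. By Galois descent (Lang), $W$ is therefore defined over $\F_{p^n}$. Thus $W\ne\overline{\F}_p^g$ if and only if there is a nonzero $\lambda\in\F_{p^n}^g$ with $\sum\lambda_ia_{i,0}=0$. This gives (1)$\Leftrightarrow$(2).

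Second, the chain of spans of $w_0,\dots,w_{k}$ stabilizes as soon as one step fails to increase it. So full span is equivalent to invertibility of the $g\times g$ matrix $(a_{i,0}^{\sigma^{jn}})$ for $j\in\llbracket 0,g-1\rrbracket$, equivalently $j\in\llbracket1,g\rrbracket$. Applying the automorphism $\sigma^{n_0}$ of $\overline{\F}_p$ entrywise preserves invertibility, and the span condition in the first fact is also invariant under $\sigma^{n_0}$. This shows that the shifted matrices in (3) and (4) are invertible either for all $n_0$ or for none. That gives (2)$\Leftrightarrow$(3)$\Leftrightarrow$(4).
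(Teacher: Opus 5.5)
Your proof is correct. It takes a more self-contained route than the paper, which gives no argument of its own: it cites \cite[Lemma 4.8]{viehmann2008modulispaces} for (1)$\Leftrightarrow$(2) and invokes Dedekind's independence of characters for the remaining equivalences.

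You reprove the first equivalence by noting that $F$, $V$ and $\tau_1$ all carry $\Lambda_0$ into $\Lambda_0'=\tau_1\Lambda_0$ (because $1\le m\le n-1$). So only the $\tau_0$-orbit of $v$ modulo $\Lambda_0'$ can matter, i.e.\ the vectors $w_j=(a_{i,0}^{\sigma^{jn}})_i$. Your successive approximation is just $\Lambda_0=\Lambda_v+\tau_1\Lambda_0=\Lambda_v+\tau_1^2\Lambda_0=\dots=\Lambda_v+\tau_1^n\Lambda_0=\Lambda_v+p\Lambda_0$ followed by Nakayama, using $\tau_1\Lambda_v\subseteq\Lambda_v$ and $\tau_1^n=p\tau_0^b$. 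For the linear algebra you replace Dedekind's theorem by two ingredients: Galois descent for $\sigma^n$-stable subspaces of $\overline{\F}_p^g$, and the stabilization of the chain of spans of $w_0,\dots,w_k$. Together these are the standard proof of the Moore-determinant criterion. Your version makes transparent why only the level-$0$ coordinates $a_{i,0}$ enter; the paper's buys brevity.

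One small point to tighten. In Step 1 you call $\Lambda_v$ ``the smallest vertex lattice containing $v$''. This presupposes that $\Lambda_v$ is $F$- and $V$-stable, and that is exactly what the direction (1)$\Rightarrow\Lambda_v=\Lambda_0$ needs, to get $(\D v)_\tau\subseteq\Lambda_v$. It is true, since $F=\tau_0^a\tau_1^m$ and $V=\tau_0^{-a-b}\tau_1^{n-m}$, and a finitely generated $\tau_0$-stable submodule $L\subseteq\Lambda_0$ satisfies $\tau_0L=L$ because $\tau_0$ has slope $0$. But you do not prove it.

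You can avoid it altogether. If $(\D v)_\tau=\Lambda_0$, every generator $\tau_0^i\tau_1^jF^{c}V^{d}v$ of $(\D v)_\tau$ with $(j,c,d)\neq(0,0,0)$ lies in $\Lambda_0'$. Hence the images of the $\tau_0^iv$ alone must span $\Lambda_0/\Lambda_0'$, which is precisely your spanning condition.
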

\begin{proof}
    The first equivalence is proven in \cite[Lemma 4.8]{viehmann2008modulispaces}. The other equivalences follow from Dedekind's theorem on the independence of characters (see \cite[Corollary 5.16]{milne2022fields}).
\end{proof}

Let $\Lambda_\bullet$ be the filtration of $\Lambda_0$ defined by $\Lambda_j = \pi^j\Lambda_0$ for every $j \geq 0$, and for any Dieudonné lattice $M \in \mathcal C_{\Lambda_0}(\overline{\F}_p)$ we let $M_\bullet = M \cap \Lambda_\bullet$. For $s \in \Z_{\geq 1}$, we consider the truncation map
\[[\cdot]_s: \Lambda_0 \to \Lambda_0/\Lambda_s.\]
Let us also define $\mathcal B_j = \{(a,b) \in \Z_{\geq0}^2 \mid am+b(n-m)=j\}$ for $j \geq 0$. We may enumerate the elements of $\mathcal B_j$ as $(a_1,b_1),...,(a_{g'},b_{g'})$ such that $a_1 < a_2 < \cdots < a_{g'}$, where $g' = |\mathcal B_j|$. Then $(a_l,b_l) = (a_1+(l-1)(n-m),b_1-(l-1)m)$ and 
\begin{align}\label{eq differences in B_j enumerated}
    a_l-b_l = a_1-b_1+(l-1)n.
\end{align}
for all $l \in \llbracket1,g'\rrbracket$.

\begin{lemma}\label{Lemma: basis of level s}
    Let $v\in \Lambda_0$ be as in Lemma \ref{Lemma: a=1 dieudonne lattices generators} and let $M= \mathcal D\cdot v \in \mathcal C_{\Lambda_0}(\overline{\F}_p)$. Then for all $j \geq 0$, the $\overline{\F}_p$-vector space $M_{j}/M_{j+1}$ is generated by the family of vectors $\{[F^aV^bv]_{j+1}\}_{(a,b)\in \mathcal B_j}$. If $j < gm(n-m)$, this family is in fact a basis.
\end{lemma}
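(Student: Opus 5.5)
The plan is to compare $M$ with the ``expected'' filtration coming from the monomials $F^aV^bv$. Set $\deg(a,b):=am+b(n-m)$. For $j\ge 0$ put
\[P_j:=\overline{\textstyle\sum_{\deg(a,b)\ge j}\breve{\Z}_p\,F^aV^bv}\subseteq M.\]
Since $FV=VF=p$, every element of $\mathcal D$ is a convergent $\breve{\Z}_p$-combination of the $F^aV^b$. Hence $P_0=M$. Because $F\Lambda_l\subseteq\Lambda_{l+m}$ and $V\Lambda_l\subseteq\Lambda_{l+n-m}$, we also have $P_j\subseteq M_j$.

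The first step is to compute leading terms. Since $F([x]e_{i,l})=[x^\sigma]e_{i,l+m}$ and $V([x]e_{i,l})=[x^{\sigma^{-1}}]e_{i,l+n-m}$, we get, for $(a,b)\in\mathcal B_j$,
\[[F^aV^bv]_{j+1}=\sum_{i=1}^g a_{i,0}^{\sigma^{a-b}}\,e_{i,j}\in\Lambda_j/\Lambda_{j+1}\cong\overline{\F}_p^{\,g}.\]
By \eqref{eq differences in B_j enumerated}, the exponents $a_l-b_l$ for $(a_l,b_l)\in\mathcal B_j$ form an arithmetic progression $a_1-b_1+(l-1)n$. These vectors are therefore columns of the matrix in Lemma~\ref{Lemma: a=1 dieudonne lattices generators}(4), taken with a suitable shift. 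Two consequences follow.
\begin{itemize}
\item[(i)] If $|\mathcal B_j|\le g$, the vectors are linearly independent, since they are columns of an invertible $g\times g$ matrix.
\item[(ii)] If $|\mathcal B_j|\ge g$, any $g$ consecutive ones already span $\Lambda_j/\Lambda_{j+1}$.
\end{itemize}
An elementary count gives $|\mathcal B_j|\le\lfloor j/(m(n-m))\rfloor+1$, because consecutive elements differ by $(n-m,-m)$. So for $j<gm(n-m)$ we are in case (i), which is the ``basis'' part once generation is shown. Conversely, the standard Frobenius-coin count for coprime $m,n-m$ gives a threshold $J$ such that $|\mathcal B_j|\ge g$ for all $j\ge J$, and one can take $J\le gm(n-m)$. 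Then (ii) together with $p$-adic completeness gives $\Lambda_J\subseteq P_J$, by successive approximation along $\Lambda_J\supset\Lambda_{J+1}\supset\cdots$.

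The main step is to show $M_j=P_j$ for all $j$; generation then follows at once, since $P_j=\operatorname{span}\{F^aV^bv\}_{(a,b)\in\mathcal B_j}+P_{j+1}$ and $P_{j+1}\subseteq M_{j+1}$. For $j\ge J$ this is immediate, because $M_j\subseteq\Lambda_j\subseteq\Lambda_J\cap\Lambda_j\subseteq P_j$, using that $P_J\cap\Lambda_j$ is handled by the same approximation argument.

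For $j<J$, take $w\in M_j$ and let $j'$ be maximal with $w\in P_{j'}$; suppose $j'<j$. Write $w\equiv\sum_{(a,b)\in\mathcal B_{j'}}c_{a,b}F^aV^bv \bmod P_{j'+1}$ with not all $c_{a,b}\in p\breve{\Z}_p$. Reducing modulo $\Lambda_{j'+1}$ gives a nontrivial linear relation among the leading terms, since $w\in\Lambda_j\subseteq\Lambda_{j'+1}$. If $j'<gm(n-m)$, this contradicts (i). Otherwise $j'\ge gm(n-m)\ge J$, so $w\in\Lambda_{j'+1}\subseteq P_{j'+1}$, which contradicts maximality of $j'$. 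Hence $M_j=P_j$.

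I expect the main obstacle to be the bookkeeping in this cancellation step, namely making sure the two regimes meet, i.e.\ $J\le gm(n-m)$. If the coin bound turns out to be slightly weaker, I would instead use (ii) directly at level $j'$, where $|\mathcal B_{j'}|>g$ because a dependence exists. That lets me rewrite the leading part using only $g$ consecutive monomials and absorb the rest into $P_{j'+1}$, by an induction on the excess $|\mathcal B_{j'}|-g$.
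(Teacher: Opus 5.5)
Your proposal is correct and is essentially the paper's argument. Both filter $M$ by the degree $am+b(n-m)$ of the monomials $F^aV^bv$. Both use Lemma~\ref{Lemma: a=1 dieudonne lattices generators} to get linear independence of the leading terms when $|\mathcal B_j|\le g$, so that any representation of an element of $M_j$ can be pushed up to degree $j$; coefficients in $p\breve{\Z}_p$ are absorbed via $pF^aV^b=F^{a+1}V^{b+1}$. Both then use spanning of the leading terms once $|\mathcal B_j|\ge g$.

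Your maximal-$j'$ formulation makes explicit what the paper compresses into ``by induction hypothesis, $s=j$''. The fallback in your last paragraph is unnecessary: counting the admissible residue class of $a$ modulo $n-m$ in $[0,j/m]$ gives $|\mathcal B_j|\ge\lfloor j/(m(n-m))\rfloor$, and this already yields $J\le gm(n-m)$.
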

\begin{proof}
    For any $s \geq 1$, $\ker([\cdot]_s)$ is generated by the vectors $\{e_{i,l} \mid i \in \llbracket 1,g\rrbracket, l \geq s\}$ and $[\cdot]_s$ maps $M$ to $M/M_s$. Also, $\Lambda_j/\Lambda_{j+1}$ is an $\overline{\F}_p$-vector space with basis $([e_{1,j}]_{j+1},...,[e_{g,j}]_{j+1})$.

    Since $F^aV^be_{i,l} = e_{i,l+am+b(n-m)}$, we see that for any $s \geq 1$.
    \begin{equation*}
    a+b \geq s \implies F^aV^b(\Lambda_0)\subset \ker([\cdot]_s).\end{equation*}
    This implies that $M/M_1$ is freely generated over $\overline{\F}_p$ by $[v]_1$.

    We first prove the statement via induction for $j \in \llbracket 1,gm(n-m)\rrbracket$, with induction hypothesis that for all $j'\in \llbracket 0,j-1\rrbracket$ the $\overline{\F}_p$-vector space $M_{j'}/M_{j'+1}$ is freely generated by the vectors $[F^aV^bv]_{j'+1}$.

    Any $u \in M_j \setminus M_{j+1}$ admits a decomposition
    \begin{equation}\label{eq elements of Mj}
        u = \sum_{a,b\geq 0} u_{a,b}F^aV^bv = \sum_{i=1}^g \sum_{a,b,l\geq 0} u_{a,b} [a_{i,l}]^{\sigma^{a-b}}e_{i,l+am+b(n-m)}
    \end{equation}
    with $u_{a,b}\in\breve{\Z}_p$ for all $a,b\geq 0$. Let $s = \min \{am+b(n-m) \mid u_{ab} \neq 0\}$, then $u \in M_s$ and $s \leq j$. By induction hypothesis, we have $s=j$. Thus truncating~\eqref{eq elements of Mj} at level $j+1$, we get
    \[[u]_{j+1} = \sum_{(a,b) \in \mathcal B_j} \overline{u}_{a,b}[F^aV^bv]_{j+1},\]
    which proves that the vectors $[F^aV^bv]_{j+1}$ generate $M_j/M_{j+1}$. These elements are linearly independent if $j<gm(n-m)$. Indeed, we enumerate the elements of $\mathcal B_j$ as before and deduce from \eqref{eq differences in B_j enumerated} that
    \[[F^{a_k}V^{b_k}v]_{j+1}=\sum_{i=1}^g a_{i,0}^{\sigma^{a_k-b_k}}[e_{i,j}]_{j+1} = \sum_{i=1}^g a_{i,0}^{\sigma^{a_1-b_1+(k-1)n}} [e_{i,j}]_{j+1}.\]
    Lemma~\ref{Lemma: a=1 dieudonne lattices generators} implies that these vectors are linearly independent over $\overline{\F}_p$ if and only if $|\mathcal B_j|\leq g$, which is the case whenever $j<gm(n-m)$. 

    Using Lemma~\ref{Lemma: a=1 dieudonne lattices generators}, one also sees immediately that for $j \geq gm(n-m)$, the vectors $\{[F^aV^bv]_{j+1}\}_{(a,b) \in \mathcal B_j}$ generate $M_j/M_{j+1}$, which actually equals $\Lambda_j/\Lambda_{j+1}$ due to $|\mathcal B_j| \geq g$.
\end{proof}

\begin{lemma}\label{Lemma value PiN subset M}
Let $N = gm(n-m)-n+1$. Then $\Lambda_N \subset M$ is the largest vertex lattice contained in $M$. In particular, $N$ is the smallest integer satisfying $\Lambda_N \subset M$.
\end{lemma}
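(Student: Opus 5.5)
The plan is to use Lemma~\ref{Lemma: basis of level s} to compute $\dim_{\overline{\F}_p} M_j/M_{j+1}=|\mathcal B_j|$ for $j$ near $N$, and then to combine this with the rationality properties of vertex lattices. Throughout, $M=\mathcal D\cdot v$ with $a_{1,0},\dots,a_{g,0}$ linearly independent over $\F_{p^n}$.

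\emph{Step 1: counting $\mathcal B_j$.} Since $\gcd(m,n)=1$, also $\gcd(m,n-m)=1$. The Frobenius number of the pair $(m,n-m)$ is $m(n-m)-m-(n-m)=m(n-m)-n$. Two solutions of $am+b(n-m)=j$ differ by a multiple of $(n-m,-m)$, which shifts the value of $am$ by multiples of $m(n-m)$. Hence $|\mathcal B_j|\ge g$ holds exactly when $j-(g-1)m(n-m)$ is representable as $am+b(n-m)$ with $a,b\ge 0$. In particular this holds for every $j\ge (g-1)m(n-m)+m(n-m)-n+1=N$. For $j=N-1$ we get $j-(g-1)m(n-m)=m(n-m)-n$, which is not representable, so $|\mathcal B_{N-1}|=g-1$. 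Moreover $N-1<gm(n-m)$, so by Lemma~\ref{Lemma: basis of level s} we get $\dim M_{N-1}/M_N=g-1$.

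\emph{Step 2: $\Lambda_N\subset M$ and $\Lambda_{N-1}\not\subset M$.} Take $j\ge N$. By Step 1 there are at least $g$ vectors $[F^aV^bv]_{j+1}$, $(a,b)\in\mathcal B_j$. By \eqref{eq differences in B_j enumerated} their coordinates are $a_{i,0}^{\sigma^{a_1-b_1+(k-1)n}}$. Any $g$ of them with consecutive $k$ are linearly independent by Lemma~\ref{Lemma: a=1 dieudonne lattices generators}(4), so they span $\Lambda_j/\Lambda_{j+1}$. This gives $\Lambda_j\subset M+\Lambda_{j+1}$ for all $j\ge N$. Since $p^c\Lambda_0\subset M$ for some $c$, and $\Lambda_{N+nc}=p^c\Lambda_N$, iterating finitely many times yields $\Lambda_N\subset M$. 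Conversely, if $\Lambda_{N-1}\subset M$, then $M_{N-1}/M_N=\Lambda_{N-1}/\Lambda_N$ would have dimension $g$, contradicting Step 1.

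\emph{Step 3: maximality, which I expect to be the main obstacle.} The $\Lambda_j=\pi^j\Lambda_0$ are vertex lattices because $\pi$ commutes with $F$, $\tau_0$ and $\tau_1$. A sum of vertex lattices contained in $M$ is again a vertex lattice contained in $M$. So it suffices to show that a vertex lattice $L$ with $\Lambda_N\subseteq L\subseteq M$ equals $\Lambda_N$.

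Suppose $L\supsetneq\Lambda_N$. Then $L$ is stable under $\pi$: vertex lattices are $J_{b_0}(\Q_p)$-translates of $\Lambda_0$, hence are $\O_D$-stable, since $\pi$ normalizes $\GL_g(\O_D)$. Applying suitable powers of $\pi$ to an element of $L\setminus\Lambda_N$, I will deduce that $W:=(L\cap\Lambda_{N-1})/\Lambda_N\neq 0$. On $\Lambda_{N-1}/\Lambda_N$ the operator $\tau_0$ acts as $\sigma^n$ on the coordinates. Since $W$ is $\tau_0$-stable, $W$ is defined over $\F_{p^n}$ and contains a nonzero vector $w$ with coordinates in $\F_{p^n}$.

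Now $w$ lies in $M_{N-1}/M_N=\operatorname{span}(u_0,\dots,u_{g-2})$, where $u_l$ is the vector with coordinates $(a_{i,0}^{\sigma^{k+ln}})_i$. Applying $\sigma^n$ fixes $w$ and shifts the span to $\operatorname{span}(u_1,\dots,u_{g-1})$. Since $u_0,\dots,u_{g-1}$ are independent by Lemma~\ref{Lemma: a=1 dieudonne lattices generators}, the intersection of the two spans is $\operatorname{span}(u_1,\dots,u_{g-2})$. Iterating this argument shrinks the span containing $w$ to zero, which contradicts $w\neq 0$.

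Hence $\Lambda_N$ is the largest vertex lattice contained in $M$. Minimality of $N$ then follows from Step 2 together with $\Lambda_{j}\supseteq\Lambda_{N-1}$ for $j<N$.
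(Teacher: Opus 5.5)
Your argument is essentially the paper's proof: you count $|\mathcal B_j|$ near $N$, show $\Lambda_j\subset M+\Lambda_{j+1}$ for $j\ge N$, then use $\tau_0$-descent to an $\F_{p^n}$-rational vector in $M_{N-1}/M_N$ and the independence of the Frobenius twists $u_0,\dots,u_{g-1}$. However, one step in Step 3 is false as stated.

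\textbf{The false step.} For $g\ge 2$, vertex lattices are in general \emph{not} stable under $\pi$. A translate $\gamma\Lambda_0$ with $\gamma\in\GL_g(D)$ is $\pi$-stable if and only if $\gamma^{-1}\pi\gamma\in\Mat_g(\O_D)$. The fact that $\pi$ normalizes $\GL_g(\O_D)$ says nothing about this, because $\pi$ is not central in $\Mat_g(D)$.

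Here is a concrete counterexample. Take $g\ge 2$, $c\in\F_{p^n}\setminus\F_p$, and $w=e_{1,0}+[c]e_{2,0}$. The lattice $L=\Lambda_2+\breve{\Z}_p w+\breve{\Z}_p\tau_1 w$ is stable under $\tau_0^{\pm1}$ (which fixes $w$ and $\tau_1 w$) and under $\tau_1$. Hence it is stable under $F$ and $V$, so it is a vertex lattice. Since $pw\in\Lambda_n\subset\Lambda_2$, the space $(L\cap\Lambda_1)/\Lambda_2$ is the line spanned by the class of $\tau_1 w=e_{1,1}+[c]^{\sigma^b}e_{2,1}$. This line does not contain the class of $\pi w=e_{1,1}+[c]e_{2,1}$, because $\gcd(b,n)=1$ forces $c^{\sigma^b}\neq c$.

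\textbf{The repair.} Replace $\pi$ by $\tau_1$, as the paper does. $\tau_1$ is part of the definition of a vertex lattice, and it acts as the $\sigma^b$-semilinear shift $e_{i,l}\mapsto e_{i,l+1}$. So it maps $\Lambda_j\setminus\Lambda_{j+1}$ into $\Lambda_{j+1}\setminus\Lambda_{j+2}$. Hence if $x\in L\setminus\Lambda_N$ lies in $\Lambda_j\setminus\Lambda_{j+1}$ with $j\le N-1$, then $\tau_1^{N-1-j}x$ gives a nonzero class in $W$. This is exactly the paper's observation that $j\mapsto\dim M'_j/M'_{j+1}$ is non-decreasing for a vertex lattice $M'$.

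With this change your proof is complete. The remaining differences from the paper are minor:
\begin{itemize}
\item You phrase the count through the Frobenius number of $(m,n-m)$.
\item You prove $\Lambda_N\subset M$ by a direct spanning argument instead of the five lemma.
\item You get minimality of $N$ directly from $\dim M_{N-1}/M_N=g-1$, rather than from maximality.
\item You reduce to $\Lambda_N\subseteq L\subseteq M$ by taking sums of vertex lattices.
\item Your shrinking-spans argument is a direct proof of the case of Lemma~\ref{Lemma: independence over F_p^n} that is actually needed.
\end{itemize}
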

\begin{proof}
    We claim that for all $i \in \llbracket 1,n-1\rrbracket$ we have $|\mathcal B_{gm(n-m)-i}| = g$. Indeed, by Bezout's identity there are $c_i,d_i \in \Z$ such that
        \[c_im+d_i(n-m) = -i.\]
    Replacing $c_i$ by $c_i-\varepsilon(n-m)$ and $d_i$ by $d_i+\varepsilon m$ for suitable $\varepsilon \in \Z$, we may assume that $-(n-m) \leq c_i < 0$ and $0 \leq d_i \leq m$. Then $\mathcal B_{gm(n-m)-i}$ contains precisely the $g$-many tuples 
    \[((g-k)(n-m)+c_i, km+d_i) \in \mathcal B_{gm(n-m)-i} \text{ for } k \in \llbracket 0,g-1\rrbracket.\]

    Thus $\dim_{\overline{\F}_p} M_j/M_{j+1} = g$ is maximal for all $j \in \llbracket N,N+n-1\rrbracket$, by Lemma \ref{Lemma: basis of level s}. From $F(\Lambda_j) \subset \Lambda_{j+m}$ and $V(\Lambda_j) \subset \Lambda_{j+n-m}$, we conclude that $\dim_{\overline{\F}_p} M_j/M_{j+1}=g$ for all $j \geq N$. Since $M$ is a lattice, there exists an $\tilde{N} \gg 0$ such that $\Lambda_{\tilde N} \subset M$, or equivalently $\Lambda_{\tilde{N}} = M_{\tilde{N}}$. We now argue by induction, assume $\Lambda_n = M_n$ holds for some $n \in \llbracket N+1,\tilde{N}\rrbracket$ and consider the diagram
    \[\begin{tikzcd}
	0 & {M_{n}} & {M_{n-1}} & {M_{n-1}/M_n} & 0 \\
	0 & {\Lambda_n} & { \Lambda_{n-1}} & {\Lambda_{n-1}/\Lambda_n} & 0
	\arrow[from=1-1, to=1-2]
	\arrow[from=1-2, to=1-3]
	\arrow["{=}"', from=1-2, to=2-2]
	\arrow[from=1-3, to=1-4]
	\arrow[hook', from=1-3, to=2-3]
	\arrow[from=1-4, to=1-5]
	\arrow[hook', from=1-4, to=2-4]
	\arrow[from=2-1, to=2-2]
	\arrow[from=2-2, to=2-3]
	\arrow[from=2-3, to=2-4]
	\arrow[from=2-4, to=2-5]
    \end{tikzcd}\]
    By dimension count the right-hand arrow is an equality, thus by the 5-lemma $\Lambda_{n-1}=M_{n-1}$. Thus inductively $\Lambda_N = M_N$, and so $\Lambda_N \subset M$.

    Clearly $\Lambda_N$ is a vertex lattice, and in fact it is the largest vertex lattice contained in $M$. Indeed, we claim that $(M_j/M_{j+1}) \cap \F_{p^n}^g = \{0\}$ inside $\Lambda_j/\Lambda_{j+1} \cong \overline{\F}_{p}^g$ for any $j \geq0$ such that $|\mathcal B_j| < g$. Assuming this claim, let $M' \subset  M$ be a vertex lattice and set $M'_j = M' \cap \Lambda_j$. Since $M'$ is $\tau_1$-stable, the function $j \mapsto \dim_{\overline{\F}_p} M'_j/M'_{j+1}$ is non-decreasing. Furthermore, by our claim $\dim_{\overline{\F}_p} M_j'/M'_{j+1} =0$ for all $j \geq 0$ such that $|\mathcal B_j| < g$, because $M'$ is $\tau_0$-stable and thus generated by $\tau_0$-stable elements. Together this shows that $M'_j/M'_{j+1}=0$ for all $j \leq gm(n-m)-n$, which implies $M' \subset \Lambda_N$ by Lemma \ref{Lemma: basis of level s}.

    Finally, the claim follows from Lemma \ref{Lemma: independence over F_p^n} below by enumerating $\mathcal B_j$ as before as $(a_k,b_k) \in \mathcal B_j$, setting $x_i = a_{i,0}^{\sigma^{a_1-b_1}}$ and using \eqref{eq differences in B_j enumerated}.
\end{proof}

\begin{lemma}[cf.~\cite{viehmann2026oort}, Lemma 4.5]\label{Lemma: independence over F_p^n}
    Let $x_1,...,x_g \in \overline{\F}_p$ be linearly independent over $\F_{p^n}$, and let $\varepsilon_1,...,\varepsilon_g \in \F_{p^n}$. Let $k \leq g-2$ and consider the $g \times (k+2)$-matrix

\begin{align}\label{matrix: F_p^n minors}
    \begin{pmatrix}
        x_1 & x_1^{\sigma^n} & \cdots & x_1^{\sigma^{kn}} &\varepsilon_1 \\
        x_{2} & x_2^{\sigma^{n}} & \cdots & x_2^{\sigma^{kn}} &\varepsilon_2 \\
        \cdots & \cdots & \cdots & \cdots & \cdots \\
        x_g & x_g^{\sigma^{n}} & \cdots & x_g^{\sigma^{kn}} &\varepsilon_g 
    \end{pmatrix}
\end{align}
Then the rank of this matrix is not maximal if and only if $\varepsilon_i =0$ for all $i=1,\dots,g$.
\end{lemma}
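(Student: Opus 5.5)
The matrix has $g$ rows and $k+2 \le g$ columns, so "maximal rank" means rank $k+2$, i.e.\ the columns are linearly independent over $\overline{\F}_p$. The plan is to prove the two directions separately.

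The easy direction is immediate. If all $\varepsilon_i=0$, the last column vanishes and the rank is at most $k+1<k+2$.

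For the converse, assume the columns are linearly dependent and fix a nontrivial relation with coefficients $c_0,\dots,c_k,d\in\overline{\F}_p$. Coordinatewise this reads
\[\sum_{j=0}^k c_j x_i^{\sigma^{jn}} + d\,\varepsilon_i = 0 \quad \text{for all } i.\]
The first $k+1$ columns are themselves linearly independent. This is Lemma \ref{Lemma: a=1 dieudonne lattices generators} (equivalently, the Moore-determinant criterion): since the $x_i$ are $\F_{p^n}$-linearly independent and $k+1\le g$, some $(k+1)\times(k+1)$ minor of $(x_i^{\sigma^{jn}})$ is nonzero. Hence $d\neq 0$, and after rescaling we may take $d=-1$. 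The relation becomes $\varepsilon_i = P(x_i)$, where
\[P(X)=\sum_{j=0}^k c_j X^{p^{jn}}\]
is an additive ($\F_{p^n}$-semilinear up to twist) polynomial.

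The key step is to apply $\sigma^n$ to this relation. Since $\varepsilon_i\in\F_{p^n}$, we get $\varepsilon_i=\sum_j c_j^{\sigma^n}x_i^{\sigma^{(j+1)n}}$. Subtracting the original relation yields a relation among the columns $x_i,x_i^{\sigma^n},\dots,x_i^{\sigma^{(k+1)n}}$, with coefficients
\[-c_0,\quad c_0^{\sigma^n}-c_1,\quad \dots,\quad c_{k-1}^{\sigma^n}-c_k,\quad c_k^{\sigma^n}.\]
These are $k+2\le g$ columns of Moore type, so by Lemma \ref{Lemma: a=1 dieudonne lattices generators} they are linearly independent (this is where $k\le g-2$ is used). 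All coefficients therefore vanish: $c_0=0$, and then inductively $c_1=\cdots=c_k=0$. Consequently $\varepsilon_i=P(x_i)=0$ for all $i$, as claimed.

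The main point requiring care is the linear-independence input. Lemma \ref{Lemma: a=1 dieudonne lattices generators} is stated for square $g\times g$ matrices, and we need it for $r\le g$ columns of the form $x_i^{\sigma^{jn}}$. I would justify this by extending to $g$ columns: $(x_i^{\sigma^{jn}})_{i,\,j<g}$ is invertible, so any subset of its columns, in particular the first $r$, is linearly independent.
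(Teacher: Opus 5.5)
Your proof is correct, and it takes a different route from the paper's.

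\textbf{The paper's route (rows).} Assuming $\varepsilon_1\neq 0$, the paper restricts to the first $k+2$ rows. It then subtracts $\frac{\varepsilon_j}{\varepsilon_1}$ times the first row from the $j$-th row. Because these multipliers lie in $\F_{p^n}$, they commute with every $\sigma^{ln}$, so the reduced matrix is again of Moore type in $y_j=x_j-\frac{\varepsilon_j}{\varepsilon_1}x_1$. Expanding along the last column produces an explicit nonzero maximal minor: $\pm\varepsilon_1$ times the $(k+1)\times(k+1)$ Moore determinant of $y_2,\dots,y_{k+2}$. This requires the small check that these $y_j$ remain $\F_{p^n}$-independent.

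\textbf{Your route (columns).} You use the $\sigma^n$-invariance of the $\varepsilon_i$ to twist a putative column relation. Subtracting turns it into a relation among the $k+2$ Moore columns $x^{\sigma^{jn}}$, $0\le j\le k+1$, which forces every coefficient to vanish.

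\textbf{Comparison.} The hypothesis $k\le g-2$ enters the paper's proof as having $k+2$ rows available, and yours as having $k+2$ independent Moore columns. Your version avoids choosing a pivot and avoids the independence check on the $y_j$. It also proves directly the form in which the lemma is used later in the paper: the $\overline{\F}_p$-span of $x,x^{\sigma^n},\dots,x^{\sigma^{kn}}$ contains no nonzero vector of $\F_{p^n}^g$, which is the claim $(M_j/M_{j+1})\cap\F_{p^n}^g=\{0\}$. The paper's version has the advantage of exhibiting a concrete nonvanishing minor.

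A minor remark: your additive polynomial $P$ is actually $\F_{p^n}$-linear, not merely semilinear, though this plays no role in the argument.
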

\begin{proof}
    Obviously all maximal minors vanish if all $\varepsilon_i$ are zero. Thus assume there is an index $i \in \llbracket 1,g\rrbracket$ such that $\varepsilon_i \neq 0$, and we want to show that some maximal minor does not vanish. 
    
    Upon reordering we may assume that $i =1$, and upon considering the maximal minor with the last $g-(k+2)$ rows removed, we reduce to the case that \eqref{matrix: F_p^n minors} is a square matrix. For every $j \neq 1$, we subtract $\frac{\varepsilon_j}{\varepsilon_1}$ times the first row from the $j$-th row, which does not change the determinant of \eqref{matrix: F_p^n minors}. Setting $y_1 = x_1$ and $y_j = x_j - \frac{\varepsilon_j}{\varepsilon_1}x_1$, we obtain the matrix

    \begin{align*}
    \begin{pmatrix}
        y_1 & y_1^{\sigma^n} & \cdots & y_1^{\sigma^{kn}} &\varepsilon_1 \\
        y_{2} & y_2^{\sigma^{n}} & \cdots & y_2^{\sigma^{kn}} &0 \\
        \cdots & \cdots & \cdots & \cdots & \cdots \\
        y_g & y_g^{\sigma^{n}} & \cdots & y_g^{\sigma^{kn}} & 0
    \end{pmatrix}.
    \end{align*}

    Since $y_2,...,y_g$ are still linearly independent over $\F_{p^n}$, this matrix has non-zero determinant by Lemma \ref{Lemma: a=1 dieudonne lattices generators}.
\end{proof}

As an immediate consequence of Lemma \ref{Lemma value PiN subset M}, we recover the known fact that any Dieudonné lattice $M \in \mathcal C_{\Lambda_0}(\overline{\F}_p)$ is generated by some $v \in \Lambda_0$ that is only supported on finitely many basis vectors $e_{i,l}$, namely for $i \in \llbracket 1,g\rrbracket$ and $l \in \llbracket0,N-1\rrbracket$. 
\begin{definition}\label{def: coordinate space}
    Let $\mathbb A^{gN}$ be affine space with coordinates $\underline{a}=(a_{i,l})_{(i,l) \in \llbracket 1,g\rrbracket \times \llbracket 0,N-1\rrbracket}$ over $\overline{\F}_p$.
    We define the coordinate space of $\mathcal{C}_{\Lambda_0}$ to be the irreducible open subscheme $U \subset \mathbb A^{gN}$ defined by the open condition of part (ii) of Lemma \ref{Lemma: a=1 dieudonne lattices generators}.
\end{definition}
Consider the map of sets $v(-):\mathbb A^{gN}(\overline{\F}_p) \to \Lambda_0$ which is defined as
\begin{equation*}
    v(\underline{a})=\sum\limits_{i=1}^{g}\sum\limits_{l=0}^{N-1} [a_{i,l}]e_{i,l} \in \Lambda_0
\end{equation*}
for any $\underline{a}= (a_{i,l})_{(i,l) \in \llbracket 1,g\rrbracket \times \llbracket 0,N-1\rrbracket} \in \mathbb A^{gN}(\overline{\F}_p)$.

By Lemmas \ref{Lemma: a=1 dieudonne lattices generators} and \ref{Lemma value PiN subset M}, the induced map $U(\overline{\F}_p) \to \mathcal C_{\Lambda_0}(\overline{\F}_p)$ that sends $\underline{a} \in U(\overline{\F}_p)$ to $\mathcal D \cdot v(\underline{a})$, is surjective. In general, it is not injective because Dieudonné generators need not be unique. 

\begin{remark}\label{Remark: coordinate space upgraded}
    By closer analysis, one can drastically shrink the dimension of $\mathbb A^{gN}$ and upgrade the analogously defined map $U(\overline{\F}_p) \to \mathcal C_{\Lambda_0}(\overline{\F}_p)$ into a morphism of schemes that is bijective on perfect points, see \cite[4.4]{viehmann2008modulispaces} for details. For our purposes, it suffices that the map is surjective on $\overline{\F}_p$-points.
\end{remark}
 
\section{Generic automorphisms of the universal \texorpdfstring{$p$}{p}-divisible group}\label{Section: automorphisms of lattices}
As the title suggests, the goal of this section is to prove Theorem \ref{thmmain3}, i.e.~ to compute the generic automorphism group of the universal $p$-divisible group over $\M$.

We quickly recall the reduction steps from the last section. By Dieudonné theory, we may equivalently compute the generic automorphism group of Dieudonné lattices in the fixed isocrystal $\breve{N}$. We may reduce to considering the $a=1$ locus $\M^\circ$, and furthermore, by transitivity of the $J_{b_0}(\Q_p)$-action on irreducible components, to the irreducible component $\mathcal C_{\Lambda_0}$ labelled by the vertex lattice $\Lambda_0 \subset \breve{N}$.

\subsection{Congruence subgroups}

Recall our fixed identification $\Aut(\Lambda_0,F)=\GL_g(\O_D)$ based on the choice of basis $(e_{i,l})_{i,l}$ of $\breve{N}$. Recall that for any Dieudonné lattice $M \in \mathcal C_{\Lambda_0}(\overline{\F}_p)$ we have a canonical injection
\begin{equation*}
    \Aut(M,F) \hookrightarrow \Aut(\Lambda_0,F)=\GL_g(\O_D).
\end{equation*}

\begin{definition}
    For $s \geq 1$, we define the congruence subgroup of level $s$
    \begin{equation*}
        V_s:=1+\pi^s\Mat_g(\O_D)\subset \GL_g(\O_D).
    \end{equation*}
    Note that implicitly, $V_s$ depends on $g$, $m$ and $n$.
\end{definition}

By definition, elements of $\Aut(\Lambda_0,F)$ preserve the filtration $\Lambda_\bullet$. Therefore the projection $[\cdot]_s:\Lambda_0\rightarrow \Lambda_0/\Lambda_s$ induces the projection $$[\cdot]_s:\Aut(\Lambda_0,F)=\GL_g(\O_D)\rightarrow \GL_g(\O_D/\pi^{s})=\GL_g(\O_D)/V_{s}=\Aut(\Lambda_0/\Lambda_{s},F).$$

\begin{definition}Let $N=gm(n-m)-n+1$. If $n \nmid g$ we define
\begin{equation*}
    \Gamma_{\gen} := \Z_p^\times + \pi^N \Mat_g(\O_D)=\Z_p^\times.V_N,
\end{equation*}
and if $n \mid g$ we define
\begin{equation*}
    \Gamma_{\gen}:=\Z_p^\times + \pi^{N-1}\O_D+\pi^N\Mat_g(\O_D)=(\Z_p^\times + \pi^{N-1}\O_D).V_N.
\end{equation*}
\end{definition}

\begin{remark}\label{remAutUniform}
One can uniformly define $\Gamma_{\gen}$ as
\[\Gamma_{\gen} = \Z_p^\times + p^{\lceil\frac{N-1}{n}\rceil}\O_D + \pi^N\Mat_g(\O_D).\]
Indeed, note that $p^{\lceil \frac{N-1}{n}\rceil}=\pi^{n\lceil\frac{N-1}{n}\rceil}$, and $n\lceil\frac{N-1}{n}\rceil\geq N-1$ with equality if and only if $n\mid N-1$. Since $N-1=gm(n-m)-n$ and since $m$, $n$ are coprime, $n$ divides $N-1$ if and only if $n$ divides $g$. Therefore, if $n \nmid g$, then $p^{\lceil\frac{N-1}{n}\rceil}\O_D\subset \pi^N\Mat_g(\O_D)$.
\end{remark}

The main result of this section is the following, from which Theorem~\ref{thmmain3} then immediately follows.
\begin{theorem}\label{Theorem: generic automorphism group}
    There is a dense open subscheme $Y_{\Lambda_0} \subset \mathcal C_{\Lambda_0}$ such that for all $M \in Y_{\Lambda_0}(\overline{\F}_p)$
    \begin{equation*}
        \Aut(M,F) = \Gamma_{\gen}.
    \end{equation*}
\end{theorem}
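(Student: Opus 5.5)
The proof splits into two inclusions. The first is $\Gamma_{\gen}\subseteq\Aut(M,F)$ for \emph{every} $M\in\mathcal C_{\Lambda_0}(\overline{\F}_p)$. The second is $\Aut(M,F)\subseteq\Gamma_{\gen}$ for $M$ in a dense open subset. Throughout, I write $M=\mathcal D\cdot v$ with $v=v(\underline a)$ for some $\underline a\in U$, and regard $\Aut(M,F)$ as the stabilizer of $M$ in $\GL_g(\O_D)=\Aut(\Lambda_0,F)$.

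\textbf{The first inclusion.} Scalars $\Z_p^\times$ preserve every lattice. For $\gamma=1+\pi^N X$ with $X\in\Mat_g(\O_D)$, we have $\pi^N X M\subseteq\Lambda_N\subseteq M$ by Lemma~\ref{Lemma value PiN subset M}, so $\gamma M\subseteq M$. The same argument applied to $\gamma^{-1}\in V_N$ gives equality. In the case $n\mid g$, it remains to treat $\gamma=1+\pi^{N-1}d$ with $d\in\O_D$. Since elements of $\O_D$ commute with $F$ and $V$, we have $\gamma(F^aV^bv)=F^aV^b\gamma v$, so it suffices to show $\pi^{N-1}d\,v\in M$. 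Modulo $\Lambda_N\subseteq M$, this element is $[\bar d\,a_{i,0}]e_{i,N-1}$ summed over $i$, where $\bar d\in\F_{p^n}$ is the residue of $d$ acting diagonally. In this case $N-1=gm(n-m)-n$ is divisible by $n$, and one checks that $\mathcal B_{N-1}$ contains $g$ pairs with $a-b$ running over the residues $\sigma^{jn}$ shifted so that $0$ is among them. Then $M_{N-1}/M_N=\Lambda_{N-1}/\Lambda_N$ is all of $\overline{\F}_p^g$ by Lemma~\ref{Lemma: basis of level s} and the count $|\mathcal B_{N-1}|=g$ from the proof of Lemma~\ref{Lemma value PiN subset M}. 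Hence the required element lies in $M$.

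\textbf{The second inclusion.} I would argue by induction on the level $s$. Write $\Gamma_s$ for the image of $\Aut(M,F)$ in $\GL_g(\O_D/\pi^s)$, and show that for generic $\underline a$ it coincides with the image of $\Gamma_{\gen}$ for all $s\le N$. Given $\gamma\in\Aut(M,F)$, first multiply by a scalar so that $\gamma\equiv 1\bmod\pi$. This should work because $[\gamma v]_1\in M/M_1=\overline{\F}_p[v]_1$, which forces the reduction $\bar\gamma\in\GL_g(\F_{p^n})$ to satisfy $\bar\gamma\,\underline a_0=\lambda\underline a_0$ with $\underline a_0=(a_{i,0})_i$. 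By the $\F_{p^n}$-linear independence in Lemma~\ref{Lemma: a=1 dieudonne lattices generators}, $\bar\gamma$ is a scalar in $\F_{p^n}^\times$. Then apply the same independence argument at level $1$, using $F$, $V$ and $\sigma$-twists, to force this scalar into $\F_p^\times$.

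Now suppose $\gamma=1+\pi^sX$ with $1\le s<N$ (or $s<N-1$ in the case $n\mid g$). The condition $\pi^sXv\in M$, read modulo $M_{s+1}$, says that the vector $\bar X_0\underline a_0$ lies in the span of $\{[F^aV^bv]_{s+1}\}_{(a,b)\in\mathcal B_s}$. Here $\bar X_0\in\Mat_g(\F_{p^n})$, appropriately $\sigma^s$-twisted. Since $|\mathcal B_s|<g$, Lemma~\ref{Lemma: independence over F_p^n} applied column by column to the $\F_{p^n}$-entries forces $\bar X_0$ to be scalar; applying it once more kills the scalar unless it is permitted. To pass to the next level, I would replace $\gamma$ by $\gamma\cdot c^{-1}$ for a suitable $c\in\Z_p^\times+\pi^{s}\O_D$. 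I expect the invariance under the center to give the extra $\pi^{N-1}\O_D$ exactly when $n\mid g$.

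\textbf{Main obstacle.} The difficulty is that the higher coordinates $a_{i,l}$ with $l\ge1$ enter the level-$s$ condition through cross terms. Consequently the condition at level $s$ is not simply the leading-term condition; it is a polynomial condition in $\underline a$. I would first treat a special point, namely $v$ supported only on $e_{i,0}$. For that $v$ I would verify that $\Aut(M,F)\supseteq\Gamma_{\gen}$ with equality up to a finite-index check, and then use upper semicontinuity of $\gamma\mapsto[\Aut]_{s}$. Concretely, for each of the finitely many elements of $\GL_g(\O_D/\pi^N)$ not in the image of $\Gamma_{\gen}$, the locus where it stabilizes $M/\Lambda_N$ is closed in $U$. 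If it is proper, its complement is open dense, and intersecting over these finitely many elements yields $Y_{\Lambda_0}$ after pushing forward along the open surjection from $U$. The key point, and the place where I expect most of the work, is exhibiting for each such element one point of $U$ where it fails to stabilize $M$. For that I would use the level-$s$ minor computation of Lemma~\ref{Lemma: independence over F_p^n}, with the auxiliary coordinates $a_{i,l}$ chosen generically.
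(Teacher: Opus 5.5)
The closing reduction in your proposal matches the paper's Lemma~\ref{lemma: Y N is open}. Since $\Lambda_N\subset M$ always, it suffices to find, for each of the finitely many classes of $\GL_g(\O_D/\pi^N)$ outside the image of $\Gamma_{\gen}$, one point of $U$ where that class fails to stabilize $M$. But this existence statement is the whole content of the theorem, and none of the tools you propose establishes it.

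First, Lemma~\ref{Lemma: independence over F_p^n} concerns a column with entries in $\F_{p^n}$, whereas $[\pi^sXv]_{s+1}$ has coordinates $\sum_j\bar x_{ij}a_{j,0}\in\overline{\F}_p$, so it does not apply. Moreover, your pointwise conclusions (``$\bar\gamma$ is scalar'', ``$\bar X_0$ is scalar'') fail at special points. For $g=2$, take $\bar\gamma\in\GL_2(\F_{p^n})$ with an eigenvector $(1,\mu)$, where $\mu\in\F_{p^{2n}}\setminus\F_{p^n}$. Then $\underline a_0=(1,\mu)$ is allowed by Lemma~\ref{Lemma: a=1 dieudonne lattices generators}, and $[\gamma v]_1\in M/M_1$. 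What is needed is that certain maximal minors are non-zero \emph{as elements of} $\mathcal P$. The paper obtains this from the distinctness of the monomials $P_{i,l,f}$ (Lemmas~\ref{Lemma: equivalence of monomials} and~\ref{Lemma: minors of delta matrix}).

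Second, at levels $s<N$ with $|\mathcal B_s|=g$ one has $M_s/M_{s+1}=\Lambda_s/\Lambda_{s+1}$, so nothing is detected there. For $g=1$ this happens at every $s<N$ in the semigroup generated by $m$ and $n-m$, including $s=0$. One must subtract $\sum P_{(a,b)}(\underline a)F^aV^bv$ and test at the first $s_1>s$ with $|\mathcal B_{s_1}|<g$, where the coefficients involve the higher coordinates. You flag this as the main obstacle but give no argument. It is exactly Lemma~\ref{Lemma: Minors of delta matrix bis}: affineness in the variables of level $\ge s_1-s_0$, separation of the degrees $p^r$ and $1$, and the valuation argument with the $Q_i^k$.

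Third, when $n\mid s$, an element $1+p^{s/n}\varepsilon$ with $\varepsilon\in\Z_{p^n}$ and residue outside $\F_p$ is invisible at level $s$. Replacing $\gamma$ by $\gamma c^{-1}$ with $c\in\Z_p^\times+\pi^s\O_D$ is not legitimate, because only elements of $\Gamma_{\gen}$ are known to stabilize $M$. The paper instead subtracts the $\breve{\Z}_p$-scalar $p^{s/n}\mu$. It then uses the $\sigma^b$-semilinearity of $\tau_1$ to produce the non-zero term $p^{s/n}(\mu-\mu^{\sigma^{-b}})$ at level $s+1$, where $n\nmid s+1$. This is why Proposition~\ref{proposition: U delta density max case}(ii) allows maps that do not commute with $\tau_1$.

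Finally, your special point is the wrong place to look. Every $\zeta\in\Z_{p^n}\subset\O_D$ acts on all $e_{i,0}$ by one scalar $c$ and commutes with $F$ and $V$. So for $v=\sum_i[a_{i,0}]e_{i,0}$ one gets $\zeta(\D v)=\D(cv)=\D v$. Hence $\Z_{p^n}^\times\subseteq\Aut(M,F)$ there, and this group is not contained in $\Gamma_{\gen}$, so semicontinuity from that point gives nothing.

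Your first inclusion also contains a false step when $n\mid g$. In fact $|\mathcal B_{N-1}|=g-1$, not $g$: the solutions are $((g-k)(n-m)-1,\,km-1)$ for $1\le k\le g-1$. If $M_{N-1}/M_N=\Lambda_{N-1}/\Lambda_N$ held, then $\Lambda_{N-1}\subset M$, contradicting the minimality of $N$ in Lemma~\ref{Lemma value PiN subset M}. It would also put $1+\pi^{N-1}\Mat_g(\O_D)$ into every $\Aut(M,F)$, contradicting the theorem. The fix is easy. Since $n\mid N-1$, we have $\pi^{N-1}=p^{(N-1)/n}$, so $\pi^{N-1}d\,v\equiv c\,p^{(N-1)/n}v \bmod \Lambda_N$, where $c\in\breve{\Z}_p$ is the scalar by which the residue of $d$ acts on the $e_{i,0}$. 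The right-hand side lies in $\D v$ trivially, which is the paper's proof of Lemma~\ref{lemma: Gamma_gen minimal}.
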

Let us handle the obvious inclusion first, which holds without any genericity assumptions.
\begin{lemma}\label{lemma: Gamma_gen minimal}
    $\Gamma_{\gen}\subseteq \Aut(M,F)$ for any $M\in \mathcal C_{\Lambda_0}(\overline{\F}_p)$.
\end{lemma}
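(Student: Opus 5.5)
We must show that every element $\gamma\in\Gamma_{\gen}$ maps $M$ into itself. It is then automatically an automorphism of $(M,F)$: $\gamma$ commutes with $F$ since $\Gamma_{\gen}\subset\GL_g(\O_D)=\Aut(\Lambda_0,F)$, and its inverse lies in $\Gamma_{\gen}$ as well, because $\Gamma_{\gen}$ is a subgroup. To check this, note that $\Z_p^\times$ is a group, $V_N$ is a normal subgroup of $\GL_g(\O_D)$, and in the case $n\mid g$ the set $\Z_p^\times+\pi^{N-1}\O_D$ is closed under products and inverses modulo $\pi^N$. Since $\Gamma_{\gen}$ is generated additively by $\Z_p^\times$, $\pi^N\Mat_g(\O_D)$ and possibly $\pi^{N-1}\O_D$, it suffices to show that each of these three pieces maps $M$ into $M$.

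\emph{Scalars.} By Remark~\ref{remark: scalars in Aut N}, elements of $\Z_p\subset\GL_g(D)$ act as genuine scalar multiplication on $\breve N$. Since $M$ is a $\breve\Z_p$-module, these elements preserve $M$.

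\emph{The term $\pi^N\Mat_g(\O_D)$.} Every element of $\Mat_g(\O_D)$ preserves $\Lambda_0$, and $\pi^N\Lambda_0=\Lambda_N$. Hence $\pi^N\Mat_g(\O_D)\cdot M\subset \pi^N\Mat_g(\O_D)\cdot\Lambda_0\subset\Lambda_N$, and $\Lambda_N\subset M$ by Lemma~\ref{Lemma value PiN subset M}.

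\emph{The term $\pi^{N-1}\O_D$ when $n\mid g$.} This is the main step. Here $\O_D$ embeds diagonally, acting by the same element on each $\breve N_i$. Any $\delta\in\O_D$ can be written as $\delta=\delta_0+\pi\delta'$ with $\delta_0$ a Teichmüller element of $\Z_{p^n}\subset\O_D$, i.e.\ acting on basis vectors by $e_{i,l}\mapsto [c]^{\sigma^{l}}e_{i,l}$ up to the normalization of the chosen basis, for some $c\in\F_{p^n}$. The part $\pi^{N-1}\cdot\pi\delta'$ lies in $\pi^N\Mat_g(\O_D)$, which was handled above. So it remains to show $\pi^{N-1}\delta_0 M\subset M$.

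Since $\pi^{N-1}\delta_0M\subset\Lambda_{N-1}$ and $\Lambda_N\subset M$, it suffices to show that $M_{N-1}/M_N$ is all of $\Lambda_{N-1}/\Lambda_N$, or at least contains the images of $\pi^{N-1}\delta_0 M$. The key computation is
\[
N-1=gm(n-m)-n=m\bigl(g(n-m)-1\bigr)+(n-m)(-1)+\dots,
\]
which we use to count $|\mathcal B_{N-1}|$. When $n\mid g$ we have $n\mid N-1$, and the Bezout argument in the proof of Lemma~\ref{Lemma value PiN subset M}, applied with $i=n$, gives $c_n=-(n-m)$ and $d_n=m$. With these values the family of $g$ tuples extends to include $k=g$, namely $(0,(g+1)m-m)$-type endpoints. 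So in fact $|\mathcal B_{N-1}|\ge g$, and Lemma~\ref{Lemma: basis of level s} then gives that $M_{N-1}/M_N=\Lambda_{N-1}/\Lambda_N$.

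Combined with $\Lambda_N\subset M$, this yields $\Lambda_{N-1}\subset M$, and therefore $\pi^{N-1}\O_D\cdot M\subset\Lambda_{N-1}\subset M$ directly, without needing the Teichmüller decomposition. The only delicate point, which I expect to be the real obstacle, is verifying the count $|\mathcal B_{N-1}|\ge g$ exactly when $n\mid g$. The endpoint tuples have nonnegative entries precisely because $-i=-n$ is a multiple of $n$, so that the choices $c_n=-(n-m)\ge-(n-m)$ and $d_n=m\le m$ are admissible. Once this count is established, the three pieces together give the inclusion $\Gamma_{\gen}\subseteq\Aut(M,F)$.
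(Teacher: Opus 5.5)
Your treatment of the scalars and of $\pi^N\Mat_g(\O_D)$ is fine, and so is the reduction to these additive pieces. The step for $\pi^{N-1}\O_D$, however, rests on a false count. The nonnegative solutions of $am+b(n-m)=gm(n-m)-n$ are exactly $(a,b)=\bigl((g-t)(n-m)-1,\,tm-1\bigr)$ for $t\in\llbracket 1,g-1\rrbracket$. Hence $|\mathcal B_{N-1}|=g-1$ for every $g$, whether or not $n\mid g$.

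Your Bezout values do not solve the equation: $-(n-m)m+m(n-m)=0\neq -n$. For $i=n$ the normalisation $-(n-m)\le c_i<0\le d_i\le m$ cannot be achieved, which is consistent with the proof of Lemma~\ref{Lemma value PiN subset M} only treating $i\le n-1$. Since $N-1<gm(n-m)$, Lemma~\ref{Lemma: basis of level s} shows that $M_{N-1}/M_N$ is a $(g-1)$-dimensional, hence proper, subspace of $\Lambda_{N-1}/\Lambda_N$ for every $M$.

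In fact your conclusion $\Lambda_{N-1}\subset M$ directly contradicts Lemma~\ref{Lemma value PiN subset M}. There, $\Lambda_N$ is shown to be the largest vertex lattice in $M$, while $\Lambda_{N-1}$ is a vertex lattice strictly containing it. So you cannot argue that $\pi^{N-1}\O_D$ preserves $M$ because all of $\Lambda_{N-1}$ lies in $M$. You have to use the specific shape of these operators.

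The Teichm\"uller decomposition you set up and then discarded is exactly what is needed, and it is the paper's argument. Since $m$ and $n$ are coprime, $n\mid g$ is equivalent to $n\mid N-1$, so $\pi^{N-1}=p^{(N-1)/n}$ is central. Compare $\pi^{N-1}\delta_0$ with multiplication by the genuine scalar $p^{(N-1)/n}[c]\in\breve\Z_p$. This is not the element $[c]\in\O_D$, which by Remark~\ref{remark: scalars in Aut N} is not a scalar on $\breve N$ unless $c\in\F_p$. The two operators satisfy:
\begin{itemize}
\item both are $\breve\Z_p$-linear;
\item both send $e_{i,0}$ to $p^{(N-1)/n}[c]e_{i,0}$;
\item both map $\Lambda_1$ into $\Lambda_N$, because $p^{(N-1)/n}\Lambda_1=\Lambda_N$.
\end{itemize}
Hence their difference maps $\Lambda_0$ into $\Lambda_N\subset M$. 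Since $M$ is a $\breve\Z_p$-module, it follows that $\pi^{N-1}\delta_0M\subset M$. With this replacement for your third step, the rest of your proof goes through.
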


\begin{proof}
    Let $M\in\mathcal C_{\Lambda_0}(\overline{\F}_p)$. By our identification 
    $\Aut(\Lambda_0,F) =\GL_g(\mathcal O_D)$, an element $\gamma\in \GL_g(\mathcal O_D)$ lies in $\Aut(M,F)$ if and only if $\gamma(M)\subset M$.

    The map $[\cdot]_N:\Aut(\Lambda_0,F)\rightarrow \Aut(\Lambda_0/\Lambda_N)$ induces an exact sequence 
 \[	0 \rightarrow {V_N} \rightarrow {\Gamma_{\gen}} \rightarrow {[\Z_p^\times + p^{\lceil\frac{N-1}{n}\rceil}\O_D]_N} \rightarrow 0  \]

    By Lemma~\ref{Lemma value PiN subset M} we know that $V_N\subset \Aut(M,F)$. It is also clear that $\Z_p^\times \subset \Aut(M,F)$ is central (and by Remark~\ref{remark: scalars in Aut N}, these are all the scalar multiplications in $\Aut(M,F)$). Thus it remains to prove that $[1+p^{\lceil \frac{N-1}{n}\rceil}\mathcal O_D]_N\subset \Aut(M/\Lambda_N)$. 

    Since $M = \mathcal D \cdot v$ for some generator $v \in \Lambda_0$, it suffices to see that $[p^{\lceil \frac{N-1}{n}\rceil}\xi v]_N\in[\mathcal D\cdot v]_N$ for all $\xi\in \mathcal O_D,v\in\Lambda_0$. Indeed, let $\tilde{\xi} \in \breve{\Z}_p$ be a lift of $[\xi]_1 \in \O_D/\pi = \F_{p^n}$. Then
    \[[p^{\lceil \frac{N-1}{n}\rceil}\xi v]_N=[p^{\lceil \frac{N-1}{n}\rceil}\tilde \xi v]_N\in [\mathcal D\cdot v]_N\]
    because $p^{\lceil\frac{N-1}{n}\rceil}\xi \equiv p^{\lceil\frac{N-1}{n}\rceil}\tilde{\xi} \mod \pi^N$.
\end{proof}

The following lemma shows that in order to prove Theorem \ref{Theorem: generic automorphism group}, we essentially have to find for any $\gamma \in \GL_g(\O_D) \setminus \Gamma_{\gen}$ a Dieudonné lattice that is not stable under $\gamma$.
\begin{lemma}\label{lemma: Y N is open}
    The set of $\overline{\F}_p$-points
    \begin{equation*}
        Y_{\Lambda_0}(\overline{\F}_p)=\{M\in \mathcal{C}_{\Lambda_0}(\overline{\F}_p) \mid \Aut(M,F)=\Gamma_{\gen}\}
    \end{equation*}
    defines an open subscheme $Y_{\Lambda_0}$ of $\mathcal C_{\Lambda_0}$. 
    
    Moreover it is dense in $\mathcal C_{\Lambda_0}$ if and only if for all $\gamma\in \Aut(\Lambda_0,F)\setminus \Gamma_{\gen}$ there exists a Dieudonné lattice $M\in \mathcal C_{\Lambda_0}(\overline{\F}_p)$ such that $\gamma\notin \Aut(M,F)$.
\end{lemma}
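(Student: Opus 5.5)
We will reduce both assertions to finitely many conditions, each closed or open in the coordinate space $U$. The key observation is that, by Lemma \ref{lemma: Gamma_gen minimal} and Lemma \ref{Lemma value PiN subset M}, every $M\in\mathcal C_{\Lambda_0}(\overline{\F}_p)$ satisfies $V_N\subseteq\Gamma_{\gen}\subseteq\Aut(M,F)\subseteq\GL_g(\O_D)$. Hence whether $\gamma\in\Aut(M,F)$ depends only on the class $[\gamma]_N\in\GL_g(\O_D/\pi^N)$, which is a finite group. In these terms
\[
Y_{\Lambda_0}(\overline{\F}_p)=\mathcal C_{\Lambda_0}(\overline{\F}_p)\setminus\bigcup_{[\gamma]\in \GL_g(\O_D/\pi^N)\setminus[\Gamma_{\gen}]_N} Z_{\gamma},\qquad Z_\gamma=\{M\mid \gamma(M)\subseteq M\}.
\]
The union is finite, so openness will follow once each $Z_\gamma$ is shown to be closed in $\mathcal C_{\Lambda_0}$.

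To see that $Z_\gamma$ is closed, we work with the truncation $M/\Lambda_N\subset\Lambda_0/\Lambda_N$. Since $\Lambda_N\subseteq M$, the condition $\gamma M\subseteq M$ is equivalent to $\gamma(M/\Lambda_N)\subseteq M/\Lambda_N$. By Lemma \ref{Lemma: basis of level s}, $M/\Lambda_N$ has constant length $d=\sum_{j<N}|\mathcal B_j|$, so $M\mapsto M/\Lambda_N$ is a point of a (perfect) Grassmannian-type scheme of $W_N$-submodules of $\Lambda_0/\Lambda_N$ of fixed length, and this assignment is algebraic on $\mathcal C_{\Lambda_0}$. Invariance of a submodule under a fixed endomorphism is a closed condition on such a scheme. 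Concretely, pulled back to the coordinate space $U$, the submodule is spanned by the truncations $[F^aV^bv(\underline a)]_N$ with $am+b(n-m)<N$, whose coordinates are polynomial in $\underline a$ and its Frobenius twists. The condition $\gamma v\in M$ (together with $\gamma F^aV^b v=F^aV^b\gamma v$) then becomes the vanishing of certain minors. Because $U\to\mathcal C_{\Lambda_0}$ is surjective and $\mathcal C_{\Lambda_0}$ carries the reduced structure, it suffices to work on $\overline{\F}_p$-points, where this description of closedness holds; the same applies to openness. I expect this step to be the main technical point: one has to make rigorous that the family $M/\Lambda_N$ defines a flat family, equivalently a morphism to a Grassmannian. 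The constant fiber dimensions $\dim M_j/M_{j+1}$ from Lemma \ref{Lemma: basis of level s} provide exactly what is needed for this.

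For density, $\mathcal C_{\Lambda_0}$ is irreducible. A finite union of closed subsets $Z_\gamma$ is a proper subset if and only if each $Z_\gamma$ is proper, and in that case its complement is dense. Moreover $Z_\gamma$ is proper exactly when some $M$ is not $\gamma$-stable. Conversely, if some $\gamma\notin\Gamma_{\gen}$ stabilizes every $M$, then $Y_{\Lambda_0}$ is empty. This gives the stated equivalence.
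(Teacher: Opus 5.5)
Your overall architecture matches the paper's, and your density argument via irreducibility of $\mathcal C_{\Lambda_0}$ is the paper's. Since $V_N\subseteq\Gamma_{\gen}\subseteq\Aut(M,F)$ for every $M$, whether $\gamma\in\Aut(M,F)$ holds depends only on $\gamma$ modulo a finite-index subgroup. You use $V_N$ and the paper uses cosets of $\Gamma_{\gen}$, which is equivalent. Hence $Y_{\Lambda_0}$ is the complement of finitely many loci $Z_\gamma$.

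The gap is the only non-formal step, closedness of $Z_\gamma$, which you do not establish.
\begin{itemize}
\item Your fallback of checking it on the coordinate space $U$ and descending ``because $U\to\mathcal C_{\Lambda_0}$ is surjective'' does not work. In the paper, $\underline a\mapsto\mathcal D\cdot v(\underline a)$ is only a surjection on $\overline{\F}_p$-points (Remark~\ref{Remark: coordinate space upgraded}). Even for a surjective morphism of schemes, closedness of a saturated preimage does not in general give closedness downstairs unless the map is submersive (e.g.\ open or closed). Reducedness of $\mathcal C_{\Lambda_0}$ plays no role in this topological question.
\item The ``vanishing of minors'' description on $U$ is not immediate either. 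Once $N>n$, the module $\Lambda_0/\Lambda_N$ is not killed by $p$. Moreover, expressing $[\gamma v]_N$ in terms of the $[F^aV^bv]_N$ involves rational coefficients, cf.\ the $\tilde P_\alpha$ in Lemma~\ref{Lemma: Minors of delta matrix bis}.
\item Your main route, an algebraic map $M\mapsto M/\Lambda_N$ from $\mathcal C_{\Lambda_0}$ to a Grassmannian-type scheme, can be made to work. However, it needs real external input, e.g.\ identifying the perfection of the reduced Rapoport--Zink space with an affine Deligne--Lusztig variety in the Witt vector affine Grassmannian. Constant dimensions of $M_j/M_{j+1}$ at geometric points do not by themselves produce the family, and you leave exactly this point open.
\end{itemize}

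The missing idea is that no such construction is needed. Every $\gamma\in\GL_g(\O_D)=\Aut(\Lambda_0,F)\subset J_{b_0}(\Q_p)$ acts on $\M$ by an automorphism. This automorphism preserves $\mathcal C_{\Lambda_0}$: $\gamma$ commutes with $\tau_0$ and $\tau_1$, so $(\gamma M)_\tau=\gamma(M_\tau)$, and $\gamma\Lambda_0=\Lambda_0$. Note that $\gamma M\subseteq M$ forces $\gamma M=M$ by comparing indices in $\Lambda_0$. So $Z_\gamma$ is precisely the fixed locus of this automorphism, i.e.\ the preimage of the diagonal under $(\id,\gamma)\colon\mathcal C_{\Lambda_0}\to\mathcal C_{\Lambda_0}\times\mathcal C_{\Lambda_0}$. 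This is closed because $\mathcal C_{\Lambda_0}$ is separated. That is the paper's argument, and inserting it in place of your Grassmannian step makes your proof complete.
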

\begin{proof}
   Since the action of $J_{b_0}$ on the Rapoport-Zink space $\M$ is continuous, any $\gamma\in \Aut(\Lambda_0,F)$ induces a continuous map on $\mathcal C_{\Lambda_0}$. Note that $M \in \mathcal C_{\Lambda_0}$ is fixed by $\gamma$ if and only if $\gamma\in \Aut(M,F)$. Since $\mathcal C_{\Lambda_0}$ is separated, its set of $\gamma$-fixed points is Zariski closed so
    $$Y_\gamma(\overline{\F}_p)=\{M\in \mathcal C_{\Lambda_0}(\overline{\F}_p) \mid \gamma \notin \Aut(M,F)\}$$ are the $\overline{\F}_p$-points of an open subscheme $Y_\gamma$ of $\mathcal C_{\Lambda_0}$.

    Since $1+\pi^N\Mat_g(\mathcal O_D)$ is finite index in $\GL_g(\O_D)$, so is $\Gamma_{\gen}$. Let $(\gamma_i)_{i\in I}$ be a choice of coset representatives of $(\GL_g(\O_D)/\Gamma_{\gen})\setminus \{\Gamma_{\gen}\}$. Since $I$ is finite, $\cap_{i\in I} Y_{\gamma_i}$ is an open subscheme of $\mathcal C_{\Lambda_0}$ and by Lemma~\ref{lemma: Gamma_gen minimal} 
    $$\big(\bigcap\limits_{i\in I} Y_{\gamma_i} \big)(\overline{\F}_p)=\{M\in \mathcal C_{\Lambda_0}(\overline{\F}_p)\mid \Aut(M,F)=\Gamma_{\gen}\}=Y_{\Lambda_0}(\overline{\F}_p).$$
    
    This proves the first claim. To prove the second claim, note that $Y_{\Lambda_0} = \cap_{i \in I} Y_{\gamma_i}$ is dense in $\mathcal C_{\Lambda_0}$ if and only if $Y_{\gamma_i}$ is for each $i \in I$. Since $\mathcal C_{\Lambda_0}$ is irreducible, each $Y_{\gamma_i}$ is dense if and only if it is non-empty, i.e.~if and only if each $Y_{\gamma_i}$ admits an $\overline{\F}_p$-point. The second claim follows immediately.
 \end{proof}

Therefore, to prove Theorem~\ref{Theorem: generic automorphism group} it is enough to construct for each $\gamma\in \GL_g(\O_D)\setminus \Gamma_{\gen}$ a Dieudonné lattice $M\in\mathcal C_{\Lambda_0}(\overline{\F}_p)$ such that $\gamma \notin \Aut(M,F)$. The main technical result that we are going to use is the next proposition.

\begin{proposition} \label{proposition: U delta density max case}
Let $s_0\in \llbracket 0,N-1\rrbracket$ and $\delta\in \Hom_{\breve{\Z}_p}(\Lambda_0,\Lambda_{s_0})$, and suppose that either:
\begin{enumerate}
    \item $\delta\tau_1=\tau_1\delta$ and $[\delta]_{s_0+1}\notin [p^{\lceil\frac{s_0}{n}\rceil}\breve{\Z}_p]_{s_0+1}$ or
    \item $n\nmid s_0$ and $[\delta]_{s_0+1}\neq0$.
\end{enumerate}
Then the set
\begin{equation}\label{eq U delta definition}
	U_\delta:=\{\underline{a}\in U(\overline{\F}_p) \mid \delta(v(\underline{a}))\notin \D\cdot v(\underline{a})\}
\end{equation} is non-empty.
\end{proposition}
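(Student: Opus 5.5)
We argue by contraposition: assuming $\delta(v)\in\D\cdot v$ for every $v=v(\underline a)$ with $\underline a\in U(\overline{\F}_p)$, we derive that $[\delta]_{s_0+1}$ violates the relevant hypothesis. Everything happens at the truncation level $s_0+1$. Since $\delta(\Lambda_0)\subset\Lambda_{s_0}$, the condition $\delta(v)\in M=\D v$ implies $[\delta(v)]_{s_0+1}\in M_{s_0}/M_{s_0+1}$. The map $[\delta]_{s_0+1}$ factors through $\Lambda_0/\Lambda_1\to\Lambda_{s_0}/\Lambda_{s_0+1}$ and is $\overline{\F}_p$-semilinear in a suitable sense. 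It is therefore described by a matrix of $g\times g$ entries in $\breve{\Z}_p/p$ (after twisting by $\sigma^{s_0}$ through $\pi^{s_0}$), and it only sees the coordinates $a_{1,0},\dots,a_{g,0}$. By Lemma~\ref{Lemma: basis of level s}, $M_{s_0}/M_{s_0+1}$ is spanned by the vectors $w_k=\sum_i a_{i,0}^{\sigma^{a_1-b_1+(k-1)n}}[e_{i,s_0}]$ for $(a_k,b_k)\in\mathcal B_{s_0}$, with $|\mathcal B_{s_0}|\le g$. For $s_0\le N-1<gm(n-m)$ these vectors are linearly independent. The membership condition thus says that the vector $x'=[\delta(v)]_{s_0+1}$, whose entries are $\overline{\F}_p$-linear combinations of the $a_{i,0}^{\sigma^{c}}$ for a fixed twist $c$, lies in the span of the columns $(x_i^{\sigma^{jn}})_i$, $j=0,\dots,|\mathcal B_{s_0}|-1$, where $x_i=a_{i,0}^{\sigma^{a_1-b_1}}$. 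If $\mathcal B_{s_0}=\emptyset$, then $M_{s_0}/M_{s_0+1}=0$ and any nonzero value of $[\delta(v)]_{s_0+1}$ already gives a point of $U_\delta$.

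\textbf{Case (ii), $n\nmid s_0$.} The value of $[\delta(v)]_{s_0+1}$ involves Frobenius twists $\sigma^{c}$ with $c\equiv s_0\cdot(\text{unit})\not\equiv a_1-b_1\pmod n$. We use this mismatch of twists. Choose $a_{1,0},\dots,a_{g,0}$ generic, for instance algebraically independent over $\F_p$, or in a large finite field. The $\overline{\F}_p$-linear relations among the monomials $a_{i,0}^{\sigma^{c}}$ and $a_{i,0}^{\sigma^{c'+jn}}$ with $c\not\equiv c'\pmod n$ are then trivial, by Dedekind/Artin independence of the characters $\sigma^{c}$ on a field generated by the $a_{i,0}$. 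A nonzero $[\delta]_{s_0+1}$ produces a nonzero vector outside the span, so $U_\delta\neq\emptyset$. The higher coordinates $a_{i,l}$, $l\ge1$, can be set to zero, since they contribute only to higher truncation levels.

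\textbf{Case (i), $\delta$ commutes with $\tau_1$.} Then $[\delta]_{s_0+1}$ is given by an element of $\pi^{s_0}\Mat_g(\O_D)$ modulo $\pi^{s_0+1}$, that is, a matrix $\varepsilon\in\Mat_g(\F_{p^n})$ acting through $\pi^{s_0}$. If $n\nmid s_0$ we are in case (ii). If $n\mid s_0$, the twists agree and $[\delta(v)]_{s_0+1}=\varepsilon\cdot x$ up to the common Frobenius power. The hypothesis that $[\delta]_{s_0+1}$ is not a scalar $p^{s_0/n}\lambda$ means $\varepsilon\notin\F_{p}\cdot 1$, or possibly $\varepsilon\notin\F_{p^n}$-scalar, depending on how $\breve{\Z}_p$ scalars sit. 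We must show that for generic $x$, $\varepsilon x\notin\operatorname{span}(x,x^{\sigma^n},\dots,x^{\sigma^{kn}})$ with $k+1=|\mathcal B_{s_0}|\le g-1$. Here $|\mathcal B_{s_0}|<g$ because $s_0\le N-1$, so this is exactly the setting of Lemma~\ref{Lemma: independence over F_p^n}. Take $x$ with $\F_{p^n}$-independent entries. Suppose $\varepsilon x=\sum_j c_j x^{\sigma^{jn}}$; applying this identity to all $\F_{p^n}$-conjugate choices, or changing $x$ by $\GL_g(\F_{p^n})$, should force $\varepsilon$ to be scalar. Concretely, pick a vector $u\in\F_{p^n}^g$ with $\varepsilon u\notin\F_{p^n}u$ and replace $x$ by $x+tu$ with $t$ generic. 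Differentiating in $t$ against the Frobenius-twisted columns, which are additive in $t$ with $t^{\sigma^{jn}}$ independent, yields a column $\varepsilon u$ whose appending must keep the rank non-maximal. Lemma~\ref{Lemma: independence over F_p^n} then gives the contradiction.

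The main obstacle is this last step: making rigorous the passage from "for all generic $x$ the vector $\varepsilon x$ lies in the Frobenius span" to "$\varepsilon$ is scalar", together with correctly identifying which scalars, $\F_p$ versus $\F_{p^n}$, are excluded by the hypothesis. I would first attempt a direct argument with a specific choice such as $x$ chosen inside a field $\F_{p^{nr}}$ where the $\sigma^n$ action is explicit.
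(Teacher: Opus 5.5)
Your proposal has a genuine gap. It works entirely at truncation level $s_0+1$, and this needs $M_{s_0}/M_{s_0+1}$ to be a \emph{proper} subspace of $\Lambda_{s_0}/\Lambda_{s_0+1}$. In case (i) you state explicitly that $|\mathcal B_{s_0}|<g$ because $s_0\le N-1$. In case (ii) the step ``a nonzero $[\delta]_{s_0+1}$ produces a nonzero vector outside the span'' relies on the same thing.

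This is false in general. For $g=1$ and $(n,m)=(7,2)$ one has $N=4$ and $\mathcal B_2=\{(1,0)\}$, so $s_0=2$ falls under hypothesis (ii). For $g=2$ and $(n,m)=(9,2)$ one has $N=20$ and $\mathcal B_{18}=\{(9,0),(2,2)\}$ with $n\mid s_0$, which is relevant for (i). At such levels Lemma~\ref{Lemma: basis of level s} gives $M_{s_0}/M_{s_0+1}=\Lambda_{s_0}/\Lambda_{s_0+1}$ for \emph{every} $M\in\mathcal C_{\Lambda_0}(\overline{\F}_p)$. So $[\delta(v)]_{s_0+1}$ always lies in it, and no twist mismatch or appeal to Lemma~\ref{Lemma: independence over F_p^n} can detect anything. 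This is exactly the situation of Example~\ref{example: explicit parameters}(2).

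The missing idea is to pass to a higher level. Take the smallest $s_1>s_0$ with $|\mathcal B_{s_1}|<g$, which exists because $s_0<N$. Subtract from $\delta(v)$ the unique combination $\sum_{s=s_0}^{s_1-1}\sum_{(a,b)\in\mathcal B_s}P_{(a,b)}(\underline a)\,F^aV^bv$ that agrees with it modulo $\Lambda_{s_1}$. The coefficients $P_{(a,b)}$ are now rational functions of $\underline a$, affine in the variables of level $\ge s_1-s_0$. Then show that the level-$s_1$ residue generically avoids $M_{s_1}/M_{s_1+1}$. This is Lemma~\ref{Lemma: Minors of delta matrix bis}:
\begin{itemize}
\item Under (ii), some $P_{(a,b)}\neq 0$ with $(a,b)\in\mathcal B_{s_0}$ has $r(a,b)\neq 0$. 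It produces terms of degree $p^{r}$ or $1+p^{r}$ in $X_{i,s_1-s_0}$ that nothing else can cancel.
\item Under (i) with $n\mid s_0$, $\tau_1$-commutativity gives $\delta^{j,s+1}_{i,l+1}=(\delta^{j,s}_{i,l})^{\sigma^b}$. This transports the obstruction from $(\delta^{j,s_0}_{i,0})_{i,j}$ to $(\delta^{j,s_1}_{i,s_1-s_0})_{i,j}$, and a valuation argument in the variables $X_{j,0}$ concludes.
\end{itemize}

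Even when $|\mathcal B_{s_0}|<g$, your case (ii) has a flaw. There $\delta$ is only $\breve{\Z}_p$-linear, so $[\delta]_{s_0+1}$ factors through $\Lambda_0/p\Lambda_0$, not $\Lambda_0/\Lambda_1$. Hence $[\delta(v)]_{s_0+1}$ also depends on the coordinates $a_{i,l}$ with $l\ge 1$, and setting these to zero can kill it. This happens in the very application of the proposition. Take $\gamma\in\O_D^\times$ coming from a unit $\zeta$ of the unramified subring $\Z_{p^n}\subset\O_D$ whose residue is not in $\F_p$. The proof of Theorem~\ref{Theorem: generic automorphism group} then takes $s_0=0$ and $\delta_2=\gamma-\zeta$, which vanishes on every $e_{i,0}$ and is detected only on the $e_{i,1}$.

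The paper keeps all variables $X_{i,l}$ in the matrix $A_1$ of Lemma~\ref{Lemma: minors of delta matrix}. It handles $l>0$ through the distinctness of the monomials $P_{i,l,f}$ (Lemma~\ref{Lemma: equivalence of monomials}). This monomial bookkeeping is also what would make your Dedekind-independence heuristic rigorous. Non-membership in the span is the non-vanishing of a minor, which is a sum of products of twisted coordinates, not a linear relation with constant coefficients.

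Finally, in case (i) with $n\mid s_0$ and $|\mathcal B_{s_0}|<g$, the excluded matrices are all $\overline{\F}_p$-scalars, namely the images of $\breve{\Z}_p$. Your $x+tu$ argument for this sub-case is left unfinished. Conditions (1)--(2) of Lemma~\ref{Lemma: minors of delta matrix} settle it directly.
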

\begin{remark}
    Note that a priori we do not assume $\delta$ to be compatible with $F$, we only make the mentioned weaker assumptions.
\end{remark}
The proof of Proposition~\ref{proposition: U delta density max case} is supplied in Section~\ref{subsection: minors computation}. For now, let us deduce Theorem~\ref{Theorem: generic automorphism group} from it.

\begin{proof}[Proof that Proposition~\ref{proposition: U delta density max case} implies Theorem~\ref{Theorem: generic automorphism group}]
    By Lemma~\ref{lemma: Y N is open} it suffices to produce, for each $\gamma \in \GL_g(\O_D)\setminus \Gamma_{\gen}$, a Dieudonné lattice $M\in \mathcal C_{\Lambda_0}(\overline{\F}_p)$ such that $\gamma\notin \Aut(M,F)$. Since $\mathcal C_{\Lambda_0}(\overline{\F}_p)=\mathcal D\cdot v(U(\overline{\F}_p))$, it is equivalent to finding $\underline a\in U(\overline{\F}_p)$ such that $\gamma(v(\underline a))\notin \mathcal D\cdot v(\underline a)$.

    Let us now fix $\gamma\in \GL_g(\O_D)\setminus \Gamma_{\gen}$. Then $[\gamma]_N\notin [\Z_p^\times + p^{\lceil\frac{N-1}{n}\rceil}\O_D]_N$. Let $s_0\in \llbracket 0, N-1\rrbracket$ be minimal such that $[\gamma]_{s_0+1}\notin [\Z_p^\times + p^{\lceil\frac{N-1}{n}\rceil}\O_D]_{s_0+1}$. Because $n\lceil\frac{N-1}{n}\rceil\geq N-1\geq s_0$, we have $[p^{\lceil\frac{N-1}{n}\rceil}\O_D]_{s_0}=0$. Therefore minimality of $s_0$ implies that there exists $\lambda\in \Z_p^\times$ such that $[\gamma]_{s_0}=[\lambda]_{s_0}$. Consider the morphism $\delta_1:=\gamma-\lambda \in \Hom_{\breve{\Z}_p}(\Lambda_0,\Lambda_{s_0})$ (using that $[\delta_1]_{s_0}=0$). Since $\gamma$ and $\lambda$ both commute with $\tau_1$, so does $\delta_1$.
    
    We make the following case distinction.

    \textbf{Case 1:} Suppose that $[\gamma]_{s_0+1}\notin [\Z_p^\times + p^{\lceil\frac{s_0}{n}\rceil}\O_D]_{s_0+1}$.  Then $[\delta_1]_{s_0+1}\notin [p^{\lceil\frac{s_0}{n}\rceil}\breve{\Z}_p]_{s_0+1}$. Thus by part (i) of Proposition~\ref{proposition: U delta density max case}, $U_{\delta_1}=\{\underline{a} \in U(\overline{\F}_p) \mid \delta_1(v(\underline{a})) \notin \mathcal D \cdot v(\underline{a})\}$ is non-empty. Moreover, since Dieudonné lattices are additive groups stable by scalar multiplication, it is clear that it satisfies 
    \begin{equation*}
        U_{\delta_1} = \{\underline{a} \in U(\overline{\F}_p) \mid \gamma(v(\underline{a})) \notin \mathcal D \cdot v(\underline{a})\}.
    \end{equation*}
    Hence for such $\gamma$ there exists an $M\in \mathcal C_{\Lambda_0}(\overline{\F}_p)$ satisfying $\gamma \notin \Aut(M,F)$.

    \textbf{Case 2:} Suppose now that $[\gamma]_{s_0+1}\in [\Z_p^\times + p^{\lceil\frac{s_0}{n}\rceil}\O_D]_{s_0+1}$ (in particular $s_0+1 < N$). By assumption on $s_0$ we know that $[\gamma]_{s_0+1}\notin [\Z_p^\times]_{s_0+1}$, which in turn forces $[p^{\lceil\frac{s_0}{n}\rceil}\O_D]_{s_0+1}\neq 0$ and thus $n \mid s_0$. Since $p^\frac{s_0}{n}\pi\in \operatorname{ker}([\cdot]_{s_0+1})$, we have $[p^{\frac{s_0}{n}}\O_D]_{s_0+1}=[p^{\frac{s_0}{n}}\Z_{p^n}]_{s_0+1}$. Thus we can choose $\mu\in \Z_{p^n}$ such that $[\gamma]_{s_0+1}=[\lambda+p^{\frac{s_0}{n}}\mu]_{s_0+1}$. Recall that $\tau_1$ is $\sigma^b$-linear where $b$ is an inverse of $m$ mod $n$ (in particular it is coprime to $n$).
    By our assumption on $s_0$ we know that $[p^{\frac{s_0}{n}}\mu]_{s_0+1}\notin [\Z_p]_{s_0+1}$, so that $[\mu]_1\in \F_{p^n}\setminus \F_p$. Therefore $[\mu-\mu^{\sigma^b}]_1=[\mu]_1-([\mu]_1)^{\sigma^b}\neq 0$, that is to say $\operatorname{val}(\mu-\mu^{\sigma^b})=0$.

    Let $\delta_2=\delta_1-p^{\frac{s_0}{n}}\mu\in \Hom_{\breve{\Z}_p}(\Lambda_0,\Lambda_{s_0+1})$. By the same reasoning as before we have
    \begin{equation*}
        U_{\delta_2}= \{\underline{a} \in U(\overline{\F}_p) \mid \gamma(v(\underline{a})) \notin \mathcal D \cdot v(\underline{a})\}.
    \end{equation*}
    We want to apply part (ii) of Proposition~\ref{proposition: U delta density max case} to argue as in the previous case that $U_{\delta_2}$ is non-empty, which would conclude the proof. The fact that $n \mid s_0$ implies that $n \nmid s_0+1$, so in particular it suffices to check that $[\delta_2]_{s_0+2}\neq 0$ in order to apply the Proposition to $\delta_2$ at level $s_0+1$ (recall that $s_0+1 < N$).

    We now prove that $[\delta_2]_{s_0+2}\neq 0$ by evaluating it on the basis vectors $e_{i,1}$. Since $\tau_1$ is $\sigma^b$-linear and commutes with $\delta_1$, we have
    \[\delta_2(e_{i,1})=(\delta_1-p^\frac{s_0}{n}\mu)(e_{i,1})=\tau_1(\delta_1-p^\frac{s_0}{n}\mu^{\sigma^{-b}})(e_{i,0}).\]

    Therefore, $[\delta_2(e_{i,1})]_{s_0+2}\neq 0$ if and only if $[(\delta_1-p^\frac{s_0}{n}\mu^{\sigma^{-b}})(e_{i,0})]_{s_0+1}\neq 0$. Since $[\delta_1]_{s_0+1}=[p^\frac{s_0}{n}\mu]_{s_0+1}$, we have $[(\delta_1-p^\frac{s_0}{n}\mu^{\sigma^{-b}})(e_{i,0})]_{s_0+1}=[p^\frac{s_0}{n}(\mu-\mu^{\sigma^{-b}})e_{i,0}]_{s_0+1}$. By the above discussion, $\operatorname{val}(p^\frac{s_0}{n}(\mu-\mu^{\sigma^{-b}}))=s_0$ and thus $[p^\frac{s_0}{n}(\mu-\mu^{\sigma^{-b}})e_{i,0}]_{s_0+1}\neq 0$. This proves the claim and concludes the proof.
\end{proof}

\subsection{Construction of open dense subsets of parameters}\label{subsection: minors computation}
The aim of this subsection is to supply the proof of Proposition~\ref{proposition: U delta density max case}. 

In the proof, we leverage the amount of freedom we have to maneuver in $\Lambda_{s_0}/\Lambda_{s_0+1}$. Namely, fix $\delta$ as in the proposition. Then by Lemma~\ref{Lemma: basis of level s} the inclusion
\begin{equation*}
    M_{s_0}/M_{s_0+1} \subseteq \Lambda_{s_0}/\Lambda_{s_0+1}
\end{equation*}
is either strict for all $M\in \mathcal C_{\Lambda_0}$ or an equality for all $M\in \mathcal C_{\Lambda_0}$. When the inclusion is strict, we are going to have enough room to find elements $\underline{a}\in U(\overline{\F}_p)$ such that, for $M=\mathcal D \cdot v(\underline{a})$,  $[\delta(v(\underline{a}))]_{s_0+1} \notin M_{s_0}/M_{s_0+1}$, which implies $\delta(v(\underline{a}))\notin M$. This is the case whenever $s_0 < (g-1)m(n-m)$, and also sporadically for $(g-1)m(n-m)+1 \leq s_0 \leq N-1$ by Lemma \ref{Lemma value PiN subset M}. This sporadicity will cause us some additional work.

\begin{example}\label{example: explicit parameters}
To illustrate this in a simple example, suppose that $n \geq 3$, that $s_0=m<n-m$ and that $\delta\in \Hom_{\breve{\Z}_p}(\Lambda_0,\Lambda_m)$ is defined by $\delta(e_{i,l})=e_{i,l+m}$. Let $\underline{a}=(a_{i,l})_{i,l}\in U(\overline{\F}_p)$ and $M= \mathcal D \cdot v(\underline{a})$. By Lemma~\ref{Lemma: basis of level s}, the $\overline{\F}_p$-vector space $M_m/M_{m+1}$ is one-dimensional and generated by $[F(v(\underline{a}))]_{m+1}=\sum\limits_{i=1}^g a_{i,0}^\sigma [e_{i,m}]_{m+1}$.

\begin{enumerate}
    \item If $g=2$, then $M_m/M_{m+1} \subsetneq \Lambda_m/\Lambda_{m+1}$ has codimension $1$, so we may try to find a parameter $\underline{a}$ for which 
        \[ [\delta(v(\underline{a}))]_{m+1}=a_{1,0}[e_{1,m}]_{m+1}+a_{2,0}[e_{2,m}]_{m+1}\]
    does not lie in $M_m/M_{m+1}$. It is collinear to 
        \[[F(v(\underline{a}))]_{m+1}=a_{1,0}^\sigma[e_{1,m}]_{m+1}+a_{2,0}^\sigma[e_{2,m}]_{m+1}\]
    if and only if $a_{1,0}a_{2,0}^\sigma -a_{2,0}a_{1,0}^\sigma = 0$. The latter does not hold if $a_{1,0},a_{2,0}$ are linearly independent over $\F_p$, which is the case when $\underline{a}\in U(\overline{\F}_p)$.

    \item Suppose now that $g=1$. Then $M_m/M_{m+1} = \Lambda_m/\Lambda_{m+1}$ and we cannot apply the preceding argument. For sake of exposition, suppose that $n-m>m+1$ which implies $\mathcal B_{m+1}=\emptyset$ and thus by Lemma~\ref{Lemma: basis of level s} that $M_{m+1}/M_{m+2} =0$. So in order to find a parameter $\underline{a}$ such that $\delta(v(\underline{a})) \notin M$, it suffices to construct a linear combination $\tilde{v} \in \Lambda_{m+1}$ of $\delta(v(\underline{a}))$ with an element of $M$ such that $[\tilde{v}]_{m+2} \neq 0$. Since
    \[
        [\delta(v(\underline{a}))]_{m+1}=a_{1,0}[e_m]_{m+1}=\frac{a_{1,0}}{a_{1,0}^\sigma}[F(v(\underline{a}))]_{m+1},
    \]
    we have $\delta(v(\underline{a}))-[\frac{a_{1,0}}{a_{1,0}^\sigma}]F(v(\underline{a}))\in \Lambda_{m+1}$ and 
    \[
        [\delta(v(\underline{a}))-[\frac{a_{1,0}}{a_{1,0}^\sigma}]F(v(\underline{a}))]_{m+2}=(a_{1,1}-\frac{a_{1,0}}{a_{1,0}^\sigma}a_{1,1}^\sigma)[e_{m+1}]_{m+2}.
    \]
    This is non-zero whenever $a_{1,0}$ and $a_{1,1}$ are linearly independent over $\F_p$, so it defines a non-empty (and even Zariski dense) subset of $U(\overline{\F}_p)$. 
\end{enumerate}

\end{example}
The first example illustrates our strategy of proof whenever $|\mathcal B_{s_0}|<g$. The second example illustrates how to deal with the remaining cases where $|\mathcal B_{s_0}|=g$. We then have to truncate at the smallest level $s_1>s_0$ such that $|\mathcal B_{s_1}|<g$. We know that such an $s_1$ exists by Lemma~\ref{Lemma value PiN subset M} because $s_0<N$.

In these two examples, finding satisfactory parameters boils down to taking the non-vanishing locus of certain functions on $\A^{gN}(\overline{\F}_p)$.  In general for $|\mathcal B_{s_0}|<g$ these functions can be thought of as polynomials whose exponents are powers of $p$ (potentially negative because $V$ is $\sigma^{-1}$-linear). They are defined as minors of matrices, compare Lemma~\ref{Lemma: minors of delta matrix}. We can then prove the result in this case, which we do in the intermediary Proposition~\ref{proposition: U delta density}.

In the case  $|\mathcal B_{s_0}|=g$ these functions are in general more involved, but they are still defined as minors of matrices. We prove that they are non-zero in Lemma~\ref{Lemma: Minors of delta matrix bis}, and then conclude with the proof of Proposition~\ref{proposition: U delta density max case}.
\begin{notation} To prepare these generalizations, let us introduce the following notations.

    \begin{enumerate}

        \item The symbol $g'$ denotes an element of $\llbracket 0,g-1\rrbracket$, and $r_1,\dots,r_{g'}$ denote pairwise distinct integers.
        \item The symbol $J$ denotes a subset of $\llbracket 1,g\rrbracket$ of cardinal $|J|=g'+1$.
        \item The symbol $\mathfrak S_J$ denotes the set of bijections $\llbracket 0,g'\rrbracket\rightarrow J$. We let $\epsilon:\mathfrak S_J\rightarrow\{\pm 1\}$ denote a signature function on $\mathfrak S_J$.
        \item For any $(i,l,f)\in \llbracket 1,g\rrbracket\times \llbracket 0,N-1\rrbracket\times \mathfrak S_J$, the symbol $P_{i,l,f}$ denotes the monomial
	\begin{equation*}
		P_{i,l,f}=X_{i,l}\prod\limits_{k=1}^{g'} X^{p^{r_k}}_{f(k),0}.
	\end{equation*}
    \end{enumerate}
\end{notation}

We let $\mathcal P$ denote the ring $\mathcal P=\overline{\F}_p\big[X^{p^{-n}}_{i,l}: i\in \llbracket1,g\rrbracket, \, l\in \llbracket0,N-1\rrbracket,\, n\geq 0\big]$. Elements of $\mathcal P$ can be evaluated on points of $\A^{gN}$, setting that the evaluation of $X_{i_0,l_0}^{p^n}$ on $\underline{a}=(a_{i,l})_{i,l}$ is given by $a_{i_0,l_0}^{\sigma^n}$. The non-vanishing locus of a non-zero element of $\mathcal P$ then defines an open dense subset of $\A^{gN}$, and in particular a non-empty subset of $U(\overline{\F}_p)$.

We identify the monomials $P_{i,l,f}$ with elements of $\mathcal P$.

Each choice of variable $X_{i,l}$ induces a degree function $\operatorname{deg}_{X_{i,l}}: \mathcal P\rightarrow \Z_{\geq 0}[p^{-1}]$. For instance, $\operatorname{deg}_{X_{i,l}}(P_{i,l,f})$ is either equal to $1$ (if $l\neq 0$ or $i\notin J$) or to $1+p^{r_{f^{-1}(i)}}$ (if $l=0$ and $i\in J$). For $P\in \mathcal P$ we have $\operatorname{deg}_{X_{i,l}}(P)=0$ if and only if $X_{i,l}$ does not contribute to $P$.

\begin{lemma}\label{Lemma: equivalence of monomials}
For two distinct tuples $(i,l,f),(i',l',f')\in \llbracket 1,g\rrbracket\times \llbracket0,N-1\rrbracket\times \mathfrak S_J$, we have $$P_{i,l,f}=P_{i',l',f'}$$ if and only if the following conditions are all satisfied:
\begin{enumerate}
	\item There exists a (necessarily unique) $k_0\in \llbracket 1,g'\rrbracket$ such that $r_{k_0}=0$,
	\item $l=l'=0$,
	\item $f'=f\circ \tau_{(0,k_0)}$ where $\tau_{(0,k_0)}$ denotes the transposition exchanging $0$ and $k_0$,
	\item $i=f(0)$ and $i'=f(k_0)$.
\end{enumerate}
In particular, if $0\notin \{r_1,\dots,r_{g'}\}$, the monomials $P_{i,l,f}$ are pairwise distinct.
\end{lemma}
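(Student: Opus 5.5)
The plan is to compare exponent vectors. Both $P_{i,l,f}$ and $P_{i',l',f'}$ are monomials in $\mathcal P$ with coefficient $1$. Two such monomials are equal exactly when, for every variable $X_{j,k}$, their degrees $\operatorname{deg}_{X_{j,k}}$ coincide. So I will first record the degree vector of $P_{i,l,f}$. The factor $X_{i,l}$ contributes $1$ to $\operatorname{deg}_{X_{i,l}}$. For each $k\in\llbracket 1,g'\rrbracket$, the factor $X_{f(k),0}^{p^{r_k}}$ contributes $p^{r_k}$ to $\operatorname{deg}_{X_{f(k),0}}$. Since $f$ is injective, the indices $f(1),\dots,f(g')$ are pairwise distinct, so for each $j\in f(\llbracket 1,g'\rrbracket)$ there is exactly one such contribution. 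This gives the degree vector explicitly.

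For the converse direction ("if"), assume conditions (i)--(iv). Then $P_{i,0,f}=X_{f(0),0}\cdot X_{f(k_0),0}^{p^0}\cdot\prod_{k\neq k_0}X_{f(k),0}^{p^{r_k}}$. On the other side, $f'=f\circ\tau_{(0,k_0)}$ gives $f'(0)=f(k_0)$, $f'(k_0)=f(0)$, and $f'(k)=f(k)$ for the remaining $k$. Hence $P_{i',0,f'}=X_{f(k_0),0}\cdot X_{f(0),0}\cdot\prod_{k\neq k_0}X_{f(k),0}^{p^{r_k}}$, which is the same monomial. The two tuples are distinct because $f\neq f'$.

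For the forward direction, suppose $P_{i,l,f}=P_{i',l',f'}$ with $(i,l,f)\neq(i',l',f')$. The key observation is that every exponent occurring is a power $p^{r}$ with $r\in\mathbb Z$ or the value $1$, and a variable receives at most two contributions: one from $X_{i,l}$ and at most one from the product. A degree $\operatorname{deg}_{X_{j,0}}$ can therefore only be $0$, $1$, $p^{r}$, or $1+p^{r}$. By uniqueness of base-$p$ expansions in $\mathbb Z[p^{-1}]$, with care for $p=2$ and $r=0$, which gives $1+1=2=p$, these values determine which contributions are present, except in the coincidence $1+p^0=2$ when $p=2$. I expect this uniqueness bookkeeping to be the main obstacle, particularly for $p=2$.

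To resolve it, I will compare the supports and the multisets of exponents instead. The support of $P_{i,l,f}$ is $\{(i,l)\}\cup\{(f(k),0)\}_{k\ge 1}$. Its total degree counted with multiplicities, $1+\sum_k p^{r_k}$, is the same for both monomials. The main tool is the number of distinct variables in the support. It equals $g'+1$ if $(i,l)\notin\{(f(k),0)\}$, and $g'$ otherwise. The argument then splits into two cases.

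First suppose $(i,l)$ is not of the form $(f(k),0)$ for $k\geq 1$. Then all exponents of $P_{i,l,f}$ are single terms, namely $1$ at $X_{i,l}$ and $p^{r_k}$ at $X_{f(k),0}$. If the same holds for the primed tuple, matching supports and exponents will force $f(k)=f'(k)$ whenever $r_k\neq 0$. If all $r_k\neq0$, this forces $(i,l)=(i',l')$ and $f|_{\llbracket1,g'\rrbracket}=f'|_{\llbracket1,g'\rrbracket}$. Then $f(0)=f'(0)$, because each is the unique element of $J$ missing from the image, and so the tuples are equal, a contradiction. If some $r_{k_0}=0$, there are two variables with exponent $1$, namely $X_{i,l}$ and $X_{f(k_0),0}$, and the primed tuple may swap them. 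This forces $l=l'=0$, and the completion argument then yields (ii)--(iv).

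Second suppose $(i,l)=(f(k),0)$ for some $k\ge1$. Then that variable has exponent $1+p^{r_k}$, and the support has size $g'$. This is possible only if $i\in J$ and $i=f(k)$, which is impossible since $f(0)\neq f(k)$ and $i$ need not be $f(0)$. In fact it is a genuine case, and I will handle it by the same matching. Distinctness of the $r_k$ means at most one exponent equals $1$. Compared against the primed monomial, this yields either equality of the tuples or the swap in (iii). Finally, the "in particular" statement is immediate: without $k_0$ with $r_{k_0}=0$, condition (i) fails.
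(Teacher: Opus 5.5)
Your ``if'' direction is correct, and so is your first case, $(i,l)\notin\{(f(k),0)\}_{k\ge 1}$. There every exponent is $1$ or some $p^{r_k}$, and the count of distinct variables forces the primed tuple into the same case. Matching exponents then yields either equality or exactly the swap of (iii)--(iv).

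The gap is the second case, $(i,l)=(f(k_1),0)$ with $k_1\ge 1$. You first call it impossible and then genuine. Your conclusion that it gives ``either equality of the tuples or the swap in (iii)'' is wrong: the swap in (iii)--(iv) has $i=f(0)$, so it belongs to the first case and never occurs here. What this case actually needs is to locate the exponent $1+p^{r_{k_1}}$ in the primed monomial. That is exactly the $p=2$ bookkeeping you flagged and then set aside, and support size and total degree do not settle it.

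In fact it cannot be settled, because for $p=2$ the statement is false as written. Take $p=2$, $g'=2$, $r_1=0$, $r_2=1$, $J=\{1,2,3\}$, $f(0)=1$, $f(1)=2$, $f(2)=3$, and $f'=f\circ\tau_{(1,2)}$. Then
\[P_{2,0,f}=X_{2,0}\cdot X_{2,0}^{p^{0}}\cdot X_{3,0}^{p^{1}}=X_{2,0}^{2}X_{3,0}^{2}=X_{3,0}\cdot X_{3,0}^{p^{0}}\cdot X_{2,0}^{p^{1}}=P_{3,0,f'},\]
although (iii) and (iv) both fail.

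The paper's own proof has the same hole. After normalising $r_1=0$, it asserts that ``the same arguments as previously'' give $P_{i,l,f}\neq P_{i',l',f'}$ whenever $f'|_{\llbracket 2,g'\rrbracket}\neq f|_{\llbracket 2,g'\rrbracket}$. That needs $1+p^{r_1}=2$ to differ from every $p^{r_k}$, which fails for $p=2$ and $r_k=1$. This is harmless for the paper: in every application the $r_k$ are the values $a-b$ for $(a,b)\in\mathcal B_s$, which are congruent modulo $n\ge 3$, so $0$ and $1$ never both occur.

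The correct statement adds the hypothesis that $p$ is odd or $\{0,1\}\not\subseteq\{r_1,\dots,r_{g'}\}$. The final ``in particular'' claim is unaffected, since it assumes $0\notin\{r_k\}$. Under this hypothesis, $1+p^{r_{k_1}}$ differs from every $p^{r_k}$. So in your second case it must match the exponent $1+p^{r_{k_1'}}$ of the primed monomial, forcing $k_1'=k_1$, $f'(k_1)=f(k_1)$ and $(i',l')=(i,l)$. The remaining distinct exponents $p^{r_k}$ then give $f'|_{\llbracket 1,g'\rrbracket}=f|_{\llbracket 1,g'\rrbracket}$, hence $f'=f$. With that hypothesis your case split works and is quite clean: all coincidences live in the first case, none in the second.
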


\begin{proof}
     Suppose first that $0\notin \{r_1,\dots,r_{g'}\}$ then the numbers $p^{r_1},\dots,p^{r_{g'}}, 1+p^{r_1},\dots, 1+p^{r_{g'}}$ are all distinct. Moreover if $f'\neq f$ there is $k\in \llbracket 1,g'\rrbracket$ with $f(k)\neq f'(k)$ (as bijections with the same finite image, they cannot differ only at $0$). We deduce that $k':=f'^{-1}(f(k))\neq k$ and thus $$\operatorname{deg}_{X_{f(k),0}}(P_{i',l',f'})=\delta_{i'=f(k),l'=0}+p^{r_{k'}}\notin \{p^{r_k},1+p^{r_k}\}\ni \operatorname{deg}_{X_{f(k),0}}(P_{i,l,f}),$$ so $P_{i,l,f}\neq P_{i',l',f'}$. Similarly if $f=f'$ but $(i,l)\neq (i',l')$, then $$\operatorname{deg}_{X_{i,l}}(P_{i,l,f})\in \{1,1+p^{r_1},\dots,1+p^{r_{g'}}\}\not\ni\operatorname{deg}_{X_{i,l}}(P_{i',l',f'}).$$ So if Condition (1) is not satisfied, the monomials $P_{i,l,f}$ are pairwise distinct.

    Suppose now that $0\in \{r_1,\dots,r_{g'}\}$. Upon relabeling, suppose without loss of generality that $r_1=0$. Since the $r_i$ are pairwise distinct, $r_k\neq 0$ for all $k\in \llbracket 2,g'\rrbracket$. The same arguments as previously show that $P_{i,l,f}\neq P_{i',l',f'}$ if either $l$ or $l'$ is nonzero, or if $f'|_{\llbracket 2,g'\rrbracket}\neq f|_{\llbracket 2,g'\rrbracket}$.
    Note that, since $f$ and $f'$ have the same image, $f|_{\llbracket 2,g'\rrbracket}=f'|_{\llbracket 2,g'\rrbracket}$ if and only if $f'\in \{f,f\circ \tau_{(0, 1)}\}$. If $f=f'$ but $i\neq i'$ then $P_{i',0,f'}=\frac{X_{i',0}}{X_{i,0}}P_{i,0,f}\neq P_{i,0,f}$. 
    
    We have thus proven that $P_{i,l,f}\neq P_{i',l',f'}$ if conditions (1), (2) or (3) are not satisfied. Suppose now that conditions (1)--(3) are all satisfied. We then have \begin{align*}P_{i,0,f}&=X_{i,0}X^{r_1}_{f(1),0}\prod\limits_{k=2}^{g'} X_{f(k),0}^{r_k}, \\ P_{i',0,f'}&=X_{i',0}X^{r_1}_{f'(1),0}\prod\limits_{k=2}^{g'} X_{f'(k),0}^{r_k}=X_{i',0}X^{r_1}_{f(0),0}\prod\limits_{k=2}^{g'} X_{f(k),0}^{r_k}.\end{align*} Since $r_1=0$, these two monomials agree if and only if $i=f(0)$ and $i'=f(1)$, i.e. if and only if Condition (4) is satisfied.
\end{proof}

\begin{lemma}\label{Lemma: minors of delta matrix}Let $(\delta_{i,l}^j)_{(i,j,l)\in \llbracket1,g\rrbracket^2\times \llbracket0,N-1\rrbracket}$ be a family of elements of $\overline{\F}_p$. Define the $(g'+1)\times g$-matrix $A_1(\underline{X})\in \Mat_{(g'+1)\times g}(\mathcal P)$ as:
\begin{equation*}A_1(\underline{X}):=
    \begin{pmatrix}
    
		\sum\limits_{l=0}^{N-1}\sum\limits_{i=1}^g\delta_{i,l}^1 X_{i,l} & \dots & \sum\limits_{l=0}^{N-1}\sum\limits_{i=1}^g\delta_{i,l}^g X_{i,l}  \\
		X_{1,0}^{p^{r_1}} & \dots & X_{g,0}^{p^{r_1}}\\
		\dots & \dots & \dots
		\\
		X_{1,0}^{p^{r_{g'}}}& \dots &X_{g,0}^{p^{r_{g'}}}
	\end{pmatrix}
    \end{equation*}
    Then the matrix $A_1$ admits a non-zero $(g'+1)$-minor if and only if one of the following conditions is satisfied:
    \begin{enumerate}
        \item There exists indices $(i,j,l)$ with $l>0$ or $i\neq j$, such that $\delta^j_{i,l}\neq 0$,
        \item there exists indices $(i,j)$ such that $\delta^i_{i,0}\neq \delta^j_{j,0}$,
        \item there exists an index $i$ such that $\delta^i_{i,0}\neq 0$ and $0\notin \{r_1,\dots,r_{g'}\}$.
    \end{enumerate}

\end{lemma}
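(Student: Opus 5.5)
We expand each $(g'+1)$-minor of $A_1(\underline{X})$ by the Leibniz formula and read off its monomials using Lemma~\ref{Lemma: equivalence of monomials}. Fix $J\subset\llbracket1,g\rrbracket$ with $|J|=g'+1$ and let $\Delta_J$ be the minor on the columns indexed by $J$. Index the rows by $\llbracket0,g'\rrbracket$, with row $0$ the $\delta$-row and row $k\geq1$ the row of $X^{p^{r_k}}_{\cdot,0}$. The Leibniz expansion gives
\[
\Delta_J=\sum_{f\in\mathfrak S_J}\epsilon(f)\Big(\sum_{i,l}\delta^{f(0)}_{i,l}X_{i,l}\Big)\prod_{k=1}^{g'}X^{p^{r_k}}_{f(k),0}
=\sum_{f\in\mathfrak S_J}\sum_{i,l}\epsilon(f)\,\delta^{f(0)}_{i,l}\,P_{i,l,f}.
\]
The $P_{i,l,f}$ are monomials in $\mathcal P$, which has the monomials as a basis. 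So $\Delta_J\neq0$ if and only if, after grouping equal monomials, some coefficient survives. Lemma~\ref{Lemma: equivalence of monomials} says exactly when two of them coincide.

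\textbf{Sufficiency.} In case (iii) we have $0\notin\{r_k\}$, so all $P_{i,l,f}$ are distinct. Take $i$ with $\delta^i_{i,0}\neq0$, any $J\ni i$ (possible since $g'+1\leq g$), and $f$ with $f(0)=i$. The coefficient $\epsilon(f)\delta^i_{i,0}$ of $P_{i,0,f}$ is then nonzero.

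In case (i), take $(i,j,l)$ with $\delta^j_{i,l}\neq0$ and $l>0$ or $i\neq j$. Choose $J\ni j$ and $f$ with $f(0)=j$. Now $P_{i,l,f}$ is distinct from every other monomial: if $l>0$ this is condition (ii) of the lemma, and if $l=0$, $i\neq j$ it is condition (iv), which requires $i=f(0)=j$. Hence its coefficient $\epsilon(f)\delta^j_{i,l}$ survives.

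In case (ii) with $0\notin\{r_k\}$, some diagonal entry is nonzero and we are in case (iii). If instead $r_{k_0}=0$, take $i\neq j$ with $\delta^i_{i,0}\neq\delta^j_{j,0}$, a set $J\supseteq\{i,j\}$ (possible because $g'\geq1$ here), and $f$ with $f(0)=i$, $f(k_0)=j$. The monomial $P_{i,0,f}$ coincides only with $P_{j,0,f\circ\tau}$, where $\tau=\tau_{(0,k_0)}$. Since $\epsilon(f\circ\tau)=-\epsilon(f)$, its total coefficient is $\epsilon(f)(\delta^i_{i,0}-\delta^j_{j,0})\neq0$.

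\textbf{Necessity.} If none of (i)–(iii) holds, then $\delta^j_{i,l}=c\,\delta_{ij}\delta_{l0}$ for a constant $c$. Either $c=0$, or $c\neq0$ and some $r_{k_0}=0$. If $c=0$ the first row is zero. If $r_{k_0}=0$, the first row equals $c$ times row $k_0$, so every minor vanishes.

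The main care point is the sign bookkeeping in case (ii). We must check that the paired monomials carry opposite signs, so that they combine to $\delta^i_{i,0}-\delta^j_{j,0}$ rather than a sum. We also need $g'+1\leq g$ to guarantee that a suitable $J$ exists.
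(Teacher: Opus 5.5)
Your proof is correct and is essentially the paper's: expand each minor by the Leibniz formula into the monomials $P_{i,l,f}$, then use Lemma~\ref{Lemma: equivalence of monomials} to find a surviving coefficient. As in the paper, this includes the pairing $P_{i,0,f}=P_{j,0,f\circ\tau_{(0,k_0)}}$ with opposite signs in case (ii). The only difference is necessity: you observe directly that the first row is $0$ or $c$ times the row with $r_{k_0}=0$, whereas the paper reads this off from the vanishing of all coefficients.
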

\begin{proof}

For $J\subset \llbracket 1,g\rrbracket$ a set of indices of cardinal $g'+1$, the minor obtained by only taking the columns of indices in $J$ writes out \begin{equation}\label{eq formula P_J 1}P_J=\sum\limits_{f\in\mathfrak S_J}\sum\limits_{l=0}^{N-1}\sum\limits_{i=1}^g\epsilon(f)\delta_{i,l}^{f(0)}P_{i,l,f}.\end{equation}

Fix a tuple $(i,l,f)\in \llbracket 1,g\rrbracket\times \llbracket 0,N-1\rrbracket\times \mathfrak S_J$.

By Lemma~\ref{Lemma: equivalence of monomials}, if either $l\neq 0$, $i\neq f(0)$ or $0\notin\{r_1,\dots,r_{g'}\}$, then $P_{i,l,f}$ is distinct from all the other monomials $P_{i',l',f'}$, so $P_J=0$ implies $\delta^{f(0)}_{i,l}=0$ in this case. 

On the other hand if $0\in \{r_1,\dots,r_{g'}\}$ (say $0=r_{k_0}$), $l=0$ and $i=f(0)$, then by Lemma~\ref{Lemma: equivalence of monomials} the only other tuple $(i',l',f')$ with $P_{f(0),0,f}=P_{i',l',f'}$ is $(f(k_0),0,f\circ \tau)$ where $\tau$ is the transposition exchanging $0$ and $k_0$. Then $\epsilon(f\circ \tau)=-\epsilon(f)$ so the coefficient of the monomial $P_{f(0),0,f}$ in Formula~\eqref{eq formula P_J 1} is $\epsilon(f) (\delta_{f(0),0}^{f(0)}-\delta_{f(k_0),0}^{f(k_0)})$. 

Running over all triples $(i,l,f)$, it follows that $P_J\neq 0$ if and only if one of the conditions (1), (2) or (3) are satisfied for indices $i,j\in J$ and $l\geq 0$. Running over all choices of $J$, the result follows.
\end{proof}

At this stage, we can already prove a weaker analog of Proposition~\ref{proposition: U delta density max case}:

\begin{proposition}\label{proposition: U delta density}
Let $s_0\in \Z_{\geq 0}$ be such that $|\mathcal B_{s_0}|<g$. Let $\delta \in \Hom_{\breve{Z}_p}(\Lambda_0,\Lambda_{s_0})$. Then, if $[\delta]_{s_0+1}\notin [p^{\lceil\frac{s_0}{n}\rceil}\breve{\Z}_p]_{s_0+1}$, the set
\begin{equation*}
	U_\delta=\{\underline{a}\in U(\overline{\F}_p) \mid \delta(v(\underline{a}))\notin \D\cdot v(\underline{a})\}
\end{equation*} is non-empty.
\end{proposition}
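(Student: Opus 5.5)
The plan is to work entirely at the truncation level $s_0+1$. Since $|\mathcal B_{s_0}|<g$, Lemma~\ref{Lemma: basis of level s} shows that for $M=\D\cdot v(\underline a)$ the subspace $M_{s_0}/M_{s_0+1}\subsetneq\Lambda_{s_0}/\Lambda_{s_0+1}$ has basis $\{[F^{a_k}V^{b_k}v(\underline a)]_{s_0+1}\}_{(a_k,b_k)\in\mathcal B_{s_0}}$. Its $\overline{\F}_p$-coordinates in the basis $[e_{i,s_0}]_{s_0+1}$ are $a_{i,0}^{\sigma^{r_k}}$ with $r_k=a_k-b_k$. These $r_k$ are pairwise distinct by \eqref{eq differences in B_j enumerated}. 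Since $\delta(v(\underline a))\in\Lambda_{s_0}$, it suffices to find $\underline a\in U(\overline{\F}_p)$ with $[\delta(v(\underline a))]_{s_0+1}\notin M_{s_0}/M_{s_0+1}$: then $\delta(v(\underline a))\notin M_{s_0}$, and hence $\delta(v(\underline a))\notin M$. Equivalently, I want the $(g'+1)\times g$ matrix, with $g'=|\mathcal B_{s_0}|$, whose first row is the coordinate vector of $[\delta(v(\underline a))]_{s_0+1}$ and whose other rows are $(a_{i,0}^{\sigma^{r_k}})_i$, to have full rank $g'+1\le g$.

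To set this up, I write $[\delta(e_{i,l})]_{s_0+1}=\sum_j\delta^j_{i,l}[e_{j,s_0}]_{s_0+1}$ with $\delta^j_{i,l}\in\overline{\F}_p$. Because $\delta$ is $\breve\Z_p$-linear and $v(\underline a)=\sum[a_{i,l}]e_{i,l}$, the $j$-th coordinate of $[\delta(v(\underline a))]_{s_0+1}$ is $\sum_{i,l}\delta^j_{i,l}a_{i,l}$. Only $l\le N-1$ matter, and terms with $l$ large enough die in the truncation anyway. This is exactly the evaluation of the matrix $A_1(\underline X)$ of Lemma~\ref{Lemma: minors of delta matrix} at $\underline a$. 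If some $(g'+1)$-minor $P_J\in\mathcal P$ is nonzero, its nonvanishing locus is open dense in $\A^{gN}$, so it meets the nonempty open $U$. This gives $\underline a\in U_\delta$.

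It remains to check that the hypothesis $[\delta]_{s_0+1}\notin[p^{\lceil s_0/n\rceil}\breve\Z_p]_{s_0+1}$ forces one of conditions (1)--(3) of Lemma~\ref{Lemma: minors of delta matrix}. This is the step requiring care. Suppose all three fail. Then $\delta^j_{i,l}=0$ unless $l=0$ and $i=j$, and all $\delta^i_{i,0}$ equal a common constant $c$. Moreover, $c=0$ unless $0\in\{r_k\}$. Hence $[\delta]_{s_0+1}$ sends $e_{i,0}\mapsto c\,e_{i,s_0}$ and kills $e_{i,l}$ for $l\ge1$ modulo $\Lambda_{s_0+1}$; that is, $[\delta]_{s_0+1}=[c\,\pi^{s_0}]_{s_0+1}$ in the coordinates above. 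If $c=0$, this contradicts the hypothesis. If $c\neq0$, then $0=a_k-b_k$ for some $(a_k,b_k)\in\mathcal B_{s_0}$, so $s_0=a_k n$ and $n\mid s_0$. In that case $\pi^{s_0}=p^{s_0/n}$, and $\delta\equiv p^{s_0/n}[c]$ modulo $\Lambda_{s_0+1}$. To see this, use $e_{i,l}\mapsto e_{i,l+s_0}$, and note that the $\sigma$-twists are irrelevant since $e_{i,l}$ with $l\ge1$ are killed. So $[\delta]_{s_0+1}\in[p^{\lceil s_0/n\rceil}\breve\Z_p]_{s_0+1}$, again a contradiction.

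The main subtlety I expect is bookkeeping this identification of $[c\pi^{s_0}]$ with a scalar in $\breve\Z_p$. One must check that a $\breve\Z_p$-linear map killing $e_{i,l}$ for $l\ge1$ and scaling $e_{i,0}$ uniformly agrees modulo $\Lambda_{s_0+1}$ with multiplication by $p^{s_0/n}[c]$. This follows because $p^{s_0/n}e_{i,l}\in\Lambda_{s_0+1}$ for $l\ge1$.
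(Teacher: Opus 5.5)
You follow the paper's proof step for step: you reduce to level $s_0+1$ via Lemma~\ref{Lemma: basis of level s}, rephrase the condition as full rank of $A_1(\underline a)$, and invoke Lemma~\ref{Lemma: minors of delta matrix}. Your contrapositive check of conditions (1)--(3) is slightly cleaner than the paper's list. The paper writes ``not diagonal'' where ``not a scalar matrix'' is meant; the non-scalar diagonal case is caught by condition (2) of the lemma.

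There is, however, one step that neither you nor the paper justifies, and it can fail.

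\textbf{Which coefficients $A_1$ sees.} The coefficients $\delta^j_{i,l}$ entering $A_1$ only have $l\in\llbracket 0,N-1\rrbracket$, because $v(\underline a)$ only involves the $e_{i,l}$ with $l\le N-1$. But $[\delta]_{s_0+1}$ factors through $\Lambda_0/p\Lambda_0$, so it may be non-zero on any $e_{i,l}$ with $l\le n-1$. Failure of (1)--(3) therefore only shows that $[\delta]_{s_0+1}$ kills $e_{i,l}$ for $1\le l\le N-1$ and for $l\ge n$. Your conclusion that it kills every $e_{i,l}$ with $l\ge 1$ requires $N\ge n$.

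\textbf{When this fails.} The inequality $N\ge n$ holds when $g\ge 3$, or when $g=2$ and $2\le m\le n-2$, but not in general, and then the statement itself is false. Take $g=2$, $m=1$, $n=3$, so $N=2$ and $|\mathcal B_1|=1<g$. Let $\delta\colon\Lambda_0\to\Lambda_1$ be the $\breve\Z_p$-linear map with $\delta(e_{1,2})=e_{1,1}$ and $\delta(e_{i,l})=0$ on the other basis vectors ($l\le 2$). Then $[\delta]_2\neq 0$ while $[p\breve\Z_p]_2=0$, so the hypothesis holds. Yet $\delta(v(\underline a))=0$ for every $\underline a$, so $U_\delta=\emptyset$. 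The same happens for $g=1$, $m=2$, $n=5$, $s_0=1$.

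\textbf{How to repair it.} Add a hypothesis such as $N\ge n$, or $[\delta(e_{i,l})]_{s_0+1}=0$ for $N\le l\le n-1$. Alternatively, enlarge the coordinate space to $l\in\llbracket 0,\max(N,n)-1\rrbracket$. This is harmless, since Lemma~\ref{Lemma: a=1 dieudonne lattices generators} only constrains the $a_{i,0}$ and Lemma~\ref{Lemma: minors of delta matrix} does not depend on the range of $l$. With any of these fixes your argument is complete.

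The paper's applications are unaffected. A $\tau_1$-equivariant $\delta$ with $\delta(\Lambda_0)\subset\Lambda_{s_0}$ kills every $e_{i,l}$ with $l\ge 1$ modulo $\Lambda_{s_0+1}$. The map $\delta_2$ of Case~2 kills every $e_{i,l}$ with $l\ge 2$ modulo $\Lambda_{s_0+2}$, and there $N\ge s_0+2$.
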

\begin{proof}
The quotient $\Lambda_{s_0}/\Lambda_{s_0+1}$ is an $\overline{\F}_p$-vector space freely generated by the family $([e_{i,s_0}]_{s_0+1})_{i\in \llbracket1,g\rrbracket}$.
Let $(\delta_{i,l}^{j})_{i,j,l}$ denote the $\overline{\F}_p$-matrix coefficients of $[\delta]_{s_0+1}\in \Hom_{\overline{\F}_p}(\Lambda_0/p\Lambda_0,\Lambda_{s_0}/\Lambda_{s_0+1})$, explicitly defined by $$[\delta(e_{i,l})]_{s_0+1}=\sum\limits_{j=1}^g\delta_{i,l}^{j}[e_{j,s_0}]_{s_0+1}.$$
	
Since $pe_{i,l}=e_{i,l+n}$, in terms of matrix coefficients, $[\delta]_{s_0+1}\notin[p^{\lceil\frac{s_0}{n}\rceil}\breve{\Z}_p]_{s_0+1}$ if and only if one of the following conditions is satisfied:
    \begin{enumerate}
        \item there is $l>0$ with $(\delta_{i,l}^j)_{i,j}$ non-zero,
        \item $n \nmid s_0$ and $(\delta_{i,0}^{j})_{i,j}$ is non-zero,
        \item $n\mid s_0$ and $(\delta_{i,0}^{j})_{i,j}$ is not diagonal.
    \end{enumerate}
    
	Set $g'=|\mathcal B_{s_0}|$ and choose a labeling $\{(a_k,b_k)\mid k\in \llbracket1,g'\rrbracket\}=\mathcal B_{s_0}$. For each $k\in \llbracket1,g'\rrbracket$, let $r_k=a_k-b_k$, so that the operator $F^{a_k}V^{b_k}$ is $\sigma^{r_k}$-linear. In particular, note that $0\in \{r_1,\dots,r_{g'}\}$ if and only if $n$ divides $s_0$.

    For $\underline{a}=(a_{i,l})_{(i,l)}\in U(\overline{\F}_p)$, by Lemma~\ref{Lemma: basis of level s} the vector space $[\D\cdot v(\underline{a})\cap \Lambda_{s_0}]_{s_0+1}$ is freely generated by $$([F^aV^b(v(\underline{a}))]_{s_0+1})_{(a,b)\in \mathcal B_{s_0}}=(\sum\limits_{i=1}^g a_{i,0}^{p^{r_k}}[e_{i,s_0}]_{s_0+1})_{k\in \llbracket1,g'\rrbracket}.$$
Then, expressing the condition $[\delta(v(\underline{a}))]_{s_0+1}\notin [\D\cdot v(\underline{a})]_{s_0+1}$ in the basis $([e_{i,s_0}]_{s_0+1})_{i\in \llbracket1,g\rrbracket}$ of $\Lambda_{s_0}/\Lambda_{s_0+1}$, we see that
    \begin{equation*}
        [\delta(v(\underline{a}))]_{s_0+1}\notin [\D\cdot v(\underline{a})]_{s_0+1}\iff \operatorname{rk}(A_1(\underline{a}))=g'+1 .
    \end{equation*}
    This is satisfied if and only if $P_J(\underline{a})\neq 0$ for some $(g'+1)$-minor $P_J$ of $A_1$. By Lemma~\ref{Lemma: minors of delta matrix}, one of these minors is non-zero if one of the conditions (1), (2), (3) above is satisfied. Thus $\operatorname{rk}(A_1(\underline{a}))=g'+1$ is satisfied on a Zariski open dense subset of $\A_{g,N}$ if $[\delta]_{s_0+1}\notin [p^{\lceil\frac{s_0}{n}\rceil}\breve{\Z}_p]_{s_0+1}$. Since $U$ is also open and dense in $\A_{g,N}$, the result follows.
    \end{proof}

As explained through Example~\ref{example: explicit parameters}.(2), in order to deal with levels $s_0<N$ such that $|\mathcal B_{s_0}|=g$, we need to truncate at a higher level $s_1$ which satisfies $|\mathcal B_{s_1}|<g$. Lemma~\ref{Lemma: Minors of delta matrix bis} below is an analog of Lemma~\ref{Lemma: minors of delta matrix} which allows us to do that.

\begin{definition}
    
We let $\mathcal F=\operatorname{Frac}(\mathcal P)$ denote the fraction field of $\mathcal P$. The degree functions $\operatorname{deg}_{X_{i,l}}$ naturally extend to $\mathcal F$, by $\operatorname{deg}_{X_{i,l}}(\frac{P}{Q})=\operatorname{deg}_{X_{i,l}}(P)-\operatorname{deg}_{X_{i,l}}(Q)$. 

For a given variable $X_{i,l}$, we say that an element $f\in \mathcal F$ is affine with respect to $X_{i,l}$ if it is of the form $f=\frac{P_1+X_{i,l}P_2}{Q}$ with $P_1,P_2,Q\in \mathcal P$ and $\operatorname{deg}_{X_{i,l}}(P_1)=\operatorname{deg}_{X_{i,l}}(P_2)=\operatorname{deg}_{X_{i,l}}(Q)=0$.
\end{definition}

\begin{lemma}\label{Lemma: Minors of delta matrix bis}
	Let $s_0< s_1\in \Z_{\geq 0}$. For any $s\in\llbracket s_0,s_1-1\rrbracket$ and any $\alpha\in \mathcal B_s$, let $r(\alpha)\in\Z$ and let $\tilde{P}_\alpha\in \mathcal F$. Suppose that the $\tilde{P}_\alpha$ are affine in all variables of level $l\geq s_1-s_0$.
	
    Let $(\delta_{i,l}^j)_{(i,j,l)\in \llbracket1,g\rrbracket^2\times \llbracket0,N-1\rrbracket}$ be a family of elements of $\overline{\F}_p$, and define a $(g'+1)\times g$-matrix $A_2(\underline{X})\in \Mat_{(g'+1)\times g}(\mathcal F)$ as:
	
	\begin{equation*}A_2(\underline{X}):=
		\begin{pmatrix}
			
			\sum\limits_{l=0}^{N-1}\sum\limits_{i=1}^g\delta_{i,l}^1 X_{i,l}-\sum\limits_{s=s_0}^{s_1-1}\sum\limits_{\alpha\in \mathcal B_s} \tilde{P}_\alpha X_{1,s_1-s}^{p^{r(\alpha)}} & \dots & \sum\limits_{l=0}^{N-1}\sum\limits_{i=1}^g\delta_{i,l}^g X_{i,l} -\sum\limits_{s=s_0}^{s_1-1}\sum\limits_{\alpha\in \mathcal B_s} \tilde{P}_\alpha X_{g,s_1-s}^{p^{r(\alpha)}} \\
			X_{1,0}^{p^{r_1}} & \dots & X_{g,0}^{p^{r_1}}\\
			\dots & \dots & \dots
			\\
			X_{1,0}^{p^{r_{g'}}}& \dots &X_{g,0}^{p^{r_{g'}}}
		\end{pmatrix}.
	\end{equation*}
Then, if one of the following conditions holds, the matrix $A_2(\underline{X})$ admits a non-zero $(g'+1)$-minor:
\begin{enumerate}
    \item There exists $r\in \Z\setminus\{0\}$ such that $\sum\limits_{\alpha\in \mathcal B_{s_0}, r(\alpha)=r}\tilde{P}_\alpha\neq 0$.
	\item The matrix $(\delta_{i,s_1-s_0}^j)_{(i,j)\in\llbracket1,g\rrbracket}$ is not diagonal.

\end{enumerate}
\end{lemma}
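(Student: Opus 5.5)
The plan is to reduce to the argument of Lemma~\ref{Lemma: minors of delta matrix}. I would fix a subset $J\subset\llbracket 1,g\rrbracket$ with $|J|=g'+1$ and expand the corresponding minor along the first row. This gives
\[
P_J=\sum_{f\in\mathfrak S_J}\epsilon(f)\Big(\sum_{l,i}\delta^{f(0)}_{i,l}X_{i,l}-\sum_{s=s_0}^{s_1-1}\sum_{\alpha\in\mathcal B_s}\tilde P_\alpha X_{f(0),s_1-s}^{p^{r(\alpha)}}\Big)\prod_{k=1}^{g'}X_{f(k),0}^{p^{r_k}}.
\]
The key step is to isolate the dependence on the level-$(s_1-s_0)$ variables $X_{j,s_1-s_0}$, $j\in J$. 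By assumption every $\tilde P_\alpha$ is affine in these variables. Moreover, the terms $X_{f(0),s_1-s}^{p^{r(\alpha)}}$ with $s>s_0$ involve only variables of level $s_1-s<s_1-s_0$. The terms with $s=s_0$ involve $X_{f(0),s_1-s_0}$ raised to the power $p^{r(\alpha)}$.

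So I would regard $P_J$ as a function of a single variable $Y=X_{j,s_1-s_0}$ for a suitable $j\in J$, with coefficients in the subfield of $\mathcal F$ generated by the other variables. After clearing the common denominator of the $\tilde P_\alpha$, which has degree $0$ in $Y$, we get an additive-type expression in $Y$. It is a combination of $Y$, $Y^{p^r}$ for the various exponents $r=r(\alpha)$, and products of such powers with $Y$ coming from the affine parts of the $\tilde P_\alpha$.

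\textbf{Case (2).} Suppose the matrix $(\delta^j_{i,s_1-s_0})$ has a nonzero off-diagonal entry $\delta^{j}_{i,s_1-s_0}$ with $i\neq j$. I would choose $J\ni i,j$ and extract the coefficient of $Y=X_{i,s_1-s_0}$ to the first power. The contribution from the linear row is $\sum_f\epsilon(f)\delta^{f(0)}_{i,s_1-s_0}\prod_k X_{f(k),0}^{p^{r_k}}$. This is the first-row-$\delta$ minor computed in Lemma~\ref{Lemma: minors of delta matrix}, and it is nonzero by condition (1) there, since $i\neq j$.

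The $\tilde P_\alpha$-terms can also contribute linearly in $Y$. They do so either through the affine part of $\tilde P_\alpha$ multiplied by a power $X_{f(0),s_1-s}^{p^{r(\alpha)}}$ with $f(0)\neq i$, or through $\tilde P_\alpha$ times $Y$ with $r(\alpha)=0$ and $f(0)=i$. The second kind is diagonal in nature: its monomials all have $f(0)=i$, whereas the off-diagonal term has $f(0)=j$. I would separate the two by tracking the degree of $X_{j,0}$ versus $X_{i,0}$, as in Lemma~\ref{Lemma: equivalence of monomials}.

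\textbf{Case (1).} Suppose $Q_r:=\sum_{r(\alpha)=r,\,\alpha\in\mathcal B_{s_0}}\tilde P_\alpha\neq0$ for some $r\neq0$. I would look at the coefficient of $X_{j,s_1-s_0}^{p^{r}}$ in $P_J$ for any $j\in J$. The affine parts of the $\tilde P_\alpha$ only produce terms of the form $Y\cdot Y^{p^{r'}}$ or linear terms, and since $r\neq 0$ these have a different $Y$-degree than $p^r$. Hence the coefficient of $Y^{p^r}$ equals $-Q_r^{(0)}\cdot\sum_{f(0)=j}\epsilon(f)\prod_k X_{f(k),0}^{p^{r_k}}$, where $Q_r^{(0)}$ denotes the $Y$-free part of $Q_r$. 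The second factor is a $g'\times g'$ minor of the Moore-type matrix; it is nonzero by Lemma~\ref{Lemma: a=1 dieudonne lattices generators} together with Dedekind independence.

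\textbf{Main obstacle.} The hardest step is making the degree-separation rigorous. The exponents $p^{r(\alpha)}$ may be negative powers of $p$. The affine parts of $\tilde P_\alpha$ could in principle cancel the $Y$-free part of $Q_r$. Terms from levels $s>s_0$ could also collide with the ones being tracked. To handle this, I would work in $\mathcal F$ as a field of fractions, multiply by the common denominator, and use the degree functions $\deg_{X_{j,s_1-s_0}}$, which take values in $\Z_{\ge0}[p^{-1}]$ and are additive. This lets me argue that distinct exponent sets $\{1,\,p^r,\,1+p^{r'}\}$ cannot cancel. If $Q_r^{(0)}=0$ while $Q_r\neq0$, I would instead use the coefficient of $Y^{1+p^r}$, which then equals the linear part of $Q_r$ times the same nonzero minor.
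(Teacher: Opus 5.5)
Your handling of Condition (1) is the paper's argument. You isolate the part of $P_J$ of degree $p^r$ or $1+p^r$ in $Y=X_{j,s_1-s_0}$. Its cofactor $\sum_{f(0)=j}\epsilon(f)\prod_{k}X_{f(k),0}^{p^{r_k}}$ is nonzero simply because its monomials are pairwise distinct (Lemma~\ref{Lemma: equivalence of monomials}); Lemma~\ref{Lemma: a=1 dieudonne lattices generators} is about exponents spaced by $n$ and is not needed.

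One slip: $1+p^{r'}=p^r$ does occur, namely for $p=2$, $r=1$, $r'=0$. So the $Y^{p^r}$-coefficient can pick up the $Y$-linear part of $\sum_{r(\alpha)=0}\tilde P_\alpha$, and only the $Y^{1+p^r}$-coefficient is always clean. This is harmless where Condition (1) is actually applied: there $n\nmid s_0$, so $0\notin r(\mathcal B_{s_0})$. The paper's proof has the same blind spot.

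The genuine gap is in Condition (2). Restrict, as the paper does, to the part $P_{J,i}$ of $P_J$ of degree $1$ in $X_{i,s_1-s_0}$ and degree $0$ in the other $X_{k,s_1-s_0}$. Its ``diagonal'' piece is $-\bigl(\sum_{\alpha\in\mathcal B_{s_0},\,r(\alpha)=0}P_\alpha\bigr)\sum_{f(0)=i}\epsilon(f)P_{i,s_1-s_0,f}$. Here $P_\alpha$, the part of $\tilde P_\alpha$ constant in all level-$(s_1-s_0)$ variables, is an arbitrary element of $\mathcal F$ in the remaining variables, $X_{i,0}$ and $X_{j,0}$ included. So this piece is not a combination of monomials with $f(0)=i$. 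No comparison of $X_{j,0}$- versus $X_{i,0}$-degrees inside this single coefficient can separate it from the $\delta$-part. Indeed, let $Q_i^k$ be the quotient of the $f(0)=k$ sum by the $f(0)=i$ sum, as in the paper. If $\sum_{r(\alpha)=0}P_\alpha=\delta^i_{i,s_1-s_0}+\sum_{k\in J\setminus\{i\}}\delta^k_{i,s_1-s_0}Q_i^k$, the two pieces cancel exactly. The missing idea is that the same $\tilde P_\alpha$ occur in every column. Assuming $P_{J,i}=0$ for all $i\in J$ expresses one rational function both through $i$ and through $j$. Since $Q_i^j$ has negative $X_{j,0}$-valuation while every $Q_j^k$ has positive $X_{j,0}$-valuation, $\delta^j_{i,s_1-s_0}\neq 0$ becomes impossible.

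There is a second problem. You list the contributions of the $Y$-linear parts of the $\tilde P_\alpha$, multiplied by $X_{f(0),s_1-s}^{p^{r(\alpha)}}\prod_kX_{f(k),0}^{p^{r_k}}$, but never dispose of them. For $s=s_0$ they drop out once you also impose degree $0$ in $X_{f(0),s_1-s_0}$. For $s>s_0$, however, they lie in exactly the same degree as the $\delta$-term, so the degree bookkeeping of your last paragraph cannot remove them. Under the hypotheses as literally stated they can cancel it. For instance, take $g=2$, $g'=1$, $m=1$, $n=4$ (so $N=3$), $s_0=0$, $s_1=2$. Let $\delta^2_{1,2}=1$ be the only nonzero $\delta$, and let all $\tilde P_\alpha$ vanish except for $\beta=(1,0)\in\mathcal B_1$, where
\[\tilde P_\beta=\frac{X_{1,2}\,u_1}{X_{2,1}^{q}u_1-X_{1,1}^{q}u_2},\qquad q=p^{r(\beta)},\quad u_j=X_{j,0}^{p^{r_1}}.\]
This is affine in all variables of level $\geq 2$, and Condition (2) holds. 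Yet the first row of $A_2$ is $\bigl(-\tilde P_\beta X_{1,1}^{q},\,X_{1,2}-\tilde P_\beta X_{2,1}^{q}\bigr)$, and the only $2$-minor vanishes identically.

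What makes Condition (2) work is extra information available where it is applied. There $\delta$ commutes with $\tau_1$, so $\delta(\Lambda_l)\subset\Lambda_{s_0+l}$. Hence $[\delta(v(\underline a))]_{s_1}$, the vectors $[F^aV^bv(\underline a)]_{s_1}$, and therefore all $\tilde P_\alpha=P_{(a,b)}$ involve only variables of level $<s_1-s_0$. The paper's proof uses this tacitly when it asserts that for $s>s_0$ these terms contribute nothing to $P_{J,i}$. Granted it, your $Y$-linear part is exactly the paper's $P_{J,i}$, and the two-column valuation argument above finishes the proof.
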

\begin{proof}
	For $J\subset \llbracket1,g\rrbracket$ a set of indices of cardinal $g'+1$, let $P_J$ denote the associated $g'+1$-minor of $A_2(\underline{X})$. Then $P_J$ writes out explicitly:
	\begin{equation*}
		P_J=\sum\limits_{f\in \mathfrak S_J}\epsilon(f)\bigg(\sum\limits_{l=0}^{N-1}\sum\limits_{i=1}^g\delta_{i,l}^{f(0)}P_{i,l,f}-\sum\limits_{s=s_0}^{s_1-1} \sum\limits_{\alpha\in \mathcal B_s}\tilde{P}_\alpha X_{f(0),s_1-s}^{p^{r(\alpha)}}\prod\limits_{k=1}^{g'}X^{p^{r_k}}_{f(k),0}\bigg).	
	\end{equation*}
We decompose $P_J$ as a (fractional) polynomial in the variables $(X_{i,s_1-s_0})_{i\in \llbracket1,g\rrbracket}$, and verify that, when Condition (1) or (2) is satisfied, it admits a non-zero term, hence is non-zero.
    
The assumption $s_1>s_0$ ensures that, for any $f\in\mathfrak S_J$ and $i\neq j$, the variable $X_{i,s_1-s_0}$ (resp. $X_{j,s_1-s_0}$) has degree exactly $1$ (resp. $0$) in $P_{i,s_1-s_0,f}$. It also ensures that $X_{i,s_1-s_0}$ has degree $p^{r(\alpha)}$ (resp. $0$) in $X_{f(0),s_1-s_0}^{p^{r(\alpha)}}\prod\limits_{k=1}^{g'}X^{p^{r_k}}_{f(k),0}=X_{f(0),s_1-s_0}^{p^{r(\alpha)}-1}P_{f(0),s_1-s_0,f}$ if $f(0)=i$ (resp. $f(0)\neq i$). 

Suppose first that Condition (1) is satisfied and let $r$ be as in Condition (1). Let $i\in J$, and let $P_{J,i,r}$ denote the component of $P_J$ made of the terms in which the variable $X_{i,s_1-s_0}$ has degree either $p^r$ or $1+p^r$. Since $r\neq 0$, we have $1\notin \{p^r,1+p^r\}$, and since the $P_\alpha$ are affine functions on the variables of level $s_1-s_0$, we deduce that
\begin{equation*}
    P_{J,i,r}=\bigg(X^{p^r-1}_{i,s_1-s_0}\sum_{\alpha\in \mathcal B_{s_0},r(\alpha)=r}\tilde{P}_\alpha\bigg)\sum_{f\in\mathfrak S_J, f(0)=i}P_{i,s_1-s_0,f}.
\end{equation*} This is non-zero by Condition (1) and Lemma~\ref{Lemma: equivalence of monomials}, hence $P_J$ is non-zero.

Suppose now that Condition (2) is satisfied. We consider $P_{J,i}$ the component of $P_J$ in which $X_{i,s_1-s_0}$ has degree $1$, and $X_{j,s_1-s_0}$ has degree $0$ for all $j\neq i$.

Let $P_\alpha$ denote the component of $\tilde{P}_\alpha$ which is constant with respect to every variable of level $s_1-s_0$.  Then decomposing $\tilde P_\alpha X_{f(0),s_1-s}^{p^{r(\alpha)}}\prod_{k=1}^{g'}X^{p^{r_k}}_{f(k),0}$ as a polynomial on these variables, its component occurring in $P_{J,i}$ is $P_\alpha P_{i,s_1-s_0,f}$ if $s=s_0$, $r(\alpha)=0$, $f(0)=i$, and it is $0$ otherwise.

 So the component $P_{J,i}$ of $P_J$ writes out: 
\begin{equation*}
    P_{J,i}=\sum\limits_{f\in \mathfrak S_J}\epsilon(f)\delta_{i,l}^{f(0)}P_{i,s_1-s_0,f}-\sum\limits_{\alpha\in \mathcal B_{s_0}, r(\alpha)=0}P_\alpha\sum\limits_{f\in \mathfrak S_J, f(0)=i} \epsilon(f)P_{i,s_1-s_0,f}. 
\end{equation*}

We deduce that
\begin{equation}\label{eq P alpha formula}
	P_{J,i}=0\iff \sum_{\alpha\in \mathcal B_{s_0}, r(\alpha)=0} P_\alpha = \frac{\sum\limits_{f\in \mathfrak S_J}\epsilon(f)\delta_{i,s_1-s_0}^{f(0)}P_{i,s_1-s_0,f}}{\sum\limits_{f\in \mathfrak S_J, f(0)=i} \epsilon(f)P_{i,s_1-s_0,f}}.
\end{equation}
For $i\neq k$ two elements of $J$, let $$Q_i^k:=\frac{\sum\limits_{f\in \mathfrak S_J,f(0)=k}\epsilon(f)P_{i,s_1-s_0,f}}{\sum\limits_{f\in \mathfrak S_J, f(0)=i} \epsilon(f)P_{i,s_1-s_0,f}}.$$

This is a function in the variables $(X_{j,0})_{j\in J}$.
The right-hand term in  Equation~\eqref{eq P alpha formula}  rewrites 
\begin{equation}\label{eq P alpha formula 2}\frac{\sum\limits_{f\in \mathfrak S_J}\epsilon(f)\delta_{i,s_1-s_0}^{f(0)}P_{i,s_1-s_0,f}}{\sum\limits_{f\in \mathfrak S_J, f(0)=i} \epsilon(f)P_{i,s_1-s_0,f}}=\delta_{i,s_1-s_0}^i+\sum\limits_{k\in J\setminus\{i\}}\delta_{i,s_1-s_0}^{k}Q_i^k.\end{equation}

Note that, if $f(0)=i\neq k$, then $\operatorname{deg}_{X_{k,0}}(P_{i,s_1-s_0,f})=p^{r_{f^{-1}(k)}}$, whereas, if $f(0)=k$, $\operatorname{deg}_{X_{k,0}}(P_{i,s_1-s_0,f})=0$. Therefore, with $r_{\min}=\operatorname{min}(r_1,\dots,r_{g'})$,  \begin{align*}
    \operatorname{val}_{X_{k,0}}(Q_i^k) &=-p^{r_{\min}}<0,\\ \operatorname{val}_{X_{i,0}}(Q_i^k) &=p^{r_{\min}}>0, \\
    \operatorname{val}_{X_{j,0}}(Q_i^k) &=0 \, \text{ for } j\in J\setminus \{i,k\}.
\end{align*}
In particular, from Equations~\eqref{eq P alpha formula} and~\eqref{eq P alpha formula 2} we deduce that, if $P_{J,i}=0$, the constant term of $\sum_{\alpha\in \mathcal B_{s_0}, r(\alpha)=0} P_\alpha$ is $\delta_i^i$. 

Now assume that $P_{J_,i}=0$ for all $i\in J$. Then all the $\delta_i^i$ are equal (and equal to $0$ if $0\notin r(\mathcal B_{s_0})$), and 
\begin{equation}\label{eq P alpha formula 3}
    \sum_{\alpha\in \mathcal B_{s_0}, r(\alpha)=0} P_\alpha-\delta_i^i=\sum\limits_{k\in J\setminus\{i\}}\delta^k_{i,s_1-s_0}Q^k_i.
\end{equation}

Assume by contradiction that $\delta_{i,s_1-s_0}^j\neq 0$ for some $j\neq i$ in $J$. Then by Equation~\eqref{eq P alpha formula 3}, $$\operatorname{val}_{X_{j,0}}(\sum_{\alpha\in \mathcal B_{s_0}, r(\alpha)=0} P_\alpha-\delta_i^i)\leq 0.$$ But, since we also suppose that $P_{J,j}=0$, Equation~\eqref{eq P alpha formula 3} also holds when replacing $i$ by $j$, and using $\delta_i^i=\delta_j^j$ we get $$\operatorname{val}_{X_{j,0}}(\sum_{\alpha\in \mathcal B_{s_0}, r(\alpha)=0} P_\alpha-\delta_i^i)= \operatorname{val}_{X_{j,0}}(\sum_{k\in J\setminus\{j\}}\delta^k_{j,s_1-s_0}Q^k_j)> 0,$$ a contradiction. 

We deduce that if $P_{J,i}=0$ for all $i\in J$ then $(\delta_{i,s_1-s_0}^j)_{(i,j)\in J^2}$ is diagonal. By contrapositive and by varying $J$ over all subsets of $\llbracket1,g\rrbracket$ of cardinal $g'+1$, this concludes the proof.
\end{proof}

We can now prove Proposition~\ref{proposition: U delta density max case}, and thus conclude the proof of Theorem~\ref{Theorem: generic automorphism group}.
\begin{proof}[Proof of Proposition~\ref{proposition: U delta density max case}]
If $|\mathcal{B}_{s_0}|<g$, these are particular cases of Proposition~\ref{proposition: U delta density}, so we suppose that $|\mathcal{B}_{s_0}|=g$.

Let $s_1>s_0$ be minimal such that $|\mathcal B_{s_1}|<g$.  Note that $s_1$ exists because $s_0<N$. Since for any $s$, $|\mathcal B_s|\leq |\mathcal B_{s+m}|$, we have $s_1-s_0 < m<n$. Therefore $\Lambda_{s_0}/\Lambda_{s_1+1}$ is a $p$-torsion $\breve{\Z}_p$-module, hence naturally an $\overline{\F}_p$-vector space with basis $([e_{i,s}]_{s_1+1})_{i\in \llbracket 1,g\rrbracket,s\in \llbracket s_0,N-1\rrbracket}$.

Let $(\delta_{i,l}^{j,s})$ denote the $\overline{\F}_p$-matrix coefficients of $[\delta]_{s_1+1}$, explicitly defined by $$[\delta(e_{i,l})]_{s_1+1}=\sum\limits_{j=1}^g\sum\limits_{s=s_0}^{s_1}\delta_{i,l}^{j,s}[e_{j,s}]_{s_1+1}.$$

	The projection $[\cdot]_{s_1}:\Lambda_{s_0}/\Lambda_{s_1+1}\rightarrow\Lambda_{s_0}/\Lambda_{s_1}$ together with the section mapping $[e_{j,s}]_{s_1}$ to $[e_{j,s}]_{s_1+1}$ induces a decomposition $$\Lambda_{s_0}/\Lambda_{s_1+1}=\Lambda_{s_0}/\Lambda_{s_1}\oplus \Lambda_{s_1}/\Lambda_{s_1+1}.$$

  On the one hand, by assumption on $s_1$ and by Lemma~\ref{Lemma: basis of level s}, for any $\underline{a}\in U(\overline{\F}_p)$, the vector space $\Lambda_{s_0}/\Lambda_{s_1}$ is freely generated by the vectors $(
  [F^aV^b(v(\underline{a}))]_{s_1}$ where $ s\in \llbracket s_0,s_1-1\rrbracket$ and $(a,b)\in \mathcal B_s$. 
  
  Since, for $(a,b)\in\mathcal B_s$, $F^aV^b(e_{i,l})=e_{i,l+s}$, the projected vectors $[F^aV^b(v(\underline{a}))]_{s_1}$ only depend on the variables $a_{i,l}$ of level $l<s_1-s\leq s_1-s_0$.  The truncation $[\delta(v(\underline{a}))]_{s_1}$ depends linearly on $\underline{a}$. So writing $$[\delta(v(\underline{a}))]_{s_1}=\sum\limits_{s=s_0}^{s_1-1}\sum\limits_{(a,b)\in \mathcal B_s} P_{(a,b)}(\underline{a})[F^a V^b(v(\underline{a}))]_{s_1},$$
    the functions $P_{(a,b)}$ are affine on the variables of level $l\geq s_1-s_0$.

For any $s\in \Z_{\geq 0}$ and $\alpha=(a,b)\in\mathcal B_s$, set $r(\alpha)=r(a,b)=a-b$. It is the integer such that $F^aV^b$ is $\sigma^{r(\alpha)}$-linear. Note that, restricted to a given $\mathcal B_s$, the map $r|_{\mathcal B_s}$ is injective, and that $0\in r(\mathcal B_s)$ if and only if $n\mid s$. We then have for $(a,b)\in \mathcal B_s$: 
$$[F^aV^b(v(\underline{a}))]_{s_1+1}=[F^aV^b(v(\underline{a}))]_{s_1} + \sum\limits_{j=1}^g a_{j,s_1-s}^{p^{r(a,b)}}[e_{i,s_1}]_{s_1+1}.$$ 
Therefore \begin{align*}&\bigg[\delta(v(\underline{a}))-\sum\limits_{s=s_0}^{s_1-1}\sum\limits_{(a,b)\in \mathcal B_s} P_{(a,b)}(\underline{a})F^a V^b(v(\underline{a}))\bigg]_{s_1+1}\\  = \quad & \sum\limits_{j=1}^g \bigg(\sum\limits_{l=0}^{N-1}\sum\limits_{i=1}^g\delta_{i,l}^{j,s_1} a_{i,l}-\sum\limits_{s=s_0}^{s_1-1}\sum\limits_{(a,b)\in \mathcal B_s} P_{(a,b)}(\underline{a}) a_{j,s_1-s}^{p^{r(a,b)}}\bigg)[e_{j,s_1}]_{s_1+1}.\end{align*}

    On the other hand, set $g'=|\mathcal B_{s_1}|$ and let $r_1,\dots,r_{g'}$ be a labeling of $\{r(a,b)\mid (a,b)\in \mathcal B_{s_1}\}$. By Lemma~\ref{Lemma: basis of level s} again, for $\underline{a}\in U(\overline{\F}_p)$ the vector subspace $[\D\cdot v(\underline{a})\cap \Lambda_{s_1}]_{s_1+1}\subset \Lambda_{s_1}/\Lambda_{s_1+1}$ is freely generated by $([F^aV^b(v(\underline{a}))]_{s_1+1})_{(a,b)\in \mathcal B_{s_1}}=(\sum\limits_{i=1}^g a_{i,0}^{p^{r_k}}[e_{i,s_1}]_{s_1+1})_{k\in \llbracket1,g'\rrbracket}$.

	We deduce
    \begin{equation*}
        [\delta(v(\underline{a}))]_{s_1+1}\notin [\D\cdot v(\underline{a})]_{s_1+1}\iff \operatorname{rk}(A_2(\underline{a}))= g'+1.
    \end{equation*} This is satisfied if and only if $P_J(\underline{a})\neq 0$ for some $(g'+1)$-minor $P_J$ of $A_2$. 
    \begin{enumerate}
        \item If $[\delta]_{s_0+1}\neq 0$, then there is $(a,b)\in \mathcal{B}_{s_0}$ such that $P_{(a,b)}\neq 0$, and if $n\nmid s_0$ then $0\notin r(\mathcal B_{s_0})$. Since the function $r$ is injective on $\mathcal B_{s_0}$ this suffices to check that Condition (1) of Lemma~\ref{Lemma: Minors of delta matrix bis} is satisfied in this case.
        \item If $\tau_1\delta=\delta\tau_1$ the matrix coefficients of $\delta$ satisfy $\delta_{i,l+1}^{j,s+1}=(\delta_{i,l}^{j,s})^{\sigma^b}$ for all $i,j,l,s$. In particular $\delta(\Lambda_0)\subset \Lambda_{s_0}$ implies that $\delta_{i,l}^{j,s_0}=0$ for all $l>0$. If moreover $n|s_0$ and $[\delta]_{s_0+1}\notin [p^\frac{s_0}{n}\breve{\Z}_p]_{s_0+1}$, the matrix $(\delta_{i,0}^{j,s_0})_{i,j}$ is not diagonal, and thus by $\tau_1$-commutativity neither is $(\delta_{i,s_1-s_0}^{j,s_1})_{i,j}$. Therefore Condition (2) of Lemma~\ref{Lemma: Minors of delta matrix bis} is satisfied.
    \end{enumerate} 
    Either way, by Lemma~\ref{Lemma: Minors of delta matrix bis}, one of the $(g'+1)$-minors of $A_2$ is non-zero (as an element of $\mathcal F$). Therefore $\operatorname{rk}(A_2(\underline{a}))= g'+1$ is satisfied for all $\overline{\F}_p$-points of a Zariski open and dense subset of $\A_{g,N}$, hence of $U$, which concludes the proof.
\end{proof}

\section{Torsion in the generic automorphism group}\label{Section: generic torsion}
In this section, we build on the results from Section \ref{Section: automorphisms of lattices} to understand how torsion in the automorphism group of the universal $p$-divisible group over $\M$ looks like.

We keep the notation as in the previous sections, in particular we have integers $n \geq 3$ and $m \geq 1$ such that $\gcd(m,n)=1$. Recall we have fixed the division algebra $D$ over $\Q_p$ whose invariant is $\frac{m}{n}$, with ring of integers $\O_D \subset D$ and uniformizer $\pi \in \O_D$.

For $g \geq 1$ and $s \geq 1$, we are going to study torsion in the congruence subgroups 
\[V_s = 1+ \pi^s\Mat_g(\O_D) \subset \GL_g(\O_D),\]
which necessarily needs to be of $p$-power order because $V_1$ is a pro-$p$-group. We treat the case $g=1$ separately because in this case we can be more precise.

\begin{lemma}\label{Lemma: torsion g=1}
    Assume that $g=1$, i.e. $V_s=1+\pi^s\O_D$. If $p-1 \nmid n$, then $V_1$ is torsion-free. If $p-1 \mid n$, then all $p$-torsion in $V_1$ occurs in $V_{\frac{n}{p-1}} \setminus V_{\frac{n}{p-1}+1}$ and thus $V_s$ is torsion-free for all $s \geq \frac{n}{p-1}+1$.
\end{lemma}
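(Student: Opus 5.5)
Since $V_1$ is a pro-$p$ group, any torsion element of $V_1$ has $p$-power order. It therefore suffices to show the following: if $x \in V_1$ satisfies $x^p = 1$ and $x \neq 1$, then $\operatorname{val}(x-1) = \frac{1}{p-1}$. This equality forces $p-1 \mid n$, because $\operatorname{val}$ takes values in $\frac1n\Z$. Equivalently, writing $x \in V_s \setminus V_{s+1}$, it forces $s = \frac{n}{p-1}$. The whole argument takes place inside the field $K = \Q_p(x) \subset D$, which is commutative.

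Write $x = 1+y$ with $y \neq 0$ and $\operatorname{val}(y) = s/n > 0$, where $s \ge 1$. Since $y$ commutes with itself, the binomial expansion in $\O_D$ gives
\[0 = (1+y)^p - 1 = \sum_{k=1}^{p-1} \binom{p}{k} y^k + y^p.\]
The valuation $\operatorname{val}$ is multiplicative on $D^\times$, and it is an honest (ultrametric) valuation restricted to the commutative subfield $\Q_p(y)$. For $1 \le k \le p-1$ the terms satisfy $\operatorname{val}\bigl(\binom pk y^k\bigr) = 1 + k\operatorname{val}(y)$, so among them the term $k=1$ has strictly minimal valuation $1+\operatorname{val}(y)$. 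The last term has valuation $p\operatorname{val}(y)$. The sum can only vanish if the minimum is attained at least twice, and this can only happen through $1 + \operatorname{val}(y) = p\operatorname{val}(y)$. Hence $\operatorname{val}(y) = \frac1{p-1}$.

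Both claims of the lemma follow. If $p-1 \nmid n$, no element of $\frac1n\Z$ equals $\frac1{p-1}$, so there is no nontrivial element of order $p$. Then no nontrivial torsion exists at all, since some power of a nontrivial $p$-power torsion element would have order exactly $p$. So $V_1$ is torsion-free. If $p-1 \mid n$, every element of order $p$ lies in $V_{\frac{n}{p-1}} \setminus V_{\frac n{p-1}+1}$. For $s \ge \frac n{p-1}+1$, any nontrivial torsion element of $V_s$ would have a power of order $p$ that still lies in $V_s$, because $V_s$ is a group. This is impossible, so $V_s$ is torsion-free.

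The only point needing care is the valuation argument in the noncommutative algebra $D$. I will handle it by restricting to the commutative field $\Q_p(y) \subset D$, where the valuation induced from $D$ is the unique extension of $\operatorname{val}_p$. The phrase \emph{all $p$-torsion} means elements of order dividing $p$; higher $p$-power torsion reduces to this case by taking powers, as above. I do not expect a genuine obstacle.
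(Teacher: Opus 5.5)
Your proof is correct and follows the paper's argument. You expand $(1+y)^p-1$ binomially, note that the term $py$, of valuation $1+\operatorname{val}(y)$, strictly dominates the middle terms, conclude that vanishing forces a tie with $y^p$ (so $\operatorname{val}(y)=\frac{1}{p-1}$), and reduce general torsion to order $p$ via the pro-$p$ property; your restriction to $\Q_p(y)$ is a harmless extra precaution, since the valuation on $D$ is already ultrametric.
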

\begin{proof}
    By induction, it suffices to consider the case of $p$-torsion. Thus assume $1+\pi^s\xi \in V_s\setminus V_{s+1}$, i.e. $\xi \in \O_D^\times$, and consider the expression
    \begin{equation}\label{eq p-power expansion}
        (1+\pi^s\xi)^p - 1= p\pi^s\xi + \binom{p}{2}(\pi^s\xi)^2 + \cdots + (\pi^s\xi)^p.
    \end{equation}
Comparing the valuations of each summand, we see that the first valuation $n+s$ is strictly smaller than all others, except possibly the last, which is $ps$. Hence $n+s \neq ps$ implies $\operatorname{val}((1+\pi^s\xi)^p-1) = \frac{\min(n+s,ps)}{n} < \infty$, hence $(1+\pi^s\xi)^p \neq 1.$
If $p-1 \nmid n$, we thus obtain that $V_1$ is torsion-free. If $p-1 \mid n$, then all $p$-torsion in $V_1$ occurs in $V_{\frac{n}{p-1}}$.
\end{proof}

For later use, we make the following special case even more precise. 
\begin{lemma}\label{Lemma: special case p=2}
    Assume $g=1$ and $p=2$. Then the $2$-torsion in $V_1$ consists of $\{\pm 1\}$. Furthermore, any primitive $4$-torsion in $V_1$ occurs in $V_{\frac{n}{2}} \setminus V_{\frac{n}{2}+1}$. In particular, there is no primitive $4$-torsion in $V_1$ if $n$ is odd.
\end{lemma}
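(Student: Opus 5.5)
We argue directly with valuations in $\O_D$, refining the computation from the proof of Lemma~\ref{Lemma: torsion g=1} for $p=2$. There, with $p=2$ and $x=1+\pi^s\xi\in V_s\setminus V_{s+1}$ with $\xi\in\O_D^\times$, we have
\[x^2-1=2\pi^s\xi+\pi^{2s}\xi^2 .\]
The two summands have valuations $(n+s)/n$ and $2s/n$, so $x^2=1$ forces $n+s=2s$, i.e.\ $s=n$. Since $2=\pi^n u$ for a unit $u\in\O_D^\times$, write $x=1+\pi^n\xi$. Then
\[x^2-1=\pi^n\xi(2+\pi^n\xi)=\pi^{2n}\xi(u'+\xi),\]
where $u'$ is the unit with $2=\pi^n u'$ (up to the conjugation needed to move $\pi^n$ past $\xi$). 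Here $\pi^n=p$ is central, so in fact $u'=1$ after choosing $\pi^n=2$. Thus $x^2=1$ exactly when $\xi^2+2\xi\cdot\pi^{-n}\cdot\pi^n=0$, i.e.\ $\xi(\xi+\pi^{-n}\cdot 2)=0$. Since $\O_D$ is a domain and $\xi\neq 0$, this gives $\xi=-2/\pi^n=-1$. Hence $x=1-\pi^n=1-2=-1$.

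Together with $x=1$ (the case $s=\infty$), the $2$-torsion in $V_1$ is $\{\pm1\}$; note that $-1=1-2\in V_n\subseteq V_1$. I would phrase this cleanly as: if $x^2=1$, write $x=1+y$, so $y^2+2y=0$, whence $y(y+2)=0$ and $y\in\{0,-2\}$. This one-line domain argument replaces the valuation bookkeeping entirely. I will use it as the first step.

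For primitive $4$-torsion, let $x\in V_1$ with $x^4=1$ and $x^2\neq1$. By the first step, $x^2=-1$. Writing $x=1+\pi^s\xi$ with $\xi\in\O_D^\times$ gives $x^2-1=-2$, which has valuation exactly $1$. On the other hand, $x^2-1=2\pi^s\xi+\pi^{2s}\xi^2$. If $2s\neq n+s$, the valuation of the right side is $\min(n+s,2s)/n$. For this to equal $1=n/n$ we would need $\min(n+s,2s)=n$, hence $2s=n$; but then $2s=n<n+s$, which is consistent and gives $s=n/2$. If instead $2s=n+s$, then $s=n$, and the right side has valuation at least $2>1$, which is impossible. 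Hence $s=n/2$, so $x\in V_{n/2}\setminus V_{n/2+1}$. In particular $n$ must be even, and for odd $n$ there is no primitive $4$-torsion.

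The main point requiring care is the valuation comparison $\min(n+s,2s)$. When the two valuations differ, the valuation of the sum is the minimum, since $\operatorname{val}$ is a valuation on the division algebra $D$. No step seems genuinely obstructive.
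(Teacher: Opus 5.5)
Your proof is correct and is essentially the paper's argument. The paper also reduces $x^2=1$ to the domain identity $p^2\xi(1+\xi)=0$, after first placing the $2$-torsion in $V_n$ via Lemma~\ref{Lemma: torsion g=1}; your factorization $y(y+2)=0$ makes that preliminary step unnecessary. For primitive $4$-torsion, the paper compares $x^2=-1\equiv 1+p$ with $(1+\pi^s\xi)^2\equiv 1+(\pi^s\xi)^2$ modulo $p\pi$, which is your valuation comparison in congruence form. You can simply delete the muddled first paragraph about $u'$ and conjugation, since $\pi^n=p$ holds exactly and $p$ is central.
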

\begin{proof}
    From the previous lemma, we know that any $2$-torsion in $V_1$ occurs in $V_n$. So let $1 + \pi^n\xi=1+p\xi \in V_n \setminus V_{n+1}$ be $2$-torsion. Then
    \begin{equation*}
        0=(1+p\xi)^2 -1 = 2p\xi + (p\xi)^2 = p^2\xi+ p^2\xi^2
    \end{equation*}
    which implies that $\xi \in \{0,-1\}$, so the $2$-torsion in $V_1$ is as claimed.
    
    Now assume that $1+ \pi^s\xi \in V_s\setminus V_{s+1}$ is $4$-torsion but not $2$-torsion, i.e.~$(1+ \pi^s\xi)^2=-1\equiv 1+p\pmod{p\pi}$. On the other hand $(1+ \pi^s\xi)^2=1+2\pi^s \xi+\pi^{2s}\xi^2\equiv 1+\pi^{2s}\xi^2\pmod{p\pi}$. Thus $s = \frac{n}{2}$.
\end{proof}

\begin{lemma}\label{Lemma: torsion g geq 2}
    Assume $g \geq 2$, i.e. $V_s = 1+\pi^s\Mat_g(\O_D)$. Then $V_s$ is torsion free for all $s\geq \lfloor \frac{n}{p-1}\rfloor+1$.
\end{lemma}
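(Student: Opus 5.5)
Since $V_s$ is a pro-$p$ group for $s\geq 1$, any torsion element has $p$-power order. If it is non-trivial, some power of it has order exactly $p$, and that power still lies in $V_s$. So it suffices to show that $V_s$ contains no element of order $p$ when $s \geq \lfloor \frac{n}{p-1}\rfloor+1$. The plan is to copy the valuation argument of Lemma~\ref{Lemma: torsion g=1}, working with matrices instead of elements of $\O_D$.

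Take $\gamma = 1+\pi^t X \in V_s$ with $\gamma \neq 1$. Here $t\geq s$ is chosen maximal, so that $X\in \Mat_g(\O_D)$ has some entry in $\O_D^\times$, i.e.\ $X \notin \pi\Mat_g(\O_D)$. The filtration $\pi^k\Mat_g(\O_D)$ is a two-sided ideal filtration: $\pi$ normalizes $\O_D$, so $\pi^a\Mat_g(\O_D)\pi^b\Mat_g(\O_D)\subseteq \pi^{a+b}\Mat_g(\O_D)$. Expand
\[
\gamma^p - 1 = p\pi^tX + \sum_{k=2}^{p-1}\binom{p}{k}(\pi^tX)^k + (\pi^tX)^p.
\]
The first term lies in $\pi^{n+t}\Mat_g(\O_D)$ but not in $\pi^{n+t+1}\Mat_g(\O_D)$, because $p=\pi^n$ times a unit and $X\notin\pi\Mat_g(\O_D)$. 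The middle terms lie in $\pi^{n+2t}\Mat_g(\O_D)\subseteq \pi^{n+t+1}\Mat_g(\O_D)$. The last term lies in $\pi^{pt}\Mat_g(\O_D)$.

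The key inequality is $pt > n+t$, i.e.\ $t(p-1) > n$. This holds since $t \geq s\geq \lfloor\frac{n}{p-1}\rfloor+1 > \frac{n}{p-1}$. Hence $pt\geq n+t+1$, and therefore $\gamma^p-1 \equiv p\pi^tX \not\equiv 0 \pmod{\pi^{n+t+1}\Mat_g(\O_D)}$, so $\gamma^p\neq 1$.

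The main obstacle is the step where the lemma differs from $g=1$: noncommutativity and zero divisors in $\Mat_g(\O_D)$. In the case $g=1$ one could compute the exact valuation of $(\pi^s\xi)^p$. For matrices, $X^p$ may even vanish. For this reason the argument deliberately uses only upper bounds on the higher-order terms, which hold because the filtration is multiplicative. It never needs the exact valuation of $(\pi^tX)^p$; this is why the bound takes the form $\lfloor \frac{n}{p-1}\rfloor+1$ rather than allowing the boundary case $t(p-1)=n$. The lower bound needed for the leading term $pX$ holds because multiplication by $p$ is central and scales the filtration exactly by $n$ steps, which I would note explicitly.
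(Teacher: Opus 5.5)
Your proposal is correct and is essentially the paper's argument: reduce to elements of order $p$, write $\gamma = 1+\pi^t X$ with $X\notin\pi\Mat_g(\O_D)$, and note that once $(p-1)t>n$ the higher-order terms of $(1+\pi^tX)^p-1$ lie in $\pi^{\min(n+2t,\,pt)}\Mat_g(\O_D)\subseteq\pi^{n+t+1}\Mat_g(\O_D)$, so they cannot cancel $p\pi^tX$. Compared with the paper, you replace its case analysis of the exponent $e(p,s)$ by the single inequality $pt\geq n+t+1$, and you state the pro-$p$ reduction for $V_s$, which is correct, whereas the paper's proof attributes it to $\GL_g(\O_D)$, which is not pro-$p$.
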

\begin{proof}
  Since $\GL_g(\O_D)$ is a pro-$p$-group, it again suffices via induction to treat the case of $p$-torsion. Assume $1+\pi^s\xi \in V_{p,s}\setminus V_{p,s+1}$, i.e.~$\xi \in \Mat_g(\O_D)$ does not lie in the two-sided ideal $(\pi)$ of $\Mat_g(\O_D)$ generated by $\pi$. We again consider the expression \eqref{eq p-power expansion}, but this time we cannot directly argue with valuations. Instead, we want to find the maximal $e =e(p,s) \geq 0$ such that $\pi^e$ divides
    \[(1+\pi^s\xi)^p-1-p\pi^s\xi = \sum_{i=2}^p \binom{p}{i} (\pi^s\xi)^i\]
    for all $\xi \notin (\pi)$. If $p =2$, this is $e(2,s) = 2s$ and for $p \geq 3$, it is $e(p,s) = \min \{n+2s,ps\}$. We claim that, for all $s \geq \lfloor \frac{n}{p-1}\rfloor+1$, we have $e(p,s) > n+s$. Assuming this, then for all such $s$
    \[\xi \notin (\pi) \overset{e(p,s) > n+s}{\implies} p\pi^s\xi \not\equiv 0 \operatorname{mod} \pi^{e(p,s)} \implies (1+\pi^s\xi)^p \neq 1 \implies V_s \text{ is torsion-free.}\]

    We prove the claim by case distinction. If $p=2$, then $e(2,s) = 2s > n+s$ if and only if $s \geq n+1 = \lfloor\frac{n}{p-1}\rfloor + 1$. If $p \geq 3$ and $n \leq (p-2)s$, then $e(p,s) = n+2s$ and $e(p,s) > n+s$ holds for any $s \geq 1$. If $p \geq 3$ and $n > (p-2)s$, then $e(p,s) = ps$ and $e(p,s) > n+s$ holds if and only if $(p-2)s < n < (p-1)s$ if and only if $s \in (\frac{n}{p-1},\frac{n}{p-2})$. This proves the claim.
\end{proof}

The following proposition singles out those cases in which the congruence subgroup $V_N$ for $N= gm(n-m)-n+1$ might fail to be torsion-free.

\begin{proposition}\label{Proposition: V_N torsion-free}
    Assume that $n \geq 3$, then the congruence subgroup $V_N$ is torsion-free unless one of the following conditions is satisfied.
    \begin{enumerate}
        \item $(g,p)=(2,2)$ and $m\in \{1,n-1\}$.
        \item $g=1$ and $m\in \{1,n-1\}$
        \item $(g,p)=(1,2)$ and either $m \in \{2,n-2\}$ or $n=7$ and $m \in \{3,4\}$ or $n=8$ and $m \in \{3,5\}$.
    \end{enumerate}
    Furthermore, in Case (3) the torsion elements of $V_N$ are precisely $\{\pm 1\}$.
\end{proposition}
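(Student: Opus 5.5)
The plan is a case analysis that compares $N=gm(n-m)-n+1$ with the torsion thresholds of Lemmas~\ref{Lemma: torsion g=1}, \ref{Lemma: special case p=2} and \ref{Lemma: torsion g geq 2}. Two facts will be used throughout: $\gcd(m,n)=1$, and $m(n-m)\geq n-1$, with $m(n-m)\geq 2(n-2)$ as soon as $m\notin\{1,n-1\}$. I also use the inclusion $V_N\subseteq V_s$ whenever $N\geq s$.

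\textbf{Case $g\geq 2$.} By Lemma~\ref{Lemma: torsion g geq 2} it suffices to show $N\geq \lfloor n/(p-1)\rfloor+1$.
\begin{itemize}
\item For $p\geq 3$ the threshold is at most $\lfloor n/2\rfloor+1$. Since $N\geq 2(n-1)-n+1=n-1\geq \lfloor n/2\rfloor+1$ for $n\geq 3$, we are done.
\item For $p=2$ we need $N\geq n+1$. If $g\geq 3$, then $N\geq 2n-2\geq n+1$.
\item For $p=2$ and $g=2$, the choice $m\in\{1,n-1\}$ gives $N=n-1$; this is exception (1). Otherwise $N\geq 4(n-2)-n+1=3n-7\geq n+1$ for $n\geq 4$. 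For $n=3$ we automatically have $m\in\{1,2\}$, which is again exception (1).
\end{itemize}

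\textbf{Case $g=1$.} Here $N=(m-1)(n-m-1)$, which vanishes exactly for $m\in\{1,n-1\}$; this gives exception (2). For $m\notin\{1,n-1\}$ I use Lemma~\ref{Lemma: torsion g=1} in its sharp form: nontrivial torsion of $V_1$ can only live in $V_{n/(p-1)}\setminus V_{n/(p-1)+1}$, and only if $p-1\mid n$. So it suffices to show that $p-1\nmid n$ or $N>n/(p-1)$.
\begin{itemize}
\item For $p\geq 5$: the case $n=3$ forces $m\in\{1,2\}$, and $n=4$ forces $m\in\{1,3\}$, both excluded. For $n\geq 5$ we have $N\geq n-3>n/4\geq n/(p-1)$.
\item For $p=3$: only even $n$ matter, and then $m$ is odd. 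Hence $m,n-m\geq 3$, which forces $n\geq 8$, and $N\geq 2(n-4)>n/2$. Here $N$ as a function of $m$ is minimised at the endpoints $m=3$ or $m=n-3$.
\item For $p=2$: we need $N>n$. This fails for $m\in\{2,n-2\}$, where $N=n-3$, and for $m\in\{3,n-3\}$ with $n\leq 8$, where $N=2(n-4)$; with coprimality these are exactly the pairs listed in (3). For $4\leq m\leq n-4$ we get $N\geq 3(n-5)>n$, since $n\geq 8$.
\end{itemize}

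\textbf{Torsion in case (3).} I use Lemma~\ref{Lemma: special case p=2}: the $2$-torsion of $V_1$ is exactly $\{\pm1\}$, and $-1=1+\pi^n\cdot(-1)$ lies in $V_N$ because $N\leq n$ in all exceptional cases. Primitive $4$-torsion can only lie in $V_{n/2}\setminus V_{n/2+1}$, which requires $n$ even. In case (3) with $n$ even, that is $n=8$, $m\in\{3,5\}$, $N=8>4$, or $m\in\{2,n-2\}$, which is impossible for even $n$ by coprimality. So there is no $4$-torsion in $V_N$. Since an element of order $2^k$ with $k\geq 3$ would have a power of order exactly $4$, the torsion of $V_N$ is exactly $\{\pm1\}$.

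The main obstacle is bookkeeping: using the sharp "exact level" form of Lemma~\ref{Lemma: torsion g=1} rather than a crude bound, and invoking coprimality of $m$ and $n$ to rule out small spurious cases such as $(n,m)=(4,2)$ for $p=3$.
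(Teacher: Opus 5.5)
Your proposal is correct and follows essentially the paper's route. Like the paper, you reduce via Lemmas \ref{Lemma: torsion g=1} and \ref{Lemma: torsion g geq 2} to comparing $N$ with $\lfloor n/(p-1)\rfloor+1$ (or just $N\geq 1$ when $g=1$ and $p-1\nmid n$), split by $g$ and $p$, and settle Case (3) using Lemma \ref{Lemma: special case p=2} and $N\leq n$. The differences are cosmetic: your closed form $N=(m-1)(n-m-1)$ for $g=1$ and your uniform bound $N\geq n-1$ for $g\geq 2$, $p\geq 3$ replace the paper's by-hand checks of small $n$ and $p\in\{3,5,7\}$. One wording fix: your ``sharp form'' of Lemma \ref{Lemma: torsion g=1} should speak of $p$-torsion rather than all nontrivial torsion, which still suffices since powers of elements of $V_N$ stay in $V_N$.
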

\begin{proof}
    By Lemmas \ref{Lemma: torsion g=1} and \ref{Lemma: torsion g geq 2}, $V_N$ is torsion-free once the inequality
    \begin{equation}\label{eq torsion inequality}
        gm(n-m) \geq n+\lfloor\frac{n}{p-1}\rfloor
    \end{equation}
    is satisfied. If $g=1$ and $p-1 \nmid n$, it suffices to consider the inequality $m(n-m) \geq n$ instead, which holds if and only if $m \notin \{1,n-1\}$.

    Note that \eqref{eq torsion inequality} holds regardless of $p$, $n$ and $m$ if $g \geq 3$. Indeed, in this case already $gm(n-m) \geq 2n$ because we assumed $n \geq 3$. Thus we assume that $g \leq 2$.

    First assume $g =2$ and $p \geq 3$. For $n \geq 4$,  \eqref{eq torsion inequality} holds because the stronger inequality
    \begin{equation*}
        2m(n-m) \geq \frac{3n}{2}
    \end{equation*}
    is satisfied. If $n=3$ and without loss of generality $m=1$, one checks by hand that \eqref{eq torsion inequality} is also satisfied. If on the other hand $p=2$, \eqref{eq torsion inequality} turns into $m(n-m) \geq n$, which holds whenever $m \notin \{1,n-1\}$.

    Now assume $g =1$. If $p-1 \nmid n$, then by Lemma \ref{Lemma: torsion g=1} we may instead of \eqref{eq torsion inequality} consider the inequality $m(n-m) -n \geq 0$, which holds whenever $m \notin \{1,n-1\}$. So assume that $p-1 \mid n$, in which case \eqref{eq torsion inequality} turns into
    \begin{equation}\label{eq torsion inequality g=1}
        m(n-m) \geq n+ \frac{n}{p-1}.
    \end{equation}
    We may without loss of generality assume $m \notin \{1,n-1\}$. The possibly stronger inequality $m(n-m) \geq 2n$ holds whenenever $n \geq 9$, given our assumptions on $m$ and $n$. This already treats all the cases listed above for $p=2$, so we are left to show that, for $n \leq 8$, $p \in \{3,5,7\}$ and all the other assumptions on $n$ and $m$ that we have already collected, inequality \eqref{eq torsion inequality g=1} holds.
    
    If $p=3$, the only possible combination left is $n=8$ and $m=3$, in which case the inequality $3\cdot 5 \geq 8+4$ holds. If $p=5$, the only possible combination left is also $n=8$ and $m=3$, in which case the inequality $3 \cdot 5 \geq 8+2$ also holds. If $p=7$, we would necessarily have $n = 6$, but then there is no $m \notin \{1,5\}$ that is coprime to $n$. This finishes the proof of the case distinction.

    Now assume we are in Case (3). By Lemma \ref{Lemma: special case p=2}, the $2$-torsion of $V_1$ lies in $V_n$ and consists of $\{\pm 1\}$. In Case (3), we have $N \leq n$ so that $V_n \subset V_N$, i.e. the $2$-torsion of $V_N$ is $\{\pm 1\}$. We claim that $V_N$ does not contain any primitive $4$-torsion. Indeed, by Lemma  \ref{Lemma: special case p=2} all primitive $4$-torsion lies in $V_{\frac{n}{2}}$ if $n$ is even, and otherwise there is none. Case (3) implies that $n \geq 5$, and for $n=5$ and $n=7$ there is no primitive $4$-torsion. If $n=6$, we are necessarily in Case (2). For $n=8$, the inequality $3\cdot5-8 \geq 4$ holds and by Lemma \ref{Lemma: special case p=2} there is no primitive $4$-torsion in $V_N$. Similarly for $n \geq 9$ and without loss of generality $m =2$ the inequality $2(n-2)-n = 2n-4 \geq \frac{n}{2}$ holds. This concludes the proof.
\end{proof}

\begin{lemma}\label{Lemma: torsion in 1+pi^N-1 O_D}
    Assume that $n \geq 3$ and $n \mid g$. Then the torsion of the congruence subgroup $1+\pi^{N-1}\O_D \subset \GL_g(\O_D)$ is contained in $\{\pm 1\}$, and is in fact trivial unless $(g,p)=(3,2)$ and $n=3$.
\end{lemma}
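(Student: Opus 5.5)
The group $1+\pi^{N-1}\O_D$ sits inside the scalar-diagonal copy $\O_D\subset\Mat_g(\O_D)$. So the plan is to reduce to the case $g=1$ and apply Lemmas~\ref{Lemma: torsion g=1} and \ref{Lemma: special case p=2}, working inside $D$ with its valuation. Concretely, an element $1+\pi^{N-1}\xi$ with $\xi\in\O_D$ is torsion in $\GL_g(\O_D)$ if and only if it is torsion in $\O_D^\times$. Since $1+\pi\O_D$ is pro-$p$, any torsion is $p$-power torsion, and it is enough to understand $p$-torsion and, for $p=2$, primitive $4$-torsion.

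Next we bound $N-1$ from below. Here $N-1=gm(n-m)-n$ and $n\mid g$, so $g\geq n\geq 3$. Since $1\le m\le n-1$, we have $m(n-m)\geq n-1$, hence $N-1\geq n(n-1)-n=n(n-2)$.

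For $p\geq 3$, Lemma~\ref{Lemma: torsion g=1} says all $p$-torsion in $1+\pi\O_D$ lies in $V_{n/(p-1)}\setminus V_{n/(p-1)+1}$ (and there is none if $p-1\nmid n$). So torsion in $1+\pi^{N-1}\O_D$ would require $N-1\leq n/(p-1)\leq n/2$. This is impossible, because $n(n-2)\geq n>n/2$ for $n\geq3$. Hence the group is torsion-free.

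For $p=2$, Lemma~\ref{Lemma: special case p=2} gives that the $2$-torsion of $1+\pi\O_D$ is $\{\pm1\}$ and that primitive $4$-torsion lies in $V_{n/2}\setminus V_{n/2+1}$. Since $N-1\geq n(n-2)\geq n/2+1$, there is no primitive $4$-torsion, so the torsion is contained in $\{\pm1\}$. It remains to decide when $-1=1-2=1-\pi^n$ lies in $1+\pi^{N-1}\O_D$; this happens if and only if $N-1\leq n$, i.e.\ $gm(n-m)\leq 2n$.

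This last inequality is the main place where care is needed. Using $g\geq n$ and $m(n-m)\geq n-1$, it forces $n(n-1)\leq 2n$, so $n\leq 3$, hence $n=3$. Then $m(n-m)=2$, and $2g\le 6$ together with $3\mid g$ gives $g=3$. Thus the torsion is nontrivial exactly when $(g,p)=(3,2)$ and $n=3$, where it equals $\{\pm1\}$ since $N-1=3=n$. Otherwise $-1\notin 1+\pi^{N-1}\O_D$ and the torsion is trivial. The one point to check is that Lemmas~\ref{Lemma: torsion g=1} and \ref{Lemma: special case p=2}, stated for $V_s$ with $g=1$, transfer verbatim through the scalar embedding $\O_D\hookrightarrow\Mat_g(\O_D)$; this holds because the embedding is an injective ring homomorphism.
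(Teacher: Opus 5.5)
Your proof is correct and is essentially the paper's argument: view $1+\pi^{N-1}\O_D$ inside $\O_D^\times$, bound $N-1$ from below, and apply Lemmas \ref{Lemma: torsion g=1} and \ref{Lemma: special case p=2}. The only difference is in how the cases are split. You split by $p$ and use $g\geq n$ to get the uniform bound $N-1\geq n(n-2)$. The paper instead uses $g\geq 3$ to dispose of $n\geq 4$ and then treats $n=g=3$ (hence $p=2$) separately. Your version also makes explicit the case $n=3$, $g\geq 6$, and shows that the torsion is exactly $\{\pm1\}$ in the exceptional case.
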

\begin{proof}
    By our assumptions $g \geq 3$. By Lemma \ref{Lemma: torsion g=1}, $1+\pi^{N-1}\O_D$ is always torsion-free if $p-1 \nmid n$, and if $p-1 \mid n$ it is torsion-free once $N-1 = gm(n-m)-n \geq \frac{n}{p-1}+1$. Because $g \geq 3$, this inequality is satisfied whenever $n \geq 4$, so let us assume that $n=g=3$, and hence without loss of generality $m=1$ and $p=2$. In this case, we may use Lemma \ref{Lemma: special case p=2} to see that $1+\pi^3\O_D$ contains all $2$-torsion, but no primitive $4$-torsion since all of the latter lies in $1+\pi^2\O_D$.
\end{proof}

\begin{proposition}\label{Proposition: generic torsion in p-div automorphism}
    Unless in Cases (1) or (2) of Proposition \ref{Proposition: V_N torsion-free}, the torsion of the automorphism group of the universal $p$-divisible group on the Rapoport-Zink space $\M$ is generically contained in $\Z_p^\times$.
\end{proposition}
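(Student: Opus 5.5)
By Theorem~\ref{Theorem: generic automorphism group} and the transitivity of the $J_{b_0}(\Q_p)$-action on irreducible components of $\M^\circ$, it suffices to show that every torsion element of $\Gamma_{\gen}$ lies in $\Z_p^\times$. Indeed, for $M\in Y_{\Lambda_0}(\overline{\F}_p)$ we have $\Aut(M,F)=\Gamma_{\gen}$. Translating $Y_{\Lambda_0}$ by $J_{b_0}(\Q_p)$ gives a dense open subset of $\M^\circ$, hence of $\M$, on which the automorphism group is conjugate to $\Gamma_{\gen}$. This conjugation fixes the central subgroup $\Z_p^\times$.

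Let $\gamma\in\Gamma_{\gen}$ be torsion. First suppose $n\nmid g$, so $\Gamma_{\gen}=\Z_p^\times\cdot V_N$. Write $\gamma=\lambda u$ with $\lambda\in\Z_p^\times$ and $u\in V_N$. Since $\lambda$ is central and $\gamma$ has finite order, some power $u^k=\lambda^{-k}\gamma^k$ lies in $\Z_p^\times$. To make $u$ itself torsion, I would use the decomposition $\Z_p^\times=\mu\times(1+p\Z_p)$ (with $\mu$ the roots of unity, and $\{\pm1\}\times(1+4\Z_2)$ when $p=2$).

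\textbf{Reduction to $V_N$.} First, $\gamma$ reduces mod $\pi$ to a scalar in $\F_p^\times$. Its prime-to-$p$ part is then a root of unity in $\Z_p^\times$, namely the Teichm\"uller lift of that scalar. Dividing by it, we may assume $\gamma$ has $p$-power order and $\lambda\in 1+p\Z_p$, or $\lambda\in\{\pm1\}(1+4\Z_2)$ when $p=2$. The roots of unity in $\Z_p^\times$ are harmless, so we may also absorb them and take $\lambda$ in $1+p\Z_p$ (resp.\ $1+4\Z_2$). In that subgroup $p$-th roots (resp.\ square roots) are unique, and $1+p^k\Z_p\subset V_N$ for $k$ large. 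The torsion of $\gamma$ forces $u^{p^r}=\lambda^{-p^r}\in\Z_p^\times\cap V_N$. I would then extract roots of $\lambda^{p^r}$ inside the commutative group generated by $u$ and $\Z_p^\times\cap V_N$. This produces $\lambda'\in\Z_p^\times$ with $u\lambda'^{-1}$ torsion in $V_N$, by uniqueness of roots in the torsion-free group $1+p\Z_p$. Proposition~\ref{Proposition: V_N torsion-free} then shows that outside Cases (1) and (2), the torsion in $V_N$ is trivial, or equal to $\{\pm1\}$ in Case (3). Hence $u\in\Z_p^\times$, and so $\gamma\in\Z_p^\times$.

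\textbf{The case $n\mid g$.} Here $\Gamma_{\gen}=(\Z_p^\times+\pi^{N-1}\O_D)\cdot V_N$, and $g\ge n\ge3$, so $V_N$ is torsion-free. The image of $\Gamma_{\gen}$ in $\GL_g(\O_D/\pi^N)$ is the commutative group $[\Z_p^\times+\pi^{N-1}\O_D]_N$. Running the same argument, a torsion $\gamma$ has $p$-power order after removing the Teichm\"uller part. Modulo $V_N$ it lies in $\Z_p^\times(1+\pi^{N-1}\O_D)$, and I want to show it actually lies in $\Z_p^\times$ times a torsion element of $1+\pi^{N-1}\O_D$. Lemma~\ref{Lemma: torsion in 1+pi^N-1 O_D} then gives that such a torsion element lies in $\{\pm1\}\subset\Z_p^\times$.

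\textbf{Main obstacle.} The delicate step is lifting torsion from the quotient. For this, the plan uses that $V_N$ is torsion-free and that $p$-power maps are injective on the relevant pro-$p$ groups. I expect this to be handled by comparing $\gamma$ with the torsion element of $(\Z_p^\times+\pi^{N-1}\O_D)$ obtained by raising to suitable $p$-powers and extracting roots. Failing that, I would argue directly: since $N$ is large relative to $n$, the binomial expansion of $\gamma^{p}$ computed as in Lemma~\ref{Lemma: torsion g geq 2} forces the $V_N$-component to vanish.
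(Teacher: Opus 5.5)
Your reduction to torsion in $\Gamma_{\gen}$ and the tools you plan to use (Proposition~\ref{Proposition: V_N torsion-free}, and Lemma~\ref{Lemma: torsion in 1+pi^N-1 O_D} when $n\mid g$) are the paper's. You also correctly locate the crux: passing from the quotient $\Gamma_{\gen}/V_N$ back to $\Gamma_{\gen}$, a step the paper's exact-sequence argument also treats only briefly. But the root extraction you propose does not accomplish it.

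Take $p$ odd and put $c=\lceil N/n\rceil$, so that $\Z_p^\times\cap V_N=1+p^c\Z_p$. For $u\lambda'^{-1}$ to lie in $V_N$ you need $\lambda'\in 1+p^c\Z_p$. For $u\lambda'^{-1}=\gamma(\lambda\lambda')^{-1}$ to be torsion you need $\lambda\lambda'$ torsion in $1+p\Z_p$, i.e.\ $\lambda'=\lambda^{-1}$. So the desired $\lambda'$ exists only if $\lambda\in 1+p^c\Z_p$, i.e.\ only if $\gamma\in V_N$ already, which is what must be proved. From $u^{p^r}=\lambda^{-p^r}$ you merely get $\lambda\in 1+p^{c-r}\Z_p$, and uniqueness of roots in $1+p\Z_p$ says nothing about the non-scalar $u$.

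In fact no argument using only that $V_N$ is torsion-free, that $\Z_p^\times$ is central and that $\Z_p^\times\cap V_N=1+p^c\Z_p$ can work. In $A=\Z_p^\times\times\Z/p$ with $p$ odd, the subgroup $H$ generated by $((1+p)^{-1},\bar 1)$ and $1+p^2\Z_p$ is torsion-free, meets $\Z_p^\times$ in $1+p^2\Z_p$, and satisfies $A=\Z_p^\times H$. Yet $(1,\bar 1)=(1+p)\cdot((1+p)^{-1},\bar 1)$ is a torsion element outside $\Z_p^\times$.

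Your $n\mid g$ paragraph rests on the same unproved lifting claim. Lemma~\ref{Lemma: torsion in 1+pi^N-1 O_D} only concerns the scalar subgroup $1+\pi^{N-1}\O_D$. You would first have to show that $\gamma$ lies in $\Z_p^\times(1+\pi^{N-1}\O_D)$ rather than merely in $\Z_p^\times(1+\pi^{N-1}\O_D)V_N$.

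The missing idea is to apply the congruence-subgroup lemmas to $\gamma$ itself, in a subgroup larger than $V_N$. Your first move (dividing by the Teichm\"uller lift of $\gamma\bmod\pi$, and by $-1$ if $p=2$) is the right start. Afterwards the scalar part of $\gamma$ lies in $1+p\Z_p\subseteq V_n$, resp.\ $1+4\Z_2\subseteq V_{2n}$. So $\gamma\in V_s$ with $s=\min(n,N)$ for $p$ odd and $s=\min(2n,N)$ for $p=2$. If $n\mid g$, replace $N$ by $N-1$ and note that $N-1\geq n(n-2)\geq n$.

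By Lemmas~\ref{Lemma: torsion g=1} and~\ref{Lemma: torsion g geq 2}, $V_s$ is torsion-free as soon as $s\geq\lfloor n/(p-1)\rfloor+1$. This bound is at most $n$ for $p$ odd and equals $n+1\leq 2n$ for $p=2$. Combine this with Proposition~\ref{Proposition: V_N torsion-free} at level $N$, including Case (3), where the torsion of $V_N$ is $\{\pm1\}$. This gives $\gamma\in\{\pm1\}$, hence the original element lies in $\Z_p^\times$, in every case except $p=2$, $g=n=3$, where $N-1=n$.

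In that remaining case, use $2=\pi^3$ and write $\gamma=1+2\xi+\pi^4x$ with $\xi\in\O_D$ scalar and $x\in\Mat_3(\O_D)$. Then $\gamma^2\equiv 1+\pi^6(\xi+\xi^2)\bmod\pi^7$. Since $\gamma^2$ is a torsion element of the torsion-free group $V_6$, we get $\gamma^2=1$ and $\xi\equiv 0$ or $1\bmod\pi$. Thus $\gamma\in V_4$ or $-\gamma\in V_4$, and $\gamma=\pm1$.

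This also corrects your fallback. The required depth does not come from $N$ being large compared to $n$; in Case (3) one has $N\leq n$. It comes from the scalar part of $\gamma$ already lying in $V_n$, resp.\ $V_{2n}$.
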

\begin{proof}
    By all our reduction steps from Section \ref{Section: RZ space}, it suffices to show that generically on $\mathcal C_{\Lambda_0}$, the torsion is as claimed. By Theorem \ref{Theorem: generic automorphism group}, there is a dense open subscheme $Y_{\Lambda_0} \subset \mathcal C_{\Lambda_0}$ such that the automorphism group of the universal $p$-divisible group at each closed point is $\Gamma_{\gen} = \Z_p^\times + p^{\lceil\frac{N-1}{n}\rceil}\O_D + \pi^N\Mat_g(\O_D)$. Let $\gamma \in \Gamma_{\gen}$ be a torsion element. By multiplying with a suitable element of $\Z_p^\times$, we may assume $\gamma \in 1+p^{\lceil \frac{N-1}{n}\rceil}\O_D + \Mat_g(\O_D)$. Using the exact sequence
    \[0 \longrightarrow 1+\pi^N\Mat_g(\mathcal O_D) \longrightarrow \Gamma_{\gen} \longrightarrow [\Z_p^\times + p^{\lceil\frac{N-1}{n}\rceil}\O_D]_N \longrightarrow 0\]
    and Proposition \ref{Proposition: V_N torsion-free}, we find that the torsion subgroup of $\Gamma_{\gen}$ is contained in $\Z_p^\times$ if $n \nmid g$, and otherwise it is contained in $\Z_p^\times + p^{\lceil \frac{N-1}{n}\rceil}\O_D$. In the second case, we consider the exact sequence
    \[0 \longrightarrow 1+\pi^{N-1}\mathcal O_D \longrightarrow \Z_p^\times + \pi^{N-1}\O_D \longrightarrow \Z_p^\times \longrightarrow 0\]
    and use Lemma \ref{Lemma: torsion in 1+pi^N-1 O_D} to see that all torsion in $\Z_p^\times + \pi^{N-1}\O_D$ is contained in $\Z_p^\times$.
\end{proof}

By Lemma \ref{Lemma: comparison automorphism groups}, Proposition \ref{Proposition: generic torsion in p-div automorphism} also determines the generic torsion of the automorphism group of the universal $p$-divisible group over $\M_G$. This finally allows us to prove Theorem \ref{thmmain1}.

\begin{proof}[Proof of Theorem \ref{thmmain1}]
    Let $(A,i,\lambda)$ be a $k$-valued point of $\cS$, where $k$ is any field of characteristic $p$. Because $A$ and all of its endomorphisms are already defined over a subfield that is finitely generated over $\F_p$, by \cite[Theorem 2.6]{deJong1998Homomorphisms} there is an injection $\End_k(A) \otimes_\Z \Z_p \hookrightarrow \End_k(A[p^\infty])$. Due to the endomorphism structure on $A$, there is the following commutative diagram:
\[\begin{tikzcd}
	{\End_k(A)} & {\End_k(A)\otimes_\Z \Z_p} & {\End_k(A[p^\infty])} \\
	{\O_L} & {\O_L\otimes_\Z \Z_p} & {\O_L \otimes_\Z \Z_p}
	\arrow[hook, from=1-1, to=1-2]
	\arrow[hook, from=1-2, to=1-3]
	\arrow[hook, from=2-1, to=1-1]
	\arrow[hook, from=2-1, to=2-2]
	\arrow[hook, from=2-2, to=1-2]
	\arrow["{=}", from=2-2, to=2-3]
	\arrow[hook, from=2-3, to=1-3]
\end{tikzcd}\]
    Let $f \in \End_k(A)$ such that $f[p^\infty] \in \O_L \otimes_\Z \Z_p$. We see that first of all $f \otimes 1 \in \O_L \otimes_\Z \Z_p$ as an element of $\End_k(A) \otimes_\Z \Z_p$, by injectivity of the upper right map. Furthermore both $\End_k(A)$ and $\O_L$ are finite free $\Z$-modules of rank $2\dim(A)$ resp. $2$, showing that necessarily $f \in \O_L$.

    Note that our assumptions in Theorem \ref{thmmain1} are precisely the assumptions in Proposition \ref{Proposition: generic torsion in p-div automorphism}. Thus we can use Proposition \ref{Proposition: generic torsion in p-div automorphism} and Lemma \ref{Lemma: comparison automorphism groups} to conclude that there is an open dense subspace of $\M_G$ where the torsion subgroup of the automorphism group of the universal $p$-divisible group is contained in $(\Z_p \times \Z_p)^\times = (\O_L \otimes_\Z \Z_p)^\times$. Via $p$-adic uniformization (see Section \ref{subsection: p-adic uniformization}) we find an open dense subscheme $Y_{b_0}$ of $\cS_{b_0}$ such that for every $x \in Y_{b_0}(\overline{\F}_p)$
    \[\Aut(A_x,i_x,\lambda_x) \hookrightarrow \Aut(A_x[p^\infty],i_x[p^\infty],\lambda_x[p^\infty])_{\tors} \subseteq (\Z_p \times \Z_p)^\times = (\O_L \otimes_\Z \Z_p)^\times,\]
    By the above discussion, this implies that $\Aut(A_x,i_x,\lambda_x) \subseteq \O_L^\times$ for all $x \in Y_{b_0}(\overline{\F}_p)$.

    Recall that $\O_L^1$ denotes the subgroup of elements $\alpha \in \O_L^\times$ such that $\Norm(\alpha) = \alpha^*\alpha = 1$. Any $\alpha \in \O_L^\times$ such that $i_x(\alpha)$ commutes with $\lambda_x$ necessarily satisfies $\Norm(\alpha)=1$. On the other hand, every $\alpha \in \O_L^1$ defines an automorphism $i(\alpha) \in \Aut(A_x,i_x,\lambda_x)$. Hence for every $x \in Y_{b_0}(\overline{\F}_p)$
    \[\Aut(A_x,i_x,\lambda_x) = \O_L^1,\]
    which finishes the proof.
\end{proof}

\begin{example}\label{example: exceptional cases}
    Since $L$ is imaginary quadratic, by Dirichlet's unit theorem $\O_L^\times = \mu_L$, where $\mu_L$ denotes the roots of unity in $L$. The only two exceptional cases where there are more roots of unity than just $\{\pm 1\}$ are precisely the ones where $\O_L^1 \neq \{\pm 1\}$, namely $L= \Q(\zeta_4)$ and $L=\Q(\zeta_6)$.
    
    Note that in any cyclotomic extension $\Q(\zeta_n)$, a prime $p$ splits completely if and only if $p \equiv 1 \mod n$. Thus in our two exceptional cases, the prime $p$ being split implies that $\Z_p^\times$ contains all $4$-th resp. $6$-th roots of unity. By Lemma \ref{Lemma: comparison automorphism groups}, the same is true for the corresponding subgroup of $\Aut_G(X)$ for any $p$-divisible group $X$ with $G$-structure. In all other cases, $\Z_p^\times$ of course still contains all $(p-1)$-th roots of unity, but in $(\O_L \otimes \Z_p)^\times$ they are purely of $p$-adic nature and thus don't contribute to automorphisms of abelian varieties. 
    
    Also note that under the identification $(\O_L \otimes \Z_p)^\times = \Z_p^\times \times \Z_p^\times$, $\zeta_4$ resp. $\zeta_6$ indeed gets sent into the subgroup
    \[\{(a,a^{-1})\mid a \in \Z_p^\times\}.\]
\end{example}

\begin{remark}\label{remark: remaining cases}
    If the signature $(m_0,n_0-m_0)$ is not as in the statement of Theorem \ref{thmmain1}, we expect the generic automorphism group of the universal abelian variety over the basic locus to be larger than $\O_L^1$. On the other hand, in Case (1) of Proposition \ref{Proposition: V_N torsion-free}, we have $N= n-1$, thus the only torsion appearing in $V_N$ lies in $V_n$ and needs to be $2$-torsion. Thus any of its finite subgroups is isomorphic to $(\Z/2\Z)^k$ for some $k \geq 1$.
\end{remark}

\bibliographystyle{alpha}
\bibliography{bibliography.bib}

\end{document}

%% file: preamble.tex
\usepackage{amsmath, amssymb, amsthm, mathrsfs} 
\usepackage{tikz-cd}
\usepackage[a4paper, total={6in, 8in}]{geometry}
\usepackage{stmaryrd}
\usepackage{xcolor}
\usepackage{dsfont}
\usepackage{hyperref}

\newtheorem{theorem}{Theorem}[section]
\newtheorem{lemma}[theorem]{Lemma}
\newtheorem{proposition}[theorem]{Proposition}

\newtheorem{notation}[theorem]{Notation}
\newtheorem*{theorem*}{Theorem}

\theoremstyle{definition}
\newtheorem{definition}[theorem]{Definition}
\newtheorem{example}[theorem]{Example}

\theoremstyle{remark}
\newtheorem{remark}[theorem]{Remark}

\def\A              {\mathbb{A}}

\def\cA             {\mathcal{A}}
\def\ccA            {\mathscr{A}}

\def\C              {\mathbb{C}}
\def\D              {\mathcal{D}}

\def\F              {\mathbb{F}}

\def\G              {\mathbb{G}}

\def\cH             {\mathcal{H}}

\def\Q              {\mathbb{Q}}

\def\R              {\mathbb{R}}

\def\cS             {\mathcal{S}}

\def\Z              {\mathbb{Z}}
\def\O              {\mathcal{O}}
\def\M              {\mathcal{M}}

\def\X              {\mathbb{X}}

\def\iso            {\cong}

\def\Aut            {\operatorname{Aut}}

\def\Hom            {\operatorname{Hom}}

\def\diag           {\operatorname{diag}}

\def\End            {\operatorname{End}}

\def\Gal            {\operatorname{Gal}}
\def\gen            {\operatorname{gen}}
\def\GL             {\operatorname{GL}}

\def\GSp            {\operatorname{GSp}}
\def\GU             {\operatorname{GU}}
\def\id             {\operatorname{id}}

\def\ker            {\operatorname{ker}}

\def\Lie            {\operatorname{Lie}}

\def\Mat            {\operatorname{Mat}}

\def\Norm           {\operatorname{Nm}}

\def\opp            {\operatorname{op}}

\def\Sh             {\operatorname{Sh}}

\def\Spf            {\operatorname{Spf}}
\def\Spec           {\operatorname{Spec}}
\def\swap           {\operatorname{swap}}

\def\Tr             {\operatorname{Tr}}
\def\tors           {\operatorname{tors}}